\title[Relative augmented base loci]
{Relative augmented base loci on noetherian schemes} 
\author{Yusuke Ushiro} 
\subjclass[2020]{14A15, 14E30.}
\keywords{augmented base locus, exceptional locus}
\address{Graduate School of Mathematical Sciences, The University of Tokyo, 3-8-1 Komaba, Meguro-ku, Tokyo 153-8914, JAPAN} 
\email{ushiro-yusuke8100@g.ecc.u-tokyo.ac.jp}
\newcommand{\red}[0]{{\operatorname{red}}}
\newcommand{\Ker}[0]{{\operatorname{Ker}}}
\newcommand{\Coker}[0]{{\operatorname{Coker}}}
\newcommand{\Spec}[0]{{\operatorname{Spec}}}
\newcommand{\Bs}[0]{{\operatorname{Bs}}}
\newcommand{\Supp}[0]{{\operatorname{Supp}}}
\newcommand{\Ass}[0]{{\operatorname{Ass}}}
\newcommand{\id}[0]{{\operatorname{id}}}
\newcommand{\transdeg}[0]{{\operatorname{tr}\hspace{-3pt}.\hspace{-2pt}\operatorname{deg}}}
\newcommand{\glen}[0]{{\operatorname{ge}\hspace{-2pt}.\hspace{-2pt}\operatorname{len}}}
\newcommand{\E}[0]{{\mathbf{E}}}
\newcommand{\SB}[0]{{\mathbf{SB}}}
\newcommand{\B}[0]{{\mathbf{B}}}
\renewcommand{\Im}[0]{{\operatorname{Im}}}
\newcommand{\sequence}[3]{{#1_1 #2 #1_2 #2 \dots #2 #1_{#3}}}
\newtheorem{thm}{Theorem}[section]
\newtheorem{lem}[thm]{Lemma}
\newtheorem{cor}[thm]{Corollary}
\newtheorem{prop}[thm]{Proposition}
\newtheorem{claim}[thm]{Claim}
\theoremstyle{definition}
\newtheorem{dfn}[thm]{Definition}
\newtheorem{dfnprop}[thm]{Definition-Proposition}
\newtheorem{rem}[thm]{Remark}
\newtheorem*{ack}{Acknowledgments}
\newcommand{\ME}{\mathcal{E}}
\newcommand{\MF}{\mathcal{F}}
\newcommand{\MG}{\mathcal{G}}
\newcommand{\MI}{\mathcal{I}}
\newcommand{\MN}{\mathcal{N}}
\newcommand{\MJ}{\mathcal{J}}
\newcommand{\MK}{\mathcal{K}}
\newcommand{\MO}{\mathcal{O}}
\newcommand{\Mp}{\mathfrak{p}}
\newcommand{\Mq}{\mathfrak{q}}
\newcommand{\Mm}{\mathfrak{m}}
\newcommand{\R}{\mathbb{R}}
\newcommand{\Q}{\mathbb{Q}}
\newcommand{\Z}{\mathbb{Z}}
\newcommand{\Fp}{\mathbb{F}_p}
\newcommand{\C}{\mathbb{C}}
\begin{document}

\begin{abstract}
	Let $f:X \to S$ be a projective morphism of noetherian schemes
	and let $L$ be an invertible sheaf on $X$.
	We show that 
	the relative augmented base locus of $L$
	coincides with the relative exceptional locus of $L$.
	We also prove a semi-ampleness criterion in terms of the exceptional locus generalizing the result of Keel.
\end{abstract}

\maketitle

\tableofcontents

\setcounter{section}{0}

\section{Introduction}

Let $f:X \to S$ be a projective morphism of noetherian schemes,
let $L$ be an invertible sheaf on $X$,
and let $A$ be an $f$-ample invertible sheaf on $X$.
Then the $f$-stable base locus $\SB_f(L)$ of $L$ is defined as 
\[
\SB_f(L) 
\coloneqq \bigcap_{m \in \Z_{>0}} \Supp \  \Coker(f^* f_* L^{\otimes m} \to L^{\otimes m}),
\]
and the $f$-augmented base locus $\B_{+,f}(L)$ of $L$ is defined as
\[
\B_{+,f}(L) 
\coloneqq \bigcap_{m \in \Z_{>0}} \SB_f(L^{\otimes m} \otimes_{\MO_X} A^{-1})
\]
(cf. Definition~\ref{def:base-locus}).
We define the $f$-exceptional locus $\E_f(L)$ of $L$ 
as the set-theoretic union of all the integral closed subschemes $V$ of $X$
such that $L|_V$ are not $f|_V$-big (cf. Definition~\ref{def:exceptional-locus}).

The main results of this paper are the following two theorems,
Theorem~\ref{intro:Nakamaye} and Theorem~\ref{intro:Keel}.
These are proved in Section~\ref{sec:K}.

\begin{thm}[Theorem~\ref{K-main1}] \label{intro:Nakamaye}
	Let $f:X \to S$ be a projective morphism 
	of noetherian schemes.
	Let $L$ be an $f$-nef invertible sheaf on $X$.
	Then, we have $\B_{+,f}(L)=\E_f(L)$. 
\end{thm}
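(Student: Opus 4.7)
The plan is to prove the two inclusions $\E_f(L) \subseteq \B_{+,f}(L)$ and $\B_{+,f}(L) \subseteq \E_f(L)$ separately; only the second uses the $f$-nefness hypothesis.

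For $\E_f(L) \subseteq \B_{+,f}(L)$, I would argue by contrapositive. If an integral closed subscheme $V \subseteq X$ is not contained in $\B_{+,f}(L)$, then there exist $m, k > 0$ such that $V$ is not contained in the support of the cokernel of $f^*f_*(L^{\otimes m} \otimes A^{-1})^{\otimes k} \to (L^{\otimes m} \otimes A^{-1})^{\otimes k}$. Working locally on $S$, this produces a section of $(L^{\otimes m} \otimes A^{-1})^{\otimes k}$ whose restriction to $V$ is nonzero, so $L^{\otimes mk}|_V \cong A^{\otimes k}|_V \otimes \MO_V(D)$ for an effective Cartier divisor $D$ on $V$. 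Since $A|_V$ is $f|_V$-ample, $L|_V$ is $f|_V$-big, hence $V \not\subseteq \E_f(L)$.

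The main direction is $\B_{+,f}(L) \subseteq \E_f(L)$. The plan is to first reduce to the affine case $S = \Spec R$ using that both loci are compatible with restriction to open subschemes of $S$, and then to show that every irreducible component $V$ of $\B_{+,f}(L)$ satisfies $L|_V$ not $f|_V$-big. If, for contradiction, $L|_V$ were $f|_V$-big, one would apply a Fujita-style approximation on $V$ to decompose a high power $L^{\otimes N}|_V$ as an $f|_V$-ample piece plus an effective piece, then lift the resulting sections to $X$ using the $f$-nefness of $L$, in order to produce a section of some $L^{\otimes m} \otimes A^{-1}$ whose vanishing locus does not contain the generic point of $V$, contradicting $V \subseteq \B_{+,f}(L)$.

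The principal obstacle is carrying out the lifting step in the general noetherian setting, where Kawamata-Viehweg vanishing and resolution of singularities are unavailable. I expect to compensate by induction on the relative dimension of $f$, using Cartier divisors in $|A^{\otimes r}|$ for $r$ sufficiently large to cut down dimension while avoiding the relevant generic points, and by invoking the Keel-type semi-ampleness criterion also established in Section~\ref{sec:K}. The $f$-nefness of $L$ is essential both to ensure the inductive hypothesis applies to the cuts and to control intersection behavior on fibers.
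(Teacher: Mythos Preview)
Your argument for $\E_f(L) \subseteq \B_{+,f}(L)$ is essentially the paper's Proposition~\ref{prop:K-E subseteq B_+} and is fine (and you are right that nefness is not needed there).

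The reverse inclusion is where your proposal has a genuine gap, precisely at the lifting step you yourself flag as the principal obstacle. Lifting a section from an arbitrary integral closed subscheme $V \subseteq X$ to $X$ requires controlling $H^1(X, \MI_V \otimes L^{\otimes m} \otimes A^{-1})$, and the ideal sheaf $\MI_V$ of an irreducible component of $\B_{+,f}(L)$ has no reason to be invertible, so there is no twist to which Fujita-type vanishing applies; nefness of $L$ alone buys no vanishing. Your proposed remedies do not close this: cutting $X$ by a member of $|A^{\otimes r}|$ lowers the relative dimension of $X$ but does nothing to improve the ideal sheaf of $V$, and after cutting you face the same lifting problem one dimension down; moreover in the general noetherian setting there are no Bertini theorems to guarantee well-behaved cuts. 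Invoking Theorem~\ref{K-main2} is also not available, since in the paper it is proved \emph{after} Theorem~\ref{K-main1} and by the same mechanism.

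The paper's route is different and avoids ever lifting from an unstructured $V$. One runs noetherian induction on $\Supp(X)$. Assuming $L$ is $f$-weakly big, the generic-length estimates of Lemmas~\ref{lem:K-1} and~\ref{lem:K-3} produce (Lemma~\ref{lem:K-4}) a section $s \in H^0(X, L^{\otimes m} \otimes A^{-1})$ with $s|_{X_\red} \neq 0$. Taking an irredundant decomposition $X = X' \cup X''$ separating the primary components according to whether or not they lie in $\Supp(Z(s))$, one chooses $r$ large enough that $X'' \subseteq Z \coloneqq Z(s^{\otimes r})$ and that Fujita vanishing holds on $X'$ for $(A|_{X'})^{\otimes r-1}$ twisted by any $f|_{X'}$-nef sheaf. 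The crucial point is that $Z' \coloneqq Z \cap X'$ is an effective \emph{Cartier} divisor on $X'$, so $\MI_{Z'} \simeq (L|_{X'})^{\otimes -mr} \otimes (A|_{X'})^{\otimes r}$ is invertible and Fujita vanishing kills the obstruction to lifting sections from $Z'$ to $X'$; a Mayer--Vietoris argument with $X''$ then lifts from $Z$ to $X$. This yields $\B_{+,f}(L) = \B_{+,f|_Z}(L|_Z)$, and since $\E_f(L) = \E_{f|_Z}(L|_Z)$ by Proposition~\ref{prop:E-property}~(\ref{enu:E=E|_Z-2}) and $\Supp(Z) \subsetneq \Supp(X)$, the induction closes.
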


Nakamaye first showed the theorem
for the case when $X$ is a smooth variety
and $S = \Spec\,k$, 
where $k$ is an algebraically closed field of characteristic $0$ 
(cf. \cite{Nak00}).
Ein--Lazarsfeld--Musta\c{t}\u{a}--Nakamaye--Popa showed analogous results 
for the case when $X$ is a smooth complex variety,
$S=\Spec\, \C$
and $L$ is an $\R$-divisor (\cite{ELMNP}).
Cacciola--Lopez showed the theorem 
for the case when $X$ is a normal variety,
the dimension of non-lc locus of $X$ is less than or equal to $1$, 
$S = \Spec\, \C$
and $L$ is a $\Q$-Cartier $\Q$-divisor (cf. \cite{CL14}).
Boucksom--Cacciola--Lopez proved related results concerning the restricted volume 
for the case when $X$ is a normal variety and $S = \Spec\, k$, where $k$ is an algebraically closed field (\cite{BCL14}).
%
%
Cascini--M\textsuperscript{c}Kernan--Musta\c{t}\u{a} proved the theorem 
for the case when $S = \Spec\,k$, 
where $k$ is an algebraically closed field of positive characteristic
 (cf. \cite{CMM}).
By using Fujita's vanishing theorem,
Birkar proved the theorem 
for the case when $S = \Spec\,k$ with an arbitrary field $k$
and $L$ is an $\R$-Cartier $\R$-divisor
 (cf. \cite{Bir17}).
Stigant mainly showed 
the theorem 
for the case when $S$ is excellent 
and $\B_{+,g}(L|_{X_\Q})=\E_g(L|_{X_\Q})$
where $g:X_\Q \to S_\Q$ is the induced morphism
(cf. \cite{Sti21}).

\begin{thm}[Theorem~\ref{K-main2}] \label{intro:Keel}
	Let $f:X \to S$ be a projective morphism 
	of noetherian schemes 
	and let $L$ be an $f$-nef invertible sheaf on $X$.
	Then, there exists a closed subscheme $F$ of $X$
	such that
	$\Supp(F) = \E_f(L)$
	and 
	$\SB_f(L) = \SB_{f|_F}(L|_F)$ hold.
\end{thm}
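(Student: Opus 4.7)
The plan is to take $F$ to be the scheme-theoretic base locus of a suitably chosen twist $L^{\otimes m_0} \otimes A^{-N}$ and to prove $\SB_f(L) = \SB_{f|_F}(L|_F)$ by lifting sections from $F$ using relative Fujita vanishing. By Theorem~\ref{intro:Nakamaye}, $\B_{+,f}(L) = \E_f(L)$; and since $X$ is noetherian, the descending chain $\{\SB_f(L^{\otimes m} \otimes A^{-N})\}_{m \geq 1}$ stabilizes at its limit $\B_{+,f}(L)$. After fixing $N$ large (the precise requirement being dictated by the Fujita step below), choose $m_0$ with $\SB_f(L^{\otimes m_0} \otimes A^{-N}) = \E_f(L)$ set-theoretically. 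Define $\MJ \subseteq \MO_X$ as the image of
\[
f^* f_*(L^{\otimes m_0} \otimes A^{-N}) \otimes L^{\otimes -m_0} \otimes A^{N} \longrightarrow \MO_X,
\]
and let $F := V(\MJ)$, so that $\Supp(F) = \SB_f(L^{\otimes m_0} \otimes A^{-N}) = \E_f(L)$ as sets.

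For the inclusion $\SB_{f|_F}(L|_F) \subseteq \SB_f(L)$: if $p \in F$ and $p \notin \SB_f(L)$, then $f^* f_* L^{\otimes m} \to L^{\otimes m}$ is surjective at $p$ for some $m$; restricting to $F$ and using that this map factors through $(f|_F)^* (f|_F)_*(L^{\otimes m}|_F) \to L^{\otimes m}|_F$ yields $p \notin \SB_{f|_F}(L|_F)$. Moreover, $\SB_f(L) \subseteq \B_{+,f}(L) = \Supp(F)$, so only the reverse inclusion requires real work.

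The core task is to show $f_* L^{\otimes m} \to f_*(L^{\otimes m}|_F)$ is surjective for every $m \geq m_0$, for then any section of $L^{\otimes m}|_F$ non-vanishing at a point $p$ lifts to one of $L^{\otimes m}$. From the sequence $0 \to \MJ \cdot L^{\otimes m} \to L^{\otimes m} \to L^{\otimes m}|_F \to 0$, it suffices to prove $R^1 f_*(\MJ \cdot L^{\otimes m}) = 0$. Writing $\MK$ for the kernel of $f^* f_*(L^{\otimes m_0} \otimes A^{-N}) \twoheadrightarrow \MJ \otimes L^{\otimes m_0} \otimes A^{-N}$ and twisting by $L^{\otimes m - m_0} \otimes A^{N}$, the desired vanishing follows if both $R^1 f_*\bigl(f^* f_*(L^{\otimes m_0} \otimes A^{-N}) \otimes L^{\otimes m - m_0} \otimes A^{N}\bigr) = 0$ and $R^2 f_*\bigl(\MK \otimes L^{\otimes m - m_0} \otimes A^{N}\bigr) = 0$. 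The projection formula identifies the first with $f_*(L^{\otimes m_0} \otimes A^{-N}) \otimes R^1 f_*(L^{\otimes m - m_0} \otimes A^{N})$, which vanishes by relative Fujita vanishing (applied to the nef sheaf $L^{\otimes m-m_0}$ and the $f$-ample $A^N$) once $N$ is past the Fujita threshold for $\MO_X$; the second vanishes by the same theorem applied to the coherent sheaf $\MK$.

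The main obstacle will be this simultaneous choice of $N$ and $m_0$: the Fujita threshold for $\MK$ depends on $N$ through the very definition of $\MJ$ and $\MK$, so a naive choice risks circularity. A workaround is either to invoke a sufficiently uniform form of relative Fujita vanishing valid in the noetherian setting, or to thicken $F$ by replacing $\MJ$ with a power $\MJ^{k}$ (a nilpotent thickening preserving $\Supp(F) = \E_f(L)$) so that the auxiliary kernel becomes controllable. Making this work in the full noetherian generality of the theorem, without the excellence hypothesis of \cite{Sti21}, is the principal technical content.
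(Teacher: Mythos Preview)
Your proposal leaves open precisely the step you flag as the ``main obstacle'': the Fujita threshold for $\MK$ depends on $(N,m_0)$, which in turn must be chosen after a Fujita-type bound is fixed. Neither workaround you sketch actually closes this loop. There is no Fujita vanishing uniform over \emph{all} coherent sheaves, and passing from $\MJ$ to a power $\MJ^k$ does not decouple the kernel from $(N,m_0)$: the natural surjection $(L^{-km_0}\otimes A^{kN})^{\oplus *}\twoheadrightarrow \MJ^k$ has a kernel that still depends on the same data, so the circularity reappears at the $R^2$ step. Since you yourself say that ``making this work \ldots\ is the principal technical content,'' what you have written is a programme whose decisive step is missing, not a proof.

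The paper circumvents the difficulty by a different architecture: noetherian induction in place of a one-shot construction of $F$. Lemma~\ref{intro:lem:K-4} supplies a single section $s\in H^0(X,L^{\otimes m}\otimes A^{-1})$ with $s|_{X_{\red}}\neq 0$. An irredundant decomposition $X=X'\cup X''$ separates the primary components with $\Supp(X_j)\nsubseteq\Supp(Z(s))$ (collected in $X'$) from the rest; one then takes $r\gg0$ so that $X''\subseteq Z(s^{\otimes r})\eqqcolon Z$. On $X'$ the restriction $Z'\coloneqq Z\cap X'$ is an effective \emph{Cartier} divisor, hence $\MI_{Z'}\simeq(L|_{X'})^{-mr}\otimes(A|_{X'})^{\otimes r}$ is invertible, and Fujita vanishing applies directly to $\MI_{Z'}\otimes(L|_{X'})^{\otimes m'}$ with no auxiliary kernel and hence no circularity. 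A Mayer--Vietoris argument then glues a lift on $X'$ with the restriction to $X''\subseteq Z$, giving $\SB_{f|_Z}(L|_Z)=\SB_f(L)$; since $\Supp(Z)\subsetneq\Supp(X)$, the inductive hypothesis on $Z$ produces $F$. The device you are missing is to work with the zero scheme of one section---whose ideal is invertible on the locus where it matters---rather than with the full base ideal $\MJ$.
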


%
%
Keel first proved the theorem
for the case when $S = \Spec\, k$ with a field $k$ of positive characteristic
and $\SB_f(L) = \emptyset$ (cf. \cite{Kee99}).
Cascini--M\textsuperscript{c}Kernan--Musta\c{t}\u{a} simplified the proof 
for the case when $k$ is an algebraically closed field of positive characteristic (cf. \cite{CMM}).
Birkar proved 
the theorem
for the case when $S = \Spec\, k$
with an arbitrary field $k$,
$L$ is an $\Q$-Cartier $\Q$-divisor 
and $\SB_f(L) = \emptyset$
(cf. \cite{Bir17}).
Cascini--Tanaka showed the theorem
for the case when $S$ is an $\Fp$-scheme
(cf. \cite[Subsection~2.5]{CT}).
Although their definition of $\E_f(L)$ is slightly different from our definition, we shall show that these two definitions are equivalent (cf. Remark~\ref{rem:exceptional-locus}).
Stigant showed the theorem
for the case when $S$ is excellent, $L|_{X_\Q}$ is semi-ample and $F$ is reduced.
(cf. \cite{Sti21}).

In Section~\ref{sec:App},
we mainly show the following result.

\begin{thm}[Corollary~\ref{App-Kodaira's lemma}]\label{intro:App-Kodaira's lemma}
	Let $f:X \to S$ be a projective morphism of noetherian schemes,
	with $X$ reduced and $S$ affine.
	Let $L$ be an invertible sheaf on $X$, and
	let $A$ be an $f$-ample invertible sheaf on $X$.
	If $L|_V$ is $f|_V$-big
	for any irreducible component $V$ of $X$,
	then for sufficiently large $m \in \Z_{>0}$,
	there is an effective Cartier divisor $D$ on $X$ 
	such that $L^{\otimes m} \simeq A \otimes_{\MO_X} \MO_X(D)$.
\end{thm}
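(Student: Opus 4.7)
The plan is to build a regular section (i.e.\ a non-zero-divisor) of $L^{\otimes m}\otimes A^{-1}$ for $m$ large; its vanishing scheme is then the desired effective Cartier divisor $D$. Let $V_1,\dots,V_n$ be the irreducible components of $X$, each integral since $X$ is reduced, and set $W_i:=\bigcup_{j\neq i}V_j$ with its reduced structure and ideal sheaf $\MI_{W_i}=\bigcap_{j\neq i}\MI_{V_j}\subseteq\MO_X$.

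The crucial gluing observation is this: reducedness of $X$ gives $\MI_{W_i}\cap\MI_{V_i}=\bigcap_j\MI_{V_j}=0$, so the natural composite $\MI_{W_i}\hookrightarrow\MO_X\twoheadrightarrow\MO_{V_i}$ is injective and identifies $\MI_{W_i}$ with its image $\MK_i\subseteq\MO_{V_i}$, namely the ideal of $V_i\cap W_i$ inside $V_i$. Consequently any section $\sigma_i\in H^0(V_i,\MK_i\otimes(L^{\otimes m}\otimes A^{-1})|_{V_i})$ lifts canonically to $\tilde\sigma_i\in H^0(X,\MI_{W_i}\otimes L^{\otimes m}\otimes A^{-1})\subseteq H^0(X,L^{\otimes m}\otimes A^{-1})$ with $\tilde\sigma_i|_{V_i}=\sigma_i$ and $\tilde\sigma_i|_{V_j}=0$ for $j\neq i$. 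This sidesteps the usual Mayer--Vietoris obstruction for closed covers with more than two members.

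To manufacture $\sigma_i$ I apply the integral case of Kodaira's lemma (a previously established preliminary in this section) to $V_i$: since $L|_{V_i}$ is $f|_{V_i}$-big on the integral scheme $V_i$, there exist $m_i\geq 1$ and a regular section $\tau_i\in H^0(V_i,(L^{\otimes m_i}\otimes A^{-1})|_{V_i})$. Let $m_0$ be a common multiple of $m_1,\dots,m_n$ and take $m=Nm_0$ with $N$ large; set $e_i:=m/m_i$. Then $\tau_i^{\otimes e_i}$ is a regular section of $(L^{\otimes m}\otimes A^{-e_i})|_{V_i}$. Because $A|_{V_i}$ is $f|_{V_i}$-ample and $\MK_i$ is coherent, for $N$ large uniformly in $i$ the sheaf $\MK_i\otimes A^{\otimes (e_i-1)}|_{V_i}$ is globally generated and so admits a nonzero (hence regular) section $\alpha_i$. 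The product $\sigma_i:=\tau_i^{\otimes e_i}\cdot\alpha_i$ then lies in $H^0(V_i,\MK_i\otimes(L^{\otimes m}\otimes A^{-1})|_{V_i})$ and is regular, as a product of regular sections on an integral scheme.

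Assembling everything, put $\sigma:=\sum_{i=1}^n\tilde\sigma_i\in H^0(X,L^{\otimes m}\otimes A^{-1})$. For each $j$, restriction gives $\sigma|_{V_j}=\sigma_j$, which is nonzero; thus $\sigma$ is nonvanishing at every associated point of the reduced scheme $X$ and is therefore a regular section, defining the desired $D$. The main technical obstacle is engineering $\sigma_i$ to sit inside $\MK_i$ while remaining a section of the correct bundle $(L^{\otimes m}\otimes A^{-1})|_{V_i}$; this forces one to combine bigness of $L|_{V_i}$ with ampleness of $A$ and to uniformize the exponent $m$ across all components.
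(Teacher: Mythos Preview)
Your argument is essentially correct and takes a genuinely different route from the paper's, with one small gap to patch.

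\textbf{Comparison.} The paper deduces the corollary from Theorem~\ref{thm:App-main}, whose core is the algebraic prime-avoidance Lemma~\ref{lem:App-main}: using generic-length growth estimates one shows that the kernels of the restriction maps $H^0(X,H_m)\to H^0(V_i,H_m|_{V_i})$ cannot cover $H^0(X,H_m)$, and this requires a reduction to finitely generated $\Z$-algebras together with a cardinality count. You bypass all of this by exploiting the isomorphism $\MI_{W_i}\simeq\MK_i$ (valid because $\MI_{V_i}\cdot\MI_{W_i}\subseteq\MI_{V_i}\cap\MI_{W_i}=0$, so $\MI_{W_i}$ is already an $\MO_{V_i}$-module) to \emph{construct} explicitly a section on $X$ with prescribed nonzero restriction to every $V_i$; the only inputs are the integral case (Lemma~\ref{E-main}) and Serre-type global generation for $\MK_i\otimes A^{\otimes c}$. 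Your approach is more elementary and more direct; the paper's route, by contrast, isolates Lemma~\ref{lem:App-main} as an independent module-theoretic statement.

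\textbf{Minor gap.} As written you fix a single $m_i$ per component and then take $m=Nm_0$ with $m_0$ a common multiple of the $m_i$, so you obtain the conclusion only for sufficiently large \emph{multiples} of $m_0$, whereas the statement (under the convention of Subsection~\ref{sec:Notation}) demands all sufficiently large $m$. The fix is immediate and stays within your framework: first choose $c_i$ with $\MK_i\otimes (A|_{V_i})^{\otimes c_i}$ globally generated and pick a nonzero $\alpha_i\in H^0\bigl(V_i,\MK_i\otimes (A|_{V_i})^{\otimes c_i}\bigr)$; since $A^{\otimes(c_i+1)}$ is still $f$-ample, Lemma~\ref{E-main} applied to $V_i$ yields a nonzero $\tau_i(m)\in H^0\bigl(V_i,(L^{\otimes m}\otimes A^{-(c_i+1)})|_{V_i}\bigr)$ for \emph{every} sufficiently large $m$, and then $\sigma_i:=\alpha_i\cdot\tau_i(m)\in H^0\bigl(V_i,\MK_i\otimes(L^{\otimes m}\otimes A^{-1})|_{V_i}\bigr)$ does the job for all such $m$.
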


\subsection{Sketch of the proofs}

To prove Theorem~\ref{intro:Nakamaye} and Theorem~\ref{intro:Keel},
we basically follow the strategies of \cite{CMM} and \cite{Bir17}.
In place of the dimension over fields,
we use the generic length over base schemes.

\begin{dfn}[Definition~\ref{def:glen}]
	\begin{enumerate}
		\item
			Let $A$ be a noetherian ring which has only one minimal prime ideal $\Mp$
			and let $M$ be a finitely generated $A$-module.
			Then, 
			we define the {\em generic $A$-length} $\glen_A\, M$ of $M$ as
			the length of $A_\Mp$-module $M_\Mp$.
		\item
			Let $X$ be an irreducible noetherian scheme
			and let $\MF$ be a coherent sheaf on $X$.
			Let $\xi \in X$ be the generic point of $X_{\red}$.
			Then, 
			we define the {\em generic $X$-length} $\glen_X\, \MF$ of $\MF$ as
			the length of $\MO_{X, \xi}$-module $\MF_\xi$.
	\end{enumerate}
\end{dfn}

Note that 
the generic $X$-length is not defined
when $X$ is not irreducible.
%
We show the following two inequalities: Lemma~\ref{intro:lem:K-1} and Lemma~\ref{intro:lem:K-3}.

\begin{lem}[Lemma~\ref{lem:K-1}]\label{intro:lem:K-1}
	Let $f:X \to S$ be a projective morphism of noetherian schemes,
	with $S$ irreducible.
	Let $d \coloneqq \dim(X/S)$.
	Let $\MF$ be a coherent sheaf on $X$, and
	let $L$ be an $f$-nef invertible sheaf on $X$.
	Then, for any $j \in \Z_{\ge0}$, 
	there is a positive real number $C \in \R_{>0}$
	such that for sufficiently large $m \in \Z_{>0}$,
	\[
	\glen_S\,R^j f_* (\MF \otimes_{\MO_X} L ^{\otimes m})
	\le C m^{d-j}
	\]
	holds.
\end{lem}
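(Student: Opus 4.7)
The plan is to reduce the claim to the analogous bound over a field via two successive d\'evissages. First I would localize at the generic point $\xi$ of $S$ to pass to the artinian base ring $B := \MO_{S,\xi}$; then I would filter by powers of the maximal ideal $\mathfrak{m}_B \subset B$ to pass to the residue field $K := B/\mathfrak{m}_B$. Since both intermediate steps only introduce multiplicative constants independent of $m$, no extra growth in $m$ is picked up.

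For the first reduction, since $S$ is noetherian and irreducible, $B$ is a noetherian local ring with a unique prime ideal, hence artinian. By the definition of the generic $S$-length together with flatness of localization, one obtains
\[
\glen_S R^j f_*(\MF \otimes L^{\otimes m}) = \text{length}_B\, H^j\bigl(X_B,\, (\MF \otimes L^{\otimes m})|_{X_B}\bigr),
\]
where $X_B := X \times_S \Spec B$. For the second reduction, I would use the finite filtration
\[
\MF|_{X_B} \supset \mathfrak{m}_B \MF|_{X_B} \supset \cdots \supset \mathfrak{m}_B^N \MF|_{X_B} = 0,
\]
valid because $B$ is artinian. Each successive quotient $\MG_i := \mathfrak{m}_B^i \MF|_{X_B}/\mathfrak{m}_B^{i+1} \MF|_{X_B}$ is annihilated by $\mathfrak{m}_B$ and hence is a coherent sheaf on the closed subscheme $X_K := X \times_S \Spec K$. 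Tensoring with the locally free sheaf $L^{\otimes m}$, iterating the long exact sequences of cohomology, and using that $B$-length is additive on short exact sequences, one obtains
\[
\glen_S R^j f_*(\MF \otimes L^{\otimes m}) \le \sum_{i=0}^{N-1} \dim_K H^j\bigl(X_K,\, \MG_i \otimes L|_{X_K}^{\otimes m}\bigr).
\]

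It then suffices to produce, for each $i$, a constant $C_i > 0$ with $\dim_K H^j(X_K, \MG_i \otimes L|_{X_K}^{\otimes m}) \le C_i m^{d-j}$ for $m \gg 0$, since summing over the finitely many $i$ yields the desired bound. Because $\dim X_K \le d$, the task becomes the classical polynomial-growth statement $h^j(Y, \MN \otimes M^{\otimes m}) = O(m^{\dim Y - j})$ for $Y$ projective over a field with $\MN$ coherent and $M$ nef, and this is the main obstacle of the proof. The standard route, which I would follow, is induction on $\dim Y$: pick a general section of a sufficiently ample linear system that is regular on $\MN$, use the resulting short exact sequence to peel off a hyperplane section (of smaller dimension) and handle the kernel by twisting, and invoke Fujita's vanishing theorem to eliminate the correction term after tensoring with a sufficiently high power of an ample divisor. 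This controls the higher cohomology recursively and reduces the $j = 0$ case to the Snapper-type fact that $\chi(\MN \otimes M^{\otimes m})$ is a polynomial in $m$ of degree at most $\dim Y$.
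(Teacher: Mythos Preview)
Your proof is correct and ends at the same place as the paper---the polynomial growth bound $h^j(Y,\MN\otimes M^{\otimes m})=O(m^{\dim Y-j})$ over a field, which the paper simply cites from \cite[Proposition~4.4(2)]{Bir17}---but your d\'evissage is genuinely different. The paper filters by powers of the nilradical $\MN$ of $X$, reducing first to sheaves supported on $X_{\red}$ and hence to the integral base $S_{\red}$; this, however, only directly handles $\MF$ of the form $\MN^i\otimes\ME$ with $\ME$ locally free, so a second step is needed: write a general $\MF$ as a quotient of a locally free sheaf and run descending induction on $j$ using the kernel. Your route instead localizes at the generic point of $S$ first, obtaining an artinian base $B$, and then filters $\MF|_{X_B}$ by powers of $\mathfrak m_B$. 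Since each graded piece is already a sheaf on the generic fibre $X_K$, this handles arbitrary coherent $\MF$ in one pass without the locally-free-resolution detour. The trade-off is that the paper's filtration lives on $X$ itself and is perhaps more transparent geometrically, while yours is shorter and avoids the extra induction on $j$.
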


\begin{proof}[Sketch of the proof of Lemma~\ref{intro:lem:K-1}]
	Let $\MN$ be the nilradical ideal sheaf of $X$.
	By using the exact sequence 
	\[
	0
	\to \MN^{i+1}
	\to \MN^i
	\to \MN^i / \MN^{i+1}
	\to 0,
	\]
	we reduce the proof to the case 
	where $X$ is reduced.
	When $X$ is reduced,
	by taking the generic fibre of $f(X)$,
	we can reduce the proof to the case 
	where $S = \Spec\,k$
	with a field $k$,
	which is shown by Birkar (cf. \cite{Bir17}).
\end{proof}

\begin{lem}[Lemma~\ref{lem:K-3}]\label{intro:lem:K-3}
	Let $f:X \to S$ be a projective surjective morphism 
	of irreducible noetherian schemes.
	Let $d \coloneqq \dim(X/S)$.
	Let $\ME$ be a coherent locally free sheaf on $X$, and
	let $L$ be an $f$-nef invertible sheaf on $X$.
	If $L$ is $f$-big,
	there is a positive real number $C \in \R_{>0}$
	such that for sufficiently large $m \in \Z_{>0}$,
	\[
	\glen_S\, f_* (\ME \otimes_{\MO_X} L ^{\otimes m})
	\ge C m^{d}
	\]
	holds.
\end{lem}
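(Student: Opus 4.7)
My plan is to combine Lemma~\ref{intro:lem:K-1} with an asymptotic Riemann--Roch computation after base-changing to the generic point of $S$. Let $\eta$ be the generic point of $S$, set $A := \mathcal{O}_{S,\eta}$ (an artinian local ring), let $K := A/\mathfrak{m}_\eta$, and let $X_A := X \times_S \Spec A$. For any coherent sheaf $\mathcal{F}$ on $X$, flat base change identifies $(R^j f_* \mathcal{F})_\eta$ with $H^j(X_A, \mathcal{F}_A)$ as $A$-modules, so the functional
\[
\chi_S(\mathcal{F}) \; := \; \sum_{j \ge 0} (-1)^j \glen_S R^j f_*\mathcal{F}
\]
is additive on short exact sequences of coherent sheaves on $X$. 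Lemma~\ref{intro:lem:K-1} gives $\glen_S R^j f_*(\mathcal{E} \otimes L^{\otimes m}) = O(m^{d-1})$ for $j \ge 1$, so it will suffice to prove $\chi_S(\mathcal{E} \otimes L^{\otimes m}) \ge c m^d$ for some $c > 0$ and all sufficiently large $m$.

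Let $\mathcal{N}$ be the nilradical of $\mathcal{O}_{X_A}$ and $Y := (X_A)_{\red}$. Then $Y$ is a reduced projective $K$-scheme of dimension $d$ whose underlying space is $f^{-1}(\eta)$. Since $\mathcal{N}^N = 0$ for some $N$, one has the finite filtration $\{\mathcal{N}^i(\mathcal{E}_A \otimes L_A^{\otimes m})\}_i$ of $\mathcal{E}_A \otimes L_A^{\otimes m}$, and the local freeness of $\mathcal{E}_A$ makes its successive quotients isomorphic to $\mathcal{H}_i \otimes_{\mathcal{O}_Y} L|_Y^{\otimes m}$, where
\[
\mathcal{H}_i \; := \; (\mathcal{N}^i/\mathcal{N}^{i+1}) \otimes_{\mathcal{O}_Y} \mathcal{E}|_Y
\]
is a coherent $\mathcal{O}_Y$-module independent of $m$. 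Since the $A$-length of a $K$-module equals its $K$-dimension, additivity of $\chi_S$ then produces
\[
\chi_S(\mathcal{E} \otimes L^{\otimes m}) \; = \; \sum_{i=0}^{N-1} \chi\bigl(Y,\, \mathcal{H}_i \otimes_{\mathcal{O}_Y} L|_Y^{\otimes m}\bigr).
\]

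To estimate the right-hand side I would apply Snapper's lemma on $Y$: each $\chi(Y, \mathcal{H}_i \otimes L|_Y^{\otimes m})$ is a polynomial in $m$ of degree $\le d$ whose leading coefficient is $\frac{1}{d!}\sum_V r_V(\mathcal{H}_i) (L|_V)^d$, summed over the $d$-dimensional irreducible components $V$ of $Y$. The $f$-nefness of $L$ makes every $(L|_V)^d$ non-negative, so every summand has non-negative leading coefficient. For the index $i = 0$, $\mathcal{H}_0 = \mathcal{E}|_Y$ is locally free of rank $r := \rank \mathcal{E} > 0$; and letting $V_0$ be the $d$-dimensional component of $Y$ containing the image of the generic point of $X$, the $f$-bigness of $L$ (via Kodaira's lemma on the generic fibre, cf.\ Theorem~\ref{intro:App-Kodaira's lemma}) implies that $L|_{V_0}$ is big and nef, so $(L|_{V_0})^d > 0$. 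Hence the $i=0$ summand has strictly positive leading coefficient, and the non-negative leading behaviour of the $i \ge 1$ summands lets them be absorbed into the lower-order error.

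The main technical point will be the careful form of Snapper's lemma on a possibly reducible reduced projective $K$-scheme (identifying the leading coefficient via the generic ranks along the top-dimensional components), together with the transfer of $f$-bigness of $L$ to bigness of $L|_{V_0}$ on the generic component $V_0 \subset Y$. Both are extensions of the classical case over a field that are available in the literature referenced in the introduction.
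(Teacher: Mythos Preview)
Your argument is correct but takes a genuinely different route from the paper's. The paper works directly with $H^0$: it uses the single short exact sequence $0 \to \MN \to \MO_X \to \MO_{X_{\red}} \to 0$, bounds $\glen_S H^1(X,\MN \otimes \ME \otimes L^{\otimes m})$ by $O(m^{d-1})$ via Lemma~\ref{intro:lem:K-1}, and then invokes Birkar's lower bound \cite[Proposition~4.3]{Bir17} over the field $K(S_{\red})$ to get $\glen_S H^0(X_{\red},(\ME\otimes L^{\otimes m})|_{X_{\red}}) \ge C_2 m^d$. You instead pass to the Euler characteristic $\chi_S$, apply Lemma~\ref{intro:lem:K-1} to kill all $j\ge 1$ at once, filter by the full nilradical on $X_A$, and compute the resulting sum of Euler characteristics on $Y=(X_A)_{\red}$ via asymptotic Riemann--Roch / Snapper together with the implication ``nef $+$ big $\Rightarrow (L|_Y)^d>0$''. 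The paper's approach is shorter because it outsources the field case entirely to \cite{Bir17}; yours is more self-contained in that it replaces that citation by classical intersection theory, at the cost of setting up the $\chi_S$ formalism and the filtration. Two minor remarks: since $X$ is irreducible and $f$ is surjective, the generic point of $X$ lies over $\eta$ and is dense in $X_\eta$, so $Y$ is in fact integral and your sum over components $V$ has a single term $V_0=Y$; and for the step ``$L|_{V_0}$ nef and big $\Rightarrow (L|_{V_0})^d>0$'' the classical Kodaira lemma over a field suffices, so you need not invoke Theorem~\ref{intro:App-Kodaira's lemma} (which, although logically independent of Lemma~\ref{intro:lem:K-3}, appears later in the paper).
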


The proof is similar to the one of Lemma~\ref{intro:lem:K-1}.
The key lemma is the following.


\begin{lem}[Lemma~\ref{lem:K-4}] \label{intro:lem:K-4}
	Let $f:X \to S$ be a projective morphism of noetherian schemes,
	with $S$ affine.
	Let $L$ be an $f$-nef invertible sheaf on $X$, and
	let $A$ be an $f$-ample invertible sheaf on $X$.
	Assume that 
	$L$ is $f$-weakly big.
	Then, for sufficiently large $m \in \Z_{>0}$,
	there is a section $s \in H^0(X,L^{\otimes m}\otimes_{\MO_X} A^{-1})$
	such that 
	$s|_{X_\red} \neq 0$ holds.
\end{lem}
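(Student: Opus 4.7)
The plan is a cohomological comparison along the nilradical filtration. Let $\MN$ denote the nilradical ideal sheaf of $X$, and consider the short exact sequence
\[
0 \to \MN \cdot (L^{\otimes m} \otimes A^{-1}) \to L^{\otimes m} \otimes A^{-1} \to (L^{\otimes m} \otimes A^{-1})|_{X_\red} \to 0.
\]
Since $S$ is affine, producing an $s$ with $s|_{X_\red} \neq 0$ is equivalent to showing that the restriction map $H^0(X, L^{\otimes m} \otimes A^{-1}) \to H^0(X_\red, (L^{\otimes m} \otimes A^{-1})|_{X_\red})$ has nonzero image; by the long exact sequence, this image is the kernel of the connecting morphism into $H^1(X, \MN \cdot (L^{\otimes m} \otimes A^{-1}))$. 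So it suffices to show that the generic $S$-length of the source strictly exceeds that of the target for $m \gg 0$.

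First I would reduce to the case where $S$ is irreducible, so that $\glen_S$ is well-defined on all the sheaves appearing. Next, Lemma~\ref{intro:lem:K-1} applied with $\MF = \MN \otimes_{\MO_X} A^{-1}$ and $j = 1$ yields
\[
\glen_S H^1\bigl(X, \MN \cdot (L^{\otimes m} \otimes A^{-1})\bigr) \le C_1\, m^{d-1},
\]
where $d = \dim(X/S)$. For the lower bound on $H^0(X_\red, \cdot)$, I would extract from the weak bigness of $L$ an irreducible component $V$ of $X_\red$ dominating $S$ with $\dim(V/S) = d$ and $L|_V$ being $(f|_V)$-big; standard bigness on the generic fiber of $V \to S$ then yields $\glen_S H^0\bigl(V, (L|_V)^{\otimes m} \otimes (A|_V)^{-1}\bigr) \ge C_2\, m^d$, and a further application of Lemma~\ref{intro:lem:K-1} on $X_\red$ (with the ideal sheaf of $V$ in $X_\red$ playing the role of $\MF$) bounds the kernel of the restriction $H^0(X_\red, \cdot) \to H^0(V, \cdot)$ by $C_3\, m^{d-1}$, giving
\[
\glen_S H^0\bigl(X_\red, (L^{\otimes m} \otimes A^{-1})|_{X_\red}\bigr) \ge C_2\, m^d - C_3\, m^{d-1}.
\]

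Combining the two estimates, the image of the restriction map has generic $S$-length at least $C_2\, m^d - (C_1 + C_3)\, m^{d-1}$, which is strictly positive for $m \gg 0$, yielding the desired section. The main obstacle I expect is the first step: correctly interpreting $f$-weak bigness and the reduction to $S$ irreducible so that one extracts a component $V$ with $\dim(V/S) = d$. This dimensional matching is essential because the two polynomial bounds are of degree $d$ and $d-1$ respectively, and the comparison breaks down if the lower-bound component has relative dimension less than $d$; once it is in place, the rest is a routine polynomial comparison driven by Lemma~\ref{intro:lem:K-1}.
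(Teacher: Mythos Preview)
Your approach has a genuine gap at exactly the point you flag as the ``main obstacle,'' and it is not merely a technicality. Weak bigness only guarantees that \emph{some} irreducible component $V$ of $X_\red$ has $L|_V$ big; there is no reason $V$ should satisfy $\dim(V/S) = d = \dim(X/S)$. If $\dim(V/S) < d$, the lower bound coming from bigness on $V$ is of order $m^{\dim(V/S)}$, while the upper bound $\glen_S H^1(X, \MN \otimes M_m) \le C_1 m^{d-1}$ from Lemma~\ref{intro:lem:K-1} still involves the full $d$, and the comparison breaks down. (Concretely: over $S=\Spec k$ take $X_\red = \mathbb{P}^2_k \sqcup \mathbb{P}^1_k$, put nilpotents on the $\mathbb{P}^2$ part, and let $L$ be trivial on $\mathbb{P}^2$ and $\MO(1)$ on $\mathbb{P}^1$.) Reducing to $S$ irreducible does not help, since the mismatch lives on $X$. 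There is also a smaller slip in the $X_\red \to V$ step: bounding the \emph{kernel} of $H^0(X_\red,\cdot)\to H^0(V,\cdot)$ does not yield a lower bound on $H^0(X_\red,\cdot)$ from one on $H^0(V,\cdot)$; you would need to control the cokernel, e.g.\ via Mayer--Vietoris and a bound on $H^0(T,\cdot)$ with $T=V\cap V'$. Even after that fix, the dimension problem persists.

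The paper's proof avoids all of this by never comparing against $\dim(X/S)$. It takes an irredundant decomposition $X=\bigcup_j X_j$, picks the irreducible (generally non-reduced) piece $X_1$ with $L|_{X_1}$ big, and measures all generic lengths over $R$ where $\Spec R = f(X_1)$, setting $d=\dim(X_1/f(X_1))$. Two upper bounds are needed: one on $H^1(X_1, \MN_{X_1}\otimes M_m)$ (nilradical of $X_1$, not of $X$) and one on $H^0(T,M_m|_T)$ with $T=X_1\cap X'$. Both are $\le C\,m^{d-1}$ because the relevant schemes are proper closed subschemes of the \emph{irreducible} $X_1$, so Proposition~\ref{RD-<} applies over $f(X_1)$. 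A Mayer--Vietoris argument then shows the composite $H^0(X,M_m)\to H^0(X_1,M_m|_{X_1})\to H^0(X_{1,\red},M_m|_{X_{1,\red}})$ is nonzero for $m\gg 0$, which gives the desired section. The essential idea you are missing is to replace the base by $f(X_1)$ and tie every estimate to $X_1$ rather than to $X$.
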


By using Lemma~\ref{intro:lem:K-1} and Lemma~\ref{intro:lem:K-3} 
we show the Lemma~\ref{intro:lem:K-4}.

\begin{proof}[Sketch of the proof of Lemma~\ref{intro:lem:K-4}]
	We show the following simple two cases:
	\begin{enumerate}
		\renewcommand{\labelenumi}{(\roman{enumi})}
		\item 
			$X$ is reduced;
		\item 
			$X$ is irreducible.
	\end{enumerate}
	We can show the general case
	by the combination of this two cases.
	
	(i)
	There is an irreducible component $X_1$ of $X$ 
	such that $L|_{X_1}$ is $f|_{X_1}$-big.
	Let $X'$ be the union of the other irreducible components.
	Let $T$ be the scheme-theoretic intersection of $X_1$ and $X'$.
	We may assume that $S$ and $f(X_1)$ are affine
	and write $f(X_1) = \Spec\, R$ with a noetherian ring $R$.
	Let $d=\dim(X_1 / f(X_1))$ and
	let $M_m \coloneqq L^{\otimes m} \otimes_{\MO_X} A^{-1}$.
	By Lemma~\ref{intro:lem:K-3},
	there is a positive real number $C_1 \in \R_{>0}$ 
	such that for sufficiently large $m \in \Z_{>0}$,
	\[
	\glen_R\, H^0(X_1,M_m|_{X_1}) \ge C_1 m^d
	\]
	holds.
	By Lemma~\ref{intro:lem:K-1},
	there is a positive real number $C_2 \in \R_{>0}$ 
	such that for sufficiently large $m \in \Z_{>0}$,
	\[
	\glen_R\, H^0(T,M_m|_T) \le C_2 m^{d-1}
	\]
	holds.
	By the exact sequence 
	\[
	H^0(X,M_m)
	\to H^0(X_1,M_m|_{X_1}) \oplus H^0(X',M_m|_{X'})
	\to H^0(T,M_m|_T),
	\]
	we can show that
	the natural morphism 
	$H^0(X,M_m) \to H^0(X_1,M_m|_{X_1})$ 
	is not zero
	for sufficiently large $m$.
	
	(ii)
	We may assume that $S$ and $f(X)$ are affine
	and write $f(X) = \Spec\, R$ with a noetherian ring $R$.
	Let $\MN$ be the nilradical ideal sheaf of $X$.
	Let $d=\dim(X / f(X))$ and
	let $M_m \coloneqq L^{\otimes m} \otimes_{\MO_X} A^{-1}$.
	By Lemma~\ref{intro:lem:K-3},
	there is a positive real number $C_1 \in \R_{>0}$ 
	such that for sufficiently large $m \in \Z_{>0}$,
	\[
	\glen_R\, H^0(X_\red,M_m|_{X_\red}) \ge C_1 m^d
	\]
	holds.
	By Lemma~\ref{intro:lem:K-1},
	there is a positive real number $C_2 \in \R_{>0}$ 
	such that for sufficiently large $m \in \Z_{>0}$,
	\[
	\glen_R\, H^1(X,\MN \otimes_{\MO_X} M_m) \le C_2 m^{d-1}
	\]
	holds.
	By the exact sequence 
	\[
	H^0(X,M_m)
	\to H^0(X_\red,M_m|_{X_\red})
	\to H^1(X,\MN \otimes_{\MO_X} M_m),
	\]
	we can show that
	the natural morphism 
	$H^0(X,M_m) \to H^0(X_\red,M_m|_{X_\red})$ 
	is not zero
	for sufficiently large $m$.
\end{proof}

To prove Theorem~\ref{intro:App-Kodaira's lemma},
the key lemma is the following.

\begin{lem}[Lemma~\ref{lem:App-main}]\label{intro:lem:App-main}
	Let $R$ be a noetherian ring, and
	let $M$ be a finitely generated $R$-module.
	For each $i \in \{1,2,\dots,n\}$,
	let $\Mp_i \in \Spec\,R$,
	let $R_i \coloneqq R / \Mp_i$, and
	let $M_i$ be an $R$-submodule of $M$.
	For each $i \in \{1,2,\dots,n\}$,
	assume the following conditions:  
	\begin{itemize}
		\item
			$\Mp_i (M/M_i) = 0$,
			in particular,
			$M/M_i$ is an $R_i$-module;
		\item
			if $\dim R_i = 0$, $\glen_{R_i} (M/M_i) > \log_2 n$;
		\item
			if $\dim R_i \ge 1$, $\glen_{R_i} (M/M_i) \ge 1$.
	\end{itemize}
	Then, $\bigcup_{i=1}^n M_i \neq M$ holds.
\end{lem}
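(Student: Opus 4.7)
\emph{Proof plan.} The plan is to argue by induction on $n$. The base case $n=1$ is immediate: both sub-cases force $\glen_{R_1}(M/M_1) \ge 1$ (using $\log_2 1 = 0$), hence $M \ne M_1$. For the inductive step, I would first reduce to the situation where no $M_i$ is contained in $\bigcup_{j \ne i} M_j$: if such an $i$ existed, removing it would not change the union, the hypotheses would still hold for the remaining $n-1$ submodules (as $\log_2(n-1) < \log_2 n$), and the inductive hypothesis would finish.

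The central estimate I would establish is $|M/M_i| > n$ for every $i$. If $\dim R_i = 0$, then $R_i$ is a field with $|R_i| \ge 2$ and $M/M_i$ is an $R_i$-vector space of dimension strictly exceeding $\log_2 n$, so $|M/M_i| \ge 2^{\dim_{R_i}(M/M_i)} > n$. If $\dim R_i \ge 1$, then $R_i$ is an infinite integral domain (any finite integral domain is a field, hence zero-dimensional), and $\glen_{R_i}(M/M_i) \ge 1$ yields a nonzero element of the generic localization $(M/M_i)_{(0)}$; any representative $\bar y \in M/M_i$ has $\operatorname{Ann}_{R_i}(\bar y) = 0$, giving an injection $R_i \hookrightarrow M/M_i$ and hence $|M/M_i| = \infty$.

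With this estimate, I would conclude via a union bound on the image $\bar M$ of $M$ in $\prod_i M/M_i$, letting $V_i \subseteq \bar M$ be the kernel of the $i$-th projection so that $|\bar M/V_i| = |M/M_i| > n$. When every $R_i$ is finite (so $\bar M$ is finite),
\[
\Bigl| \bigcup_i V_i \Bigr| \le \sum_i |V_i| = \sum_i \frac{|\bar M|}{|M/M_i|} < |\bar M|,
\]
so $\bigcup_i M_i \ne M$.

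The hard part will be the case where some $R_i$ is infinite, forcing $\bar M$ to be infinite and breaking the direct counting argument. My plan is to partition the indices into $F := \{i : R_i \text{ finite}\}$ and its complement, apply the union-bound step to $F$ to obtain $y \in M$ escaping $M_i$ for $i \in F$, and then adjust $y$ by an element $z$ of $L := \bigcap_{i \in F} M_i$ so that $y + z$ also escapes each $M_j$ with $j \notin F$. Finding such a $z$ reduces to a prime-avoidance problem inside $L$ involving the cosets $(M_j - y) \cap L$: the infiniteness of each corresponding $R_j$ ensures the quotient $L/(L \cap M_j)$, a nonzero $R_j$-module whenever $L \not\subseteq M_j$, has room to avoid finitely many proper cosets, while edge cases $L \subseteq M_j$ are handled by absorbing $j$ into the first step via an auxiliary induction.
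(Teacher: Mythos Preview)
Your central estimate $|M/M_i| > n$ for every $i$ is correct and is indeed the heart of the matter, and your treatment of the case where every $M/M_i$ is finite is fine. The gap is entirely in the ``hard part''.

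First, the justification you give for why $L/(L\cap M_j)$ is large is wrong: a nonzero module over an infinite integral domain need not be infinite ($\Z/2\Z$ over $\Z$). What \emph{is} true in your situation is that each $M_i$ with $i\in F$ has finite index in $M$ (it is a finite-dimensional vector space over the finite field $R_i$), so $[M:L]<\infty$; since $[M:M_j]=\infty$ for $j\notin F$, the equality $[M:M_j]=[M:L+M_j]\cdot[L:L\cap M_j]$ with $[M:L+M_j]\le[M:L]<\infty$ forces $[L:L\cap M_j]=\infty$. (This also shows your ``edge case'' $L\subseteq M_j$ never occurs.)

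Second, and more seriously, even after you know each $[L:L\cap M_j]=\infty$, you still have to show that $L$ is not a finite union of cosets of infinite-index subgroups. This is exactly B.~H.~Neumann's lemma on coset covers of groups; it is not a triviality and does not follow from the kind of one-index-at-a-time ``room to avoid'' reasoning you sketch, because the cosets are taken with respect to \emph{different} subgroups $L\cap M_j$ and interact. In fact, once you have $|M/M_i|>n$ for all $i$, the cleanest route is to skip the partition altogether and invoke the quantitative form of Neumann's lemma directly on $M$: if $M=\bigcup_{i=1}^n M_i$ then some $[M:M_i]\le n$, contradicting your estimate. You should either cite this or supply the short proof (remove the infinite-index cosets via Neumann's argument, then pass to $M/\bigcap M_i$ and apply your finite union bound).

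For comparison, the paper takes a completely different route that avoids Neumann's lemma. It first reduces to the case where $R$ is a finitely generated $\Z$-algebra (via a carefully chosen map from a polynomial ring $\Z[t_1,\dots,t_l]\to R$ that preserves the condition $\dim R_i\ge 1$), and then exploits the fact that maximal ideals of such rings have finite residue fields: choosing $\Mm_i\supseteq\Mp_i$ and setting $\Mq=\prod\Mm_i^e$ for $e\gg 0$, the quotient $M'=M/\Mq M$ is \emph{finite}, the images $M_i'$ still satisfy $|M'/M_i'|>n$, and the elementary counting argument applies directly. Your approach, once patched with Neumann's lemma, is arguably more direct and stays inside the given ring $R$; the paper's approach trades this for an extra reduction step but keeps the final counting entirely elementary.
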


\begin{proof}[Sketch of the proof of Lemma~\ref{intro:lem:App-main}]
	We can reduce the proof to the case where $M$ is finitely generated $\Z$-algebra.
	In this case,
	for each $i \in \{1,2,\dots,n\}$,
	pick a maximal ideal $\Mm_i$ of $R$ 
	such that $\Mp_i \subseteq \Mm_i$.
	Let $e$ be a sufficiently large integer.
	Let $\Mq \coloneqq \prod_{i=1}^n \Mm_i ^e$,
	let $M' \coloneqq M/\Mq M$, and
	let $M'_i \coloneqq f(M_i)$, 
	where $f:M \to M'$ be the natural surjection.
	By Lemma~\ref{lem:App-4}~(\ref{enu:App-4-2}),
	it is enough to show that
	$\bigcup_{i=1}^n M'_i \neq M'$.
	Then we can show the following conditions:
	\begin{itemize}
		\item
			$\#M' < \infty$;
		\item
			$\#(M' / M'_i) > n$.
	\end{itemize}
	Thus we have 
	\[
	\#\left(\bigcup_{i=1}^n M'_i \right) 
	\le \sum_{i=1}^n \#M'_i 
	= \sum_{i=1}^n \frac{\#M'}{\#(M'/M'_i)}
	< n \cdot \frac{\#M'}{n} 
	= \#M' 
	< \infty,
	\]
	which implies
	$\bigcup_{i=1}^n M'_i \neq M'$.
	
\end{proof}

\begin{ack}
The auther would like to thank Professor Hiromu Tanaka
for many comments and discussions.
\end{ack}


\section{Preliminaries}

\subsection{Notation}\label{sec:Notation}

\begin{enumerate}
	\item
		We freely use the notation and terminology in \cite{Har}.
		In particular,
		for the definitions of {\em projective morphisms} 
		and {\em quasi-projective morphisms} of schemes,
		we refer to \cite{Har}.
	\item
		Let $S$ be a set.
		$\#S$ denotes the number of the elements of $S$
		if $S$ is a finite set.
		If $S$ is an infinite set (resp. a finite set),
		we write $\#S = \infty$ (resp. $\#S < \infty$).
	\item
		Given a ring $A$,
		$f:M \to N$ is an {\em $A$-homomorphism}
		if it is a homomorphism of $A$-modules.
	\item
		Let $A$ be an integral domain.
		Let $\alpha:\Z \to A$ be the natural ring homomorphism.
		For a prime number $p$,
		$A$ is {\em of characteristic $p$}
		if the kernel of $\alpha$
		is the ideal $(p)$.
		$A$ is {\em of positive characteristic}
		if $A$ is of characteristic $p$ for some prime number $p$.
		$A$ is {\em of characteristic $0$}
		if $\alpha$ is injective.
	\item
		Given a field extension $k \to k'$,
		$\transdeg_k\; k'$ denotes transcendence degree of $k'$ over $k$.
	\item
		Let $X$ be an integral scheme.
		Then $K(X)$ denotes $\MO_{X,\xi}$,
		where $\xi$ is the generic point of $X$.
		Let $A$ be an integral domain. 
		Then $K(A)$ denotes $K(\Spec\,A)$.
	\item
		Given a closed subscheme $Y$ of a scheme $X$,
		$\Supp(Y)$ denotes 
		$Y$ cosidered as a closed subset of $X$.
	\item
		Given a morphism $f:X \to Y$ of schemes
		and a closed subscheme $X'$ of $X$,
		$f|_{X'}$ denotes the induced morphism 
		$X' \to X \stackrel{f}{\to} Y$.
	\item \label{enu:scheme-theoretic-union}
		Let $X$ be a noetherian scheme.
		Let $\sequence{Y}{,}{n}$ 
		(resp. $Y_\lambda$ ($\lambda \in \Lambda$))
		be closed subschemes of $X$.
		Then 
		the {\em scheme-theoretic union} 
		(resp. {\em scheme-theoretic intersection}) of 
		$\sequence{Y}{,}{n}$ 
		(resp. $Y_\lambda$ ($\lambda \in \Lambda$))
		is the closed subscheme of $X$ determined by the coherent ideal sheaf
		$\bigcap_{i=1}^n \MI_i$ (resp. $\sum_{\lambda \in \Lambda} \MI_\lambda$),
		where $\MI_i$ (resp. $\MI_\lambda$) 
		is the ideal sheaf of $Y_i$ (resp. $Y_\lambda$).
		%
	\item
		The notation $\cap$ (resp. $\cup$, $\subseteq$)
		denotes 
		the set-theoretic intersection (resp. union, inclusion) 
		except for Section~\ref{sec:K}.
		Given closed subschemes $Y_1$ and $Y_2$ of a scheme $X$,
		we write $Y_1 = Y_2$ 
		if $Y_1$ is equal to $Y_2$
		as closed subschemes.
		If confusion may arise,
		we use the notation $\Supp(-)$
		clearly to show that the scheme structure is ignored.
		%
	\item 
		Given a proper morphism $f:X \to Y$ of schemes
		and a closed subscheme $X'$ of $X$,
		$f(X')$ denotes the scheme-theoretic image of $f|_{X'}$
		(cf. \cite[Tag 01R7]{SP}).
		If $X'$ is integral (resp. irreducible, reduced),
		then also $f(X')$ is integral (resp. irreducible, reduced).
	\item
		Let $f:X \to Y$ be a morphism of schemes
		and let $y \in Y$.
		Then $X_y$ is defined as $X_y \coloneqq X \times_Y \Spec\, k(y)$,
		where $k(y)$ is the residue field of $y$ on $Y$.
		Let $\MF$ be an $\MO_X$-module on $X$.
		Then $\MF|_{X_y}$ denotes $g^* \MF$, 
		where $g:X_y \to X$ is the induced morphism.
	\item
		Given a scheme $X$,
		a quasi-coherent sheaf $\MF$ on $X$,
		and a section $s \in H^0(X, \MF)$,
		$s(x)$ denotes the image of $s$ to $\MF_x/\Mm_x \MF_x$,
		where $\Mm_x$ is the maximal ideal of $\MO_{X,x}$. 
		%
	\item
		Let $X$ be a scheme.
		$X_\red$ denotes the reduced scheme associated to $X$ 
		(cf. \cite[Exercises~II.2.3]{Har}).
		The {\em nilradical ideal sheaf} of $X$ 
		is the ideal sheaf of $X_\red$ on $X$.
	\item
		Let $X$ be a reduced scheme  
		and let $Y$ be an irreducible component of $X$.
		Then we view
		$Y$ to be equipped with the reduced induced structure.
	\item
		Given a scheme $X$, 
		quasi-coherent sheaves $\MF$ and $\MG$,
		an $\MO_X$-submodule $\MF'$ of $\MF$, and 
		a morphism $\phi: \MF \to \MG$ of $\MO_X$-module,
		$\phi(\MF')$ denotes the image sheaf of 
		the restricted homomorphism $\phi|_{\MF'}: \MF' \to \MG$.
	\item
		Let $f:X \to S$ be a proper morphism of noetherian schemes,
		and let $L$ be an invertible sheaf on $X$.
		$L$ is said to be {\em $f$-free} 
		if the natural morphism $f^* f_* L \to L$ is surjective.
		$L$ is said to be {\em $f$-very ample}
		if $L$ is $f$-free and
		the induced morphism $X \to \mathbb{P}(f_* L)$ is 
		a closed immersion.
		$L$ is said to be {\em $f$-semi-ample} (resp. {\em $f$-ample})
		if there exists a positive integer $m \in \Z_{>0}$ such that
		$L^{\otimes m}$ is $f$-free (resp. $f$-very ample).
		$L$ is said to be {\em $f$-nef} 
		if $L|_{X_s}$ is nef for any closed point $s \in S$.
		Then $L$ is $f$-nef 
		if and only if
		$L|_{X_s}$ is nef for any point $s \in S$ (cf. \cite[Lemma~2.6]{CT}).
		If $L$ is $f$-free (resp. $f$-very ample, etc.),
		we may simply say 
		$L$ is relatively free (resp. relatively very ample, etc.)
		or 
		$L$ is free (resp. very ample, etc.) over $S$.
	\item \label{enu:dim}
		Let $X$ be a scheme.
		$\dim X$ denotes the dimension of $X$.
		For the definition of the dimension of $X$,
		we refer to \cite{Har}
		if $X$ is not empty.
		If $X$ is empty,
		we define $\dim X = -\infty$.
	\item
		The phrase 
		``{\em \ldots\  for sufficiently large $m \in \Z_{>0}$}''
		means
		``there is an integer $m_0 \in \Z_{>0}$ 
		such that
		\ldots\ 
		for any $m \in \Z_{>0}$ with $m \ge m_0$''.
		The phrase 
		``{\em \ldots\  for sufficiently divisible $m \in \Z_{>0}$}''
		means
		``there is an integer $m_0 \in \Z_{>0}$ 
		such that
		\ldots\ 
		for any $m \in \{ m_0 k \mid k \in \Z_{>0}\}$''.
	
\end{enumerate}

\subsection{Relative dimension}\label{sec:RD}


\begin{dfn}
	Let $f:X \to S$ be a morphism of noetherian schemes,
	with $S$ irreducible.
	Let $\sigma$ be the generic point of $S$.
	Then, $\dim(X/S)$ denotes 
	the dimension of $X_\sigma$
	(cf. Subsection~\ref{sec:Notation}~(\ref{enu:dim})).
	In particular,
	if $X_\sigma$ is empty,
	then $\dim(X/S) = -\infty$.
\end{dfn}

\begin{rem}\label{RD-1}
	If $f$ is of finite type,
	then $X_\sigma$ is of finite type over the field $K(S_{\red})$,
	and hence $\dim(X/S) = \dim X_\sigma < \infty$ holds.
\end{rem}

\begin{prop}\label{RD-2}
	Let $f:X \to S$ be a morphism of noetherian schemes,
	with $S$ irreducible.
	Then, we have 
	$\dim(X/S) = \dim(X_{\red}/S_{\red})$.
\end{prop}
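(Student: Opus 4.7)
The plan is to show that the generic fibres $X_\sigma$ and $(X_\red)_\sigma$ share the same underlying topological space; since the Krull dimension of a scheme depends only on its topology, this immediately yields the desired equality. First I would check that the generic point $\sigma$ of $S$ is also the generic point of $S_\red$ and that $k(\sigma)$ is the same residue field on both schemes. This follows because $S_\red \hookrightarrow S$ is a homeomorphism of underlying spaces, and because $\MO_{S,\sigma}$ is Artinian local (as $S$ is irreducible noetherian), so $\MO_{S_\red,\sigma} = \MO_{S,\sigma}/\mathrm{nilrad}(\MO_{S,\sigma}) = k(\sigma)$. In particular, $\Spec k(\sigma) \to S$ factors through $S_\red$, whence
\[
(X_\red)_\sigma \;=\; X_\red \times_{S_\red} \Spec k(\sigma) \;=\; X_\red \times_S \Spec k(\sigma),
\]
and base-changing the closed immersion $X_\red \hookrightarrow X$ along $\Spec k(\sigma) \to S$ produces a closed immersion $(X_\red)_\sigma \hookrightarrow X_\sigma$.

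Next I would verify that this closed immersion is surjective on underlying topological spaces. Working affine-locally with $X = \Spec B$, $S = \Spec A$, and $N \subseteq B$ the nilradical, the map becomes $\Spec((B/N) \otimes_A k(\sigma)) \to \Spec(B \otimes_A k(\sigma))$, and the kernel of the corresponding surjection of rings is the ideal $J$ of $B \otimes_A k(\sigma)$ generated by the image of $N$. Because every element of $N$ is nilpotent and because the set of nilpotent elements of a commutative ring is an ideal, $J$ consists entirely of nilpotents; hence every prime of $B \otimes_A k(\sigma)$ contains $J$, so the map of spectra is bijective. Combined with the closed immersion being a homeomorphism onto its image, we conclude that $(X_\red)_\sigma \to X_\sigma$ is a homeomorphism of topological spaces, so
\[
\dim X_\sigma \;=\; \dim (X_\red)_\sigma,
\]
which is exactly $\dim(X/S) = \dim(X_\red/S_\red)$.

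The edge case $X_\sigma = \emptyset$ is handled by the convention $\dim \emptyset = -\infty$ from Subsection~\ref{sec:Notation}~(\ref{enu:dim}), together with the observation that $X_\sigma = \emptyset$ if and only if $(X_\red)_\sigma = \emptyset$ (since the two have the same underlying set, namely $f^{-1}(\sigma)$). The one step requiring any care is the surjectivity of $(X_\red)_\sigma \to X_\sigma$ on points; everything else is bookkeeping about fibre products and the identification of the generic point and residue field of $S$ versus $S_\red$. I do not expect a serious obstacle, as the argument ultimately reduces to the standard fact that the ideal generated by a set of nilpotents in a commutative ring is itself a nil ideal.
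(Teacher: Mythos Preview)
Your argument is correct and follows essentially the same approach as the paper: the paper simply records the factorisation $\Spec k(\sigma)\to S_{\red}\to S$ in a commutative diagram and declares the result, leaving the details implicit. You have correctly supplied the point the paper glosses over, namely that $(X_{\red})_{\sigma}\hookrightarrow X_{\sigma}$ is a closed immersion defined by a nil ideal and hence a homeomorphism, so the two fibres have equal dimension.
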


\begin{proof}
	Let $\sigma$ be the generic point of $S$.
	The assertion follows from the following commutative diagram:
	\[
	\begin{CD}
		X_\sigma 	@>>>	X_\red 	@>>>	X 			\\
		@VVV						@VVV					@VVV  \\
		S_\sigma 	@>>>	S_\red 	@>>>	S.
	\end{CD}
	\]
\end{proof}

\begin{prop}\label{RD-<}
	Let $f:X \to S$ be a morphism 
	of irreducible noetherian schemes.
	Let $Y$ be a closed subscheme of $X$
	such that $\Supp(Y) \subsetneq \Supp(X)$.
	Then, we have 
	$\dim(Y/S) \le \dim(X/S) - 1$.
\end{prop}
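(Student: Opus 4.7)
The plan is to reduce to the integral case and then analyze the generic fiber directly. By Proposition~\ref{RD-2}, we may replace $X$ and $S$ by $X_\red$ and $S_\red$; since irreducibility, the strict containment of supports, and the dimension invariants $\dim(X/S)$ and $\dim(Y/S)$ are all preserved by this reduction, we may assume $X$ and $S$ are integral. Let $\xi$ and $\sigma$ denote the generic points of $X$ and $S$, respectively, so that $\dim(X/S) = \dim X_\sigma$ and $\dim(Y/S) = \dim Y_\sigma$, and the hypothesis $\Supp(Y) \subsetneq \Supp(X)$ becomes $\xi \notin Y$.

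The argument then splits on whether $f(\xi) = \sigma$. In the non-dominant case $f(\xi) \neq \sigma$, continuity of $f$ gives $f(X) = f(\overline{\{\xi\}}) \subseteq \overline{\{f(\xi)\}}$, and since $f(\xi) \neq \sigma$ this latter closed set is a proper closed subset of the irreducible scheme $S$, which therefore cannot contain the generic point $\sigma$. Hence $X_\sigma$, and a fortiori $Y_\sigma$, is empty, so both $\dim(X/S)$ and $\dim(Y/S)$ equal $-\infty$, and the inequality holds under the convention $-\infty - 1 = -\infty$.

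In the dominant case $f(\xi) = \sigma$, I would observe that $\xi \in X_\sigma$ and every point of $X_\sigma$, viewed as a point of $X$, lies in $\overline{\{\xi\}} = X$; hence $\xi$ is dense in $X_\sigma$, making $X_\sigma$ irreducible with generic point $\xi$. Since $\xi \notin Y$, we have $\xi \in X_\sigma \setminus Y_\sigma$, so $Y_\sigma$ is a proper closed subset of $X_\sigma$. The proof then concludes by the purely topological fact that a proper closed subset of an irreducible space has strictly smaller Krull dimension: extending any finite chain of irreducible closed subsets in $Y_\sigma$ by $X_\sigma$ itself produces a chain one longer in $X_\sigma$, giving $\dim Y_\sigma \le \dim X_\sigma - 1$ when the latter is finite, while the case $\dim X_\sigma = \infty$ is trivial.

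The only real point of care is the non-dominant case: one must derive $X_\sigma = \emptyset$ purely from continuity and the irreducibility of $S$, without tacitly invoking finite-type hypotheses, which are not assumed in the statement. Everything else is a routine topological argument once the problem has been transported to the generic fiber.
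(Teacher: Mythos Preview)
Your proof is correct and reaches the same conclusion as the paper, but via a cleaner, purely topological route. The paper reduces to the affine case and then proves an algebraic claim (that for an injection $A \hookrightarrow B$ of domains and a proper surjection $B \twoheadrightarrow C$, the induced map $B \otimes_A K(A) \to C \otimes_A K(A)$ is not injective) in order to deduce $Y_\sigma \neq X_\sigma$. You bypass both the affine reduction and this algebraic lemma by observing directly that the generic point $\xi$ of $X$ lies in $X_\sigma$ (because $f(\xi)=\sigma$ in the dominant case) but not in $Y$ (because $\Supp(Y)\subsetneq\Supp(X)$), so $Y_\sigma$ misses $\xi$ and is therefore a proper closed subset of the irreducible space $X_\sigma$. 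Both proofs finish with the same topological fact that a proper closed subset of an irreducible space has strictly smaller Krull dimension. Your argument is shorter and more transparent; the paper's argument has the minor expository virtue of making the algebraic content explicit, which fits the ring-theoretic flavor of the later generic-length estimates, but there is no logical advantage.
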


\begin{proof}
	By Proposition~\ref{RD-2},
	we may assume that $X$ and $S$ are reduced.
	Thus they are integral.
	If $f$ is not dominant,
	the assertion clearly holds.
	Thus, we may assume that $f$ is dominant.
	We may also assume that $X$ and $S$ are affine,
	by replacing them with their open affine subschemes.
	Note that also $Y$ is affine.
	%
	If $\dim(X/S) = \pm \infty$,
	the assertion is clear. 
	Thus we may assume that 
	$0 \le \dim(X/S) < \infty$.
	We now show the following claim.
	
	\begin{claim}\label{claim:RD-1}
	Let $\phi: A \to B$ be an injective ring homomorphism 
	of integral domains, and
	let $\psi: B \to C$ be a surjective ring homomorphism.
	If $\psi$ is not injective,
	then the induced ring homomorphism 
	\[
	\psi \otimes_{A} \id_{K(A)} : 
	B \otimes_{A} K(A) \to C \otimes_{A} K(A)
	\]
	is not injective.
	\end{claim}
	
	\begin{proof} [Proof of Claim~\ref{claim:RD-1}]
	Assume that $\psi$ is not injective.
	Let $I \coloneqq \Ker\, \psi \neq 0$.
	Then, we have the exact sequence
	\[
	0
	\to I (B \otimes_{A} K(A))
	\to B \otimes_{A} K(A)
	\to C \otimes_{A} K(A)
	\to 0.
	\]
	Since the induced ring homomorphism $\alpha:B \to B \otimes_{A} K(A)$
	is injective,
	we have 
	\[
	I (B \otimes_{A} K(A))
	\supseteq \alpha(I)
	\neq 0.
	\]
	Thus, the claim holds.
	\end{proof}
	
	Then, by applying Claim~\ref{claim:RD-1} to 
	$A  = \Gamma(S, \MO_S)$, 
	$B  = \Gamma(X, \MO_X)$,
	and $C = \Gamma(Y, \MO_Y)$,
	we have 
	$Y_\sigma \neq X_\sigma$.
	Since $X_\sigma$ is integral and 
	$Y_\sigma$ is a proper closed subscheme of $X_\sigma$,
	we have $\dim Y_\sigma \le \dim X_\sigma - 1$
	if $\dim X_\sigma \ge 1$.
	If $\dim X_\sigma = 0$, 
	then $Y_\sigma$ is empty for the same reason,
	and $\dim Y_\sigma \le \dim X_\sigma - 1$ holds.
\end{proof}

\subsection{Generic length}

\begin{dfn}\label{def:glen}
	\begin{enumerate}
		\item \label{enu:glen-alg}
			Let $A$ be a noetherian ring which has only one minimal prime ideal $\Mp$
			and let $M$ be a finitely generated $A$-module.
			Then, 
			we define the {\em generic $A$-length} $\glen_A\, M$ of $M$ as
			the length of $A_\Mp$-module $M_\Mp$.
		\item
			Let $X$ be an irreducible noetherian scheme
			and let $\MF$ be a coherent sheaf on $X$.
			Let $\xi \in X$ be the generic point of $X$.
			Then, 
			we define the {\em generic $X$-length} $\glen_X\, \MF$ of $\MF$ as
			the length of $\MO_{X, \xi}$-module $\MF_\xi$.
	\end{enumerate}
	
\end{dfn}

\begin{rem}\label{rem:GR-1}
	Let $A$, $\Mp$, and $M$ be as in Definition~\ref{def:glen}~(\ref{enu:glen-alg}).
	\begin{enumerate}
		\item
			Since $A_\Mp$ is artinian,
			$M_\Mp$ has finite length
			(cf. \cite[\S~3]{Mat}),
			i.e., $\glen_A\, M$ is a non-negative integer.
		\item
			Given an exact sequence of $A$-modules
			\[
			0 \to 
			\sequence{M}{\to}{n}
			\to 0,
			\]
			we have $\sum_{i=1}^n (-1)^i \glen_A\, M_i = 0$ 
			(cf. \cite[\S~2]{Mat}).
		\item
			If $\Mp M = 0$,
			$M$ is also an $A/\Mp$-module and
			we have 
			\[
			\glen_A\, M 
			= \glen_{A/\Mp}\, M.
			\]
		\item
			If $A$ is an integral domain,
			we have 
			$\glen_{A}\, M = \dim_{K(A)} (M \otimes_A K(A))$.
	\end{enumerate}
\end{rem}

\begin{rem} \label{rem:GR-2}
	Let $f:X \to S$ be a projective morphism 
	of noetherian schemes, with $S$ irreducible.
	Let $\MF$ be a coherent sheaf on $X$.
	Let $\sigma$ be the generic point of $S$,
	let $X_0 \coloneqq X \times_S \Spec\, \MO_{S, \sigma}$, and
	let $\MF_0 \coloneqq j^* \MF$, 
	where $j: X_0 \to X$ is the induced morphism. 
	Then, by \cite[Proposition~III.9.3]{Har},
	$\glen_S\, f_*\MF 
	= \glen_{\MO_{S,\sigma}}\, H^0(X_0,\MF_0)$ holds.
	If $S$ is affine, 
	we have 
	$\glen_S\, f_*\MF = \glen_A\, H^0(X,\MF)$,
	where $S = \Spec\, A$.
\end{rem}

\begin{rem}\label{GR-affineness}
	Let $f:X \to S$ be a projective morphism of noetherian schemes with $S$ irreducible, and
	let $\MF$ be a coherent sheaf on $X$.
	Let $S'$ be a non-empty open subscheme of $S$,
	let $X' = X \times_S S'$, and
	let $g:X' \to S'$ be the induced morphism.
	Then,
	we have
	$
	\glen_S\, f_* \MF = \glen_{S'}\, g_*(\MF|_{X'}).
	$
\end{rem}

\subsection{Associated points}


\begin{dfn}
	Let $A$ be a ring,
	let $M$ be an $A$-module, and
	let $\Mp \in \Spec\, A$.
	We say that 
	$\Mp$ is {\em associated to} $M$
	or $\Mp$ is an {\em associated prime} of $M$
	if there is an element $m \in M$ 
	whose annihilator $\{a \in A \mid am = 0 \}$ is equal to $\Mp$.
	$\Ass\, M$ denotes the set of all the associated primes of $M$.
\end{dfn}

\begin{dfn}
	Let $X$ be a scheme,
	let $\MF$ be a quasi-coherent sheaf on $X$,
	and let $x \in X$.
	\begin{enumerate}
		\item
			We say that 
			$x$ is {\em associated to} $\MF$
			or $x$ is an {\em associated point} of $\MF$
			if the maximal ideal of $\MO_{X,x}$ is associated to $\MO_{X,x}$-module $\MF_x$.
			We simply say $x$ is {\em associated to} $X$ 
			or $x$ is an {\em associated point} of $X$
			if $x$ is associated to $\MO_X$.
		\item
			$\Ass\, \MF$ denotes the set of all the associated points of $\MF$
			and $\Ass\, X$ denotes $\Ass\, \MO_X$.
	\end{enumerate}
\end{dfn}

\begin{prop}\label{prop:AP-1}
	Let $X$ be a locally noetherian scheme 
	and let $\MF$ be a quasi-coherent sheaf on $X$.
	Let $U = \Spec\, A \subseteq X$ be an open affine subscheme
	and let $M= H^0(U,\MF|_U)$.
	Let $x \in U$, 
	and let $\Mp \in \Spec\, A$ be the corresponding prime ideal.
	Then, $\Mp$ is associated to $M$ 
	if and only if $x$ is associated to $\MF$.
\end{prop}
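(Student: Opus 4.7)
The plan is to reduce the proposition to a purely commutative-algebra statement about associated primes under localization, and then dispatch that via direct manipulation. Since $\MF|_U$ is the quasi-coherent sheaf associated to the $A$-module $M$, we have canonical identifications $\MO_{X,x} = A_\Mp$ and $\MF_x = M_\Mp$. By the definition of associated point, $x \in \Ass\,\MF$ if and only if the maximal ideal $\Mp A_\Mp$ of $A_\Mp$ is associated to the $A_\Mp$-module $M_\Mp$. So the statement becomes: for a noetherian ring $A$, a prime $\Mp \in \Spec\,A$, and an $A$-module $M$,
\[
\Mp \in \Ass_A\,M \iff \Mp A_\Mp \in \Ass_{A_\Mp}\,M_\Mp.
\]

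For the forward direction, suppose $\Mp = \{a \in A : am = 0\}$ for some $m \in M$. I would check directly that $\Mp A_\Mp \subseteq \mathrm{Ann}_{A_\Mp}(m/1)$, and conversely if $(a/s)(m/1) = 0$ in $M_\Mp$, then there is $t \in A \setminus \Mp$ with $tam = 0$ in $M$, so $ta \in \Mp$, whence $a \in \Mp$ because $t \notin \Mp$ and $\Mp$ is prime.

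For the reverse direction, suppose $\Mp A_\Mp = \mathrm{Ann}_{A_\Mp}(m/s)$. Replacing $m$ by a unit multiple I may assume $s = 1$. Here is where I would use the noetherian hypothesis: write $\Mp = (a_1,\dots,a_n)$, so $a_i m/1 = 0$ in $M_\Mp$ gives $t_i \in A \setminus \Mp$ with $t_i a_i m = 0$; taking $t = t_1 \cdots t_n \in A \setminus \Mp$ yields $t\Mp m = 0$, i.e.\ $\Mp \subseteq \mathrm{Ann}_A(tm)$. For the opposite containment, if $b \in A$ satisfies $btm = 0$, then $bt/1 \in \mathrm{Ann}_{A_\Mp}(m/1) = \Mp A_\Mp$, so $ubt \in \Mp$ for some $u \in A \setminus \Mp$; since $u, t \notin \Mp$ and $\Mp$ is prime, $b \in \Mp$. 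Hence $\mathrm{Ann}_A(tm) = \Mp$, showing $\Mp \in \Ass_A\,M$.

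The argument is essentially routine; the only substantive point is to invoke noetherianness precisely where it is needed, namely to reduce the infinitely many clearing-of-denominators operations in the reverse direction to a single $t$ via finite generation of $\Mp$. Everything else is a matter of carefully tracking the identifications between $(M,A,\Mp)$ and the local data $(M_\Mp, A_\Mp, \Mp A_\Mp)$.
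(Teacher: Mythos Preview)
Your argument is correct. The paper does not give a proof at all but simply cites \cite[Tag~02OK]{SP}; your write-up is essentially the standard argument underlying that citation, so there is no substantive difference in approach to compare.
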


\begin{proof}
	See \cite[Tag~02OK]{SP}.
\end{proof}

\begin{prop}
	Let $X$ be a noetherian scheme, and
	let $\MF$ be a coherent sheaf on $X$.
	Then, $\Ass\,\MF$ is a finite set.
\end{prop}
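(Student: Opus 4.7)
The plan is to reduce the global statement to the standard commutative algebra fact that a finitely generated module over a noetherian ring has only finitely many associated primes, and then to glue over a finite affine cover.

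First, since $X$ is noetherian, it is quasi-compact, so I can choose a finite open affine cover $X = \bigcup_{i=1}^n U_i$ with $U_i = \Spec A_i$. Because $\MF$ is coherent on the locally noetherian scheme $X$, the restriction $\MF|_{U_i}$ corresponds to a finitely generated $A_i$-module $M_i = H^0(U_i, \MF|_{U_i})$, and each $A_i$ is noetherian. By Proposition~\ref{prop:AP-1}, the points of $\Ass\,\MF$ lying in $U_i$ correspond bijectively to the elements of $\Ass_{A_i}\,M_i$. Since $\Ass\,\MF = \bigcup_{i=1}^n (\Ass\,\MF \cap U_i)$, it suffices to show that each $\Ass_{A_i}\,M_i$ is finite.

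For this affine step, I would invoke (or reprove) the classical fact: if $A$ is a noetherian ring and $M$ is a finitely generated $A$-module, then $\Ass_A\,M$ is finite. The standard argument is to build a finite filtration
\[
0 = N_0 \subsetneq N_1 \subsetneq \cdots \subsetneq N_r = M
\]
with $N_j / N_{j-1} \cong A/\Mq_j$ for some primes $\Mq_j \in \Spec\,A$ (such a filtration exists since $M$ is noetherian: repeatedly pick an associated prime of $M/N_{j-1}$ and lift a generator of $A/\Mq_j$ into $M/N_{j-1}$, and the process terminates by the ascending chain condition). From the short exact sequences $0 \to N_{j-1} \to N_j \to A/\Mq_j \to 0$ one obtains $\Ass_A\,N_j \subseteq \Ass_A\,N_{j-1} \cup \Ass_A(A/\Mq_j) = \Ass_A\,N_{j-1} \cup \{\Mq_j\}$, so by induction $\Ass_A\,M \subseteq \{\Mq_1, \dots, \Mq_r\}$, which is finite.

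There is no real obstacle here; the only points demanding care are the verification that $M_i$ is finitely generated (which follows from coherence and noetherianity of $A_i$) and the reduction from schematic associated points to algebraic associated primes, which is exactly the content of Proposition~\ref{prop:AP-1}. Combining the two steps, $\Ass\,\MF$ is expressed as a finite union of finite sets, hence is itself finite.
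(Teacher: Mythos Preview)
Your argument is correct and is precisely the standard proof of this fact. The paper itself does not give an argument here: it simply cites \cite[Tag~05AF]{SP}. Your write-up supplies the details behind that citation (finite affine cover, Proposition~\ref{prop:AP-1} to pass to the affine case, and the filtration $0 = N_0 \subsetneq \cdots \subsetneq N_r = M$ with quotients $A/\Mq_j$), so there is no meaningful difference in approach---you have just unpacked the reference.
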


\begin{proof}
	See \cite[Tag~05AF]{SP}.
\end{proof}

\begin{prop}\label{prop:AP-2}
	Let $X$ be a locally noetherian scheme 
	and let $\MF$ be a quasi-coherent sheaf on $X$.
	\begin{enumerate}
		\item
			$\Ass\, \MF \subseteq \Supp\, \MF$ holds.
		\item
			Let $x \in \Supp\, \MF$ be a point 
			which is not a specialization of another point of $\Supp\, \MF$.
			Then, $x$ is associated to $\MF$.
			In particular, 
			all the generic points of the irreducible components of $X_\red$ are associated to $X$.
	\end{enumerate}
\end{prop}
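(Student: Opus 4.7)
The plan is to dispatch (1) directly from the definitions, then prove (2) by reducing to a local algebraic statement on rings and modules, from which the parenthetical claim about generic points of $X_\red$ follows by specializing to $\MF = \MO_X$. For (1), if $x \in \Ass \MF$, then by definition the maximal ideal $\Mm_x \subseteq \MO_{X,x}$ equals the annihilator of some element $s \in \MF_x$; such an $s$ must be nonzero, so $\MF_x \neq 0$, whence $x \in \Supp \MF$.

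For (2), I would use Proposition~\ref{prop:AP-1} to reduce to an algebraic question: pick an open affine $U = \Spec A$ containing $x$, let $\Mp \in \Spec A$ correspond to $x$, and set $M := H^0(U, \MF|_U)$, so the goal becomes $\Mp \in \Ass M$. Since $\overline{\{y\}}^X \cap U = \overline{\{y\}}^U$ for every $y \in U$, the hypothesis on $x$ translates to the purely algebraic condition that $\Mp$ is minimal among primes of $A$ lying in $\Supp M$. After localizing at $\Mp$, $M_\Mp$ is therefore a nonzero module over the noetherian local ring $A_\Mp$ whose support is exactly $\{\Mp A_\Mp\}$, so the radical of the annihilator of any nonzero $\bar m \in M_\Mp$ equals $\Mp A_\Mp$; since $\Mp A_\Mp$ is finitely generated, there is a smallest $k \geq 1$ with $\Mp^k A_\Mp \cdot \bar m = 0$, and any $p \in \Mp^{k-1} A_\Mp$ with $p\bar m \neq 0$ produces a nonzero element of $N_\Mp := \{z \in M_\Mp : \Mp A_\Mp \cdot z = 0\}$.

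To transfer this to an honest associated prime of $M$, I would introduce $N := \{m \in M : \Mp m = 0\} = \Hom_A(A/\Mp, M)$. Because $A$ is noetherian, $A/\Mp$ is finitely presented, so $\Hom$ commutes with localization at $\Mp$: $N_\Mp \simeq \Hom_{A_\Mp}(A_\Mp/\Mp A_\Mp, M_\Mp)$ coincides with the set constructed above and is therefore nonzero. Viewing $N$ as a module over the domain $R := A/\Mp$ with fraction field $K(R) = A_\Mp/\Mp A_\Mp$, the identification $N_\Mp \simeq N \otimes_R K(R)$ shows that $N$ is not $R$-torsion, so some $n \in N$ satisfies $\operatorname{Ann}_R n = 0$, equivalently $\operatorname{Ann}_A n = \Mp$, giving $\Mp \in \Ass M$. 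The final assertion follows by applying (2) to $\MF = \MO_X$, since a generic point of an irreducible component of $X_\red$ is a point of $\Supp \MO_X = X$ that is not a specialization of any other point. The main obstacle is precisely this last transfer step: the quasi-coherent sheaf $\MF$ is not assumed coherent, so $M$ need not be finitely generated and the standard finite-generation tricks for $\Ass$ are unavailable; it is the reformulation $N = \Hom_A(A/\Mp, M)$ together with the noetherian hypothesis on $A$ that makes the localization argument go through.
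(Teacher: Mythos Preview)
Your proof is correct. The paper does not actually prove this proposition: it simply cites the Stacks Project (Tags 05AD and 05AH). Your argument therefore supplies what the paper omits. The route you take for (2)---reducing to the affine case via Proposition~\ref{prop:AP-1}, producing a nonzero element killed by $\Mp A_\Mp$ in $M_\Mp$ by a nilpotence argument, and then transferring this back to $M$ by writing $N = \Hom_A(A/\Mp, M)$ and using that $A/\Mp$ is finitely presented so that $\Hom$ commutes with localization---is precisely the standard argument behind the cited Stacks Project tags, with the virtue that you make explicit why the quasi-coherent (rather than coherent) hypothesis suffices: the noetherian hypothesis lives on the ring $A$, not on the module $M$.
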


\begin{proof}
	See \cite[Tag~05AD]{SP}
	and \cite[Tag~05AH]{SP}.
\end{proof}

\subsection{Zero loci}


The results in this subsection might be known to experts.
However, the auther could not have found appropriate references.
We include this subsection for the sake of completeness.

\begin{dfn}\label{def:ZL-1}
	Let $X$ be a noetherian scheme, 
	let $L$ be an invertible sheaf on $X$, and
	let $s \in H^0(X,L)$.
	Let $\phi: \MO_X \to L$ be the morphism of $\MO_X$-modules 
	such that 
	$\phi(U) (1) = s|_U$ holds
	for any open subset $U \subseteq X$.
	Let $\phi':L^{-1} \to \MO_X$ be the composition 
	\[
	L^{-1} 
	\stackrel{\simeq}{\to} L^{-1} \otimes_{\MO_X} \MO_X
	\stackrel{\id_{L^{-1}} \otimes \phi}{\longrightarrow} L^{-1} \otimes_{\MO_X} L
	\stackrel{\simeq}{\to} \MO_X.
	\]
	Let $\MI$ be the image sheaf of $\phi'$.
	Then, the {\em zero locus} $Z(s)$ of $s$ is defined as 
	the closed subscheme of $X$ 
	determined by the coherent ideal sheaf $\MI$.
\end{dfn}

\begin{prop}\label{prop:ZL-2}
	Let $X$ be a noetherian scheme, 
	let $L$ be an invertible sheaf on $X$, and
	let $s \in H^0(X,L)$.
	Then, 
	\[
	\Supp(Z(s)) = \{x \in X \mid s(x) = 0 \}
	\] 
	holds.
	In particular, we have 
	$\Supp(Z(s)) = \Supp(Z(s|_{X_{\red}}))$.
\end{prop}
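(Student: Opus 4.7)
The plan is to reduce the whole statement to the standard computation on $\Spec A$ by working locally on a trivializing cover for $L$. First I would cover $X$ by open affines $U_i = \Spec A_i$ over each of which $L$ admits a trivialization $\psi_i : \MO_{U_i} \stackrel{\sim}{\to} L|_{U_i}$. Through $\psi_i$, the section $s|_{U_i}$ corresponds to an element $a_i \in A_i$, and unwinding Definition~\ref{def:ZL-1} shows that the composite $\phi'|_{U_i}$, after identifying $L^{-1}|_{U_i}$ with $\MO_{U_i}$ via $\psi_i^{-1}$, is the multiplication map $\MO_{U_i} \to \MO_{U_i}$ sending $1$ to $a_i$. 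Consequently $\MI|_{U_i}$ is the ideal sheaf associated to $(a_i) \subseteq A_i$, and $Z(s) \cap U_i = V(a_i)$ as closed subschemes of $U_i$.

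With this local description the first statement becomes immediate. For $x \in U_i$ corresponding to a prime $\Mp \subseteq A_i$, one has $x \in \Supp(Z(s))$ if and only if $\MI_x \neq \MO_{X,x}$, equivalently $a_i \in \Mp$. On the other hand, $s(x)$ is the image of $s$ in the one-dimensional $k(x)$-vector space $L_x / \Mm_x L_x$; via $\psi_i$ this image is identified with the class of $a_i$ in $k(x) = A_{i,\Mp}/\Mp A_{i,\Mp}$, which vanishes if and only if $a_i \in \Mp$. The two conditions agree, establishing the set-theoretic equality $\Supp(Z(s)) = \{x \in X \mid s(x) = 0\}$.

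For the ``in particular'' part, I would use that the closed immersion $X_\red \hookrightarrow X$ is a homeomorphism on underlying topological spaces and induces an isomorphism on residue fields $k(x)$ at every point $x$. Compatibility of pullback with evaluation shows that under the induced isomorphism $L_x \otimes_{\MO_{X,x}} k(x) \simeq (L|_{X_\red})_x \otimes_{\MO_{X_\red,x}} k(x)$, the element $s(x)$ maps to $(s|_{X_\red})(x)$; hence $s(x) = 0$ if and only if $(s|_{X_\red})(x) = 0$. Applying the first statement to both $s$ and $s|_{X_\red}$ then yields $\Supp(Z(s)) = \Supp(Z(s|_{X_\red}))$.

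I do not expect a genuine obstacle here; the only delicate point is bookkeeping — namely carefully unwinding the composite $\phi'$ in Definition~\ref{def:ZL-1} through $\psi_i$ to confirm that it really becomes multiplication by $a_i$, together with noting that formation of the image sheaf and of residue fields both commute with restriction to the open subschemes $U_i$. Each of these is standard and local on $X$.
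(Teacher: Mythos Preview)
Your proposal is correct and follows essentially the same approach as the paper: both arguments unwind Definition~\ref{def:ZL-1} locally to see that $x \in Z(s)$ is equivalent to $s_x \in \Mm_x L_x$. The paper does this directly at stalks without first choosing affine trivializations, which makes it a bit shorter, but the underlying computation is the same; your treatment of the ``in particular'' clause via residue-field compatibility along $X_\red \hookrightarrow X$ is also correct and just spells out what the paper leaves implicit.
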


\begin{proof}
	Let $\phi,\phi',\MI$ be as in Definition~\ref{def:ZL-1}.
	Let $x \in X$.
	The condition $x \in Z(s)$ is equivalent to the condition $\MI_x \subseteq \Mm_x$, 
	where $\Mm_x$ is the maximal ideal of $\MO_{X,x}$.
	Since the image sheaf of $\phi$ is $\MI L$, 
	$s_x$ generates $\MI_x L_x$ as an $\MO_{X,x}$-module.
	Thus, the condition $\MI_x \subseteq \Mm_x$ 
	is equivalent to 
	the condition $s_x \in \Mm_x L_x$,
	i.e., $s(x) = 0$.
\end{proof}

\begin{prop}\label{prop:ZL-3}
	Let $X$ be a reduced noetherian scheme,
	let $L$ be an invertible sheaf on $X$, and
	let $s \in H^0(X,L)$.
	Then, the following conditions are equivalent:
	\begin{enumerate}
		\item
			$s=0$; 
		\item
			$s(x) = 0$ for all $x \in X$;
		\item
			$\Supp(Z(s)) = \Supp(X)$;
		\item
			$Z(s) = X$.
	\end{enumerate}
\end{prop}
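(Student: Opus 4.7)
The plan is to establish a short cycle of implications: $(1) \Rightarrow (2) \Leftrightarrow (3) \Rightarrow (1)$, together with the separate equivalence $(1) \Leftrightarrow (4)$ drawn directly from the construction of $Z(s)$. Three of these arrows are essentially formal; the only substantive step is $(3) \Rightarrow (1)$, which will be where the reducedness hypothesis enters.

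First I would dispatch the formal implications. The implication $(1) \Rightarrow (2)$ is immediate, since the image of the zero section in any fiber $L_x / \Mm_x L_x$ vanishes. The equivalence $(2) \Leftrightarrow (3)$ is a direct restatement of Proposition~\ref{prop:ZL-2}: its conclusion identifies $\Supp(Z(s))$ with the set of points where $s(x) = 0$, and this set equals $\Supp(X) = X$ exactly when $s$ vanishes at every point. For $(1) \Leftrightarrow (4)$, I would unwind the construction of $Z(s)$: the defining ideal sheaf $\MI$ is the image of $\phi' : L^{-1} \to \MO_X$, and under the canonical isomorphism $L^{-1} \otimes_{\MO_X} L \simeq \MO_X$ the map $\phi'$ is identified with $\id_{L^{-1}} \otimes \phi$, where $\phi$ corresponds to $s$. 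Hence $\phi' = 0$ iff $\phi = 0$ iff $s = 0$, which translates to $\MI = 0$ and hence $Z(s) = X$ as closed subschemes. The implication $(4) \Rightarrow (3)$ is then trivial, since equal closed subschemes have equal supports.

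To close the loop I need $(3) \Rightarrow (1)$, and this is where reducedness will be used. I would argue locally: choose an affine open cover $\{U_i = \Spec A_i\}$ of $X$ on which $L$ is trivialized, so that $s|_{U_i}$ corresponds to an element $a_i \in A_i$. Combining (3) with Proposition~\ref{prop:ZL-2} gives $a_i \in \Mp$ for every prime $\Mp \in \Spec A_i$, hence $a_i$ lies in the nilradical of $A_i$; since $X$ is reduced, each $A_i$ is reduced, so $a_i = 0$. As this holds on every member of the cover, $s = 0$. I do not expect any real obstacle here: the whole statement is essentially bookkeeping around Proposition~\ref{prop:ZL-2} and the definition of $Z(s)$, with reducedness entering only at this last, very short, step.
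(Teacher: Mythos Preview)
Your proof is correct and essentially matches the paper's argument. The only difference is organizational: the paper runs the cycle $(1)\Rightarrow(2)\Leftrightarrow(3)\Rightarrow(4)\Rightarrow(1)$, invoking reducedness at the step $(3)\Rightarrow(4)$ (the ideal sheaf $\MI$ lies in the nilradical, hence is zero), whereas you prove $(1)\Leftrightarrow(4)$ directly and use reducedness at $(3)\Rightarrow(1)$ via the local trivialization; the underlying content is the same.
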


\begin{proof}
	It is clear that (1) implies (2).
	By Proposition~\ref{prop:ZL-2},
	(2) is equivalent to (3).
	Since $X$ is reduced,
	(3) implies (4).
	Thus it is enough to show that
	(4) implies (1).
	Let $\phi, \MI$ be as in Definition~\ref{def:ZL-1}.
	Since $Z(s) = X$, 
	we have $\MI = 0$, 
	which implies $\phi$ is zero,
	i.e., $s=0$.
\end{proof}

\begin{dfnprop}\label{prop:ZL-effective divisor}
	Let $X$ be a noetherian scheme, 
	let $L$ be an invertible sheaf on $X$, and
	let $s \in H^0(X,L)$.
	Let $\phi,\phi',\MI$ be as in Definition~\ref{def:ZL-1}.
	Then, the following conditions are equivalent:
	\begin{enumerate}
		\item
			$\MI \simeq L^{-1}$ holds as $\MO_X$-modules;
		\item
			$\MI$ is an invertible sheaf on	$X$;
		\item
			$\phi'$ is injective;
		\item
			$\phi$ is injective;
		\item
			for any $x \in \Ass\, X$, 
			$s(x) \neq 0$ holds;
		\item
			for any $x \in \Ass\, X$, 
			$s(y) \neq 0$ holds
			for some $y \in \overline{\{x\}}$,
			where $\overline{\{x\}}$ is the closure of the subset $\{x\}$ of $X$;
		\item
			for any $x \in \Ass\, X$, 
			$\overline{\{x\}} \nsubseteq \Supp(Z(s))$ holds,
			where $\overline{\{x\}}$ is the closure of the subset $\{x\}$ of $X$.
	\end{enumerate}
	If $s$ satisfies the above equivalent conditions,
	we say that 
	$Z(s)$ {\em defines an effective Cartier divisor}.
\end{dfnprop}

\begin{proof}
	It is clear that 
	(3) implies (1), (1) implies (2), and (3) is equivalent to (4).
	
	First, we show that (2) implies (3).
	Since the conditions are local,
	we may assume that 
	$X$ is affine and 
	written as $X = \Spec\,A$ with a noetherian ring $A$.
	We may also assume that
	$\MI$ and $L$ are isomorphic to $\MO_X$.
	Let $\MJ$ be the kernel sheaf of the composition 
	\[
	\psi:\MO_X \stackrel{\simeq}{\to} L^{-1} \stackrel{\phi'}{\to} \MO_X,
	\]
	and let $J \coloneqq \Gamma(X,\MJ)$.
	Then, $\psi(X)$ induces the isomorphism 
	\[
	A/J \simeq \Gamma(X,\MI) \simeq A
	\]
	as $A$-modules.
	Multiplying the above $A$-modules by the ideal $J$,
	we have 
	\[
	0 = J (A/J) \simeq JA = J.
	\]
	Thus, $\MJ$ is the zero sheaf and $\phi'$ is injective.
	
	Second, we show that (4) is equivalent to (5).
	Let $U \simeq \Spec\,A$ be an open affine subscheme of $X$
	such that $L|_U \simeq \MO_U$.
	Let $M \coloneqq \Gamma(U,L|_U)$.
	Then, $\phi|_U$ is injective
	if and only if
	$s|_U$ is a non-zero-divisor of the $A$-module $M$.
	Since $M \simeq A$,
	$s|_U$ is a non-zero-divisor of $M$
	if and only if $s|_U \notin \Mp M$ holds for every $\Mp \in \Ass\,A$.
	By Proposition~\ref{prop:AP-1},
	it is equivalent to 
	the condition that $s_x \notin \Mm_x L_x$ holds for every $x \in (\Ass\,X) \cap U$,
	where $\Mm_x$ is the maximal ideal of $\MO_{X,x}$.
	Thus, we have that 
	$\phi$ is injective 
	 if and only if $s_x \notin \Mm_x L_x$, 
	 i.e., $s(x) = 0$, 
	 holds for every $x \in \Ass\,X$.
	
	Finally, we show that (5), (6), and (7) are equivalent.
	It is clear that (5) implies (6).
	Let $x \in \Ass\,X$ and 
	let $y \in \overline{\{x\}}$.
	Then, $s(x) = 0$ implies $s(y)= 0$.
	Thus, we have that (6) implies (5).
	By Proposition~\ref{prop:ZL-2},
	(6) is equivalent to (7).
\end{proof}

\begin{prop}\label{prop:ZL-4}
	Let $X$ be a noetherian scheme, 
	let $L_1$ and $L_2$ be  invertible sheaves on $X$, 
	let $s_1 \in H^0(X,L_1)$, and 
	let $s_2 \in H^0(X,L_2)$.
	Let $\MI$ (resp. $\MI_1$, $\MI_2$) be the ideal sheaf of $Z(s_1 \otimes s_2)$ (resp. $Z(s_1)$, $Z(s_2)$).
	Then, 
	$\MI = \MI_1 \MI_2$ holds
	as sheaves of ideals on $X$.
\end{prop}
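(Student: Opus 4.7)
The statement is local on $X$, so the plan is to reduce to an open affine cover where both $L_1$ and $L_2$ trivialize, and then verify the claimed equality of ideals by direct computation.

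First I would choose a cover of $X$ by open affine subschemes $U = \Spec A$ with $A$ noetherian such that there exist trivializations $\alpha_i: \MO_U \xrightarrow{\simeq} L_i|_U$ for $i=1,2$. Under $\alpha_i$, the section $s_i|_U$ corresponds to some element $f_i \in A$, meaning the morphism $\phi_i|_U : \MO_U \to L_i|_U$ from Definition~\ref{def:ZL-1} sends $1 \mapsto \alpha_i(f_i)$. Tracking $\phi'_i|_U$ through the trivialization $\alpha_i^{-1}: L_i^{-1}|_U \xrightarrow{\simeq} \MO_U$, the map $\phi'_i|_U$ becomes multiplication by $f_i$ on $A$, so its image is the ideal $(f_i) \subseteq A$. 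Hence $\MI_i|_U = \widetilde{(f_i)}$, and therefore $(\MI_1 \MI_2)|_U = \widetilde{(f_1 f_2)}$.

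Next I would compute $\MI|_U$. Using the induced trivialization $\alpha_1 \otimes \alpha_2: \MO_U \to (L_1 \otimes L_2)|_U$, the section $(s_1 \otimes s_2)|_U$ corresponds to the element $f_1 f_2 \in A$, essentially by the definition of the tensor product of sections. Consequently, applying Definition~\ref{def:ZL-1} to $s_1 \otimes s_2$ gives that $\phi'_{12}|_U$ is (up to trivialization) multiplication by $f_1 f_2$, so $\MI|_U = \widetilde{(f_1 f_2)} = (\MI_1 \MI_2)|_U$. Since this holds on an open cover and $\MI, \MI_1 \MI_2$ are quasi-coherent subsheaves of $\MO_X$, we conclude $\MI = \MI_1 \MI_2$.

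The main bookkeeping issue, and the only mildly delicate step, is to keep the trivializations $\alpha_1, \alpha_2$ and the induced $\alpha_1 \otimes \alpha_2$ straight and to verify that the constructions of $\phi'_1, \phi'_2, \phi'_{12}$ from Definition~\ref{def:ZL-1} are compatible with tensor products, i.e., that the diagram
\[
\begin{CD}
(L_1 \otimes L_2)^{-1} @>{\phi'_{12}}>> \MO_X \\
@V{\simeq}VV @AA{\mu}A \\
L_1^{-1} \otimes L_2^{-1} @>>{\phi'_1 \otimes \phi'_2}> \MO_X \otimes \MO_X
\end{CD}
\]
commutes, where $\mu$ is the multiplication map. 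Once this is checked (which is immediate from the fact that the canonical isomorphism $(L_1 \otimes L_2)^{-1} \simeq L_1^{-1} \otimes L_2^{-1}$ is compatible with the evaluation pairings defining $\phi'_i$), the computation above on trivializing opens finishes the proof.
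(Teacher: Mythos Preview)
Your argument is correct. The paper takes a slightly slicker global route that avoids choosing trivializations: it observes that the map $\phi'$ associated to $s_1\otimes s_2$ factors as
\[
L_1^{-1}\otimes_{\MO_X} L_2^{-1}
\xrightarrow{\ \phi_1'\otimes\id\ }
\MO_X\otimes_{\MO_X} L_2^{-1}\simeq L_2^{-1}
\xrightarrow{\ \phi_2'\ }\MO_X,
\]
and then reads off the image directly:
\[
\MI=\phi'(L_1^{-1}\otimes L_2^{-1})=\phi_2'(\MI_1 L_2^{-1})=\MI_1\,\phi_2'(L_2^{-1})=\MI_1\MI_2.
\]
This is precisely the content of the commutative diagram you wrote down at the end as a ``bookkeeping issue''; the paper simply uses that factorization as the whole proof, without ever passing to an affine cover. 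Your local-trivialization computation is a perfectly good alternative and is arguably more concrete, but note that once you have verified the compatibility diagram, the affine computation becomes redundant: the equality $\MI=\MI_1\MI_2$ follows immediately from taking images in that diagram, globally.
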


\begin{proof}
	Let $\phi':L_1^{-1} \otimes_{\MO_X} L_2^{-1} \to \MO_X$ 
	(resp. ${\phi_1':L_1^{-1} \to \MO_X}$, ${\phi_2':L_2^{-1} \to \MO_X}$) be 
	the morphism	of $\MO_X$-modules 
	corresponding to $s_1 \otimes s_2$ 
	(resp. $s_1$, $s_2$) as in Definition~\ref{def:ZL-1}.
	Then, $\phi'$ coincides with the composition
	\[
	L_1^{-1} \otimes_{\MO_X} L_2^{-1}
	\stackrel{\phi_1' \otimes \id_{L_2^{-1}}}{\longrightarrow}
	\MO_X \otimes_{\MO_X} L_2^{-1}
	\stackrel{\simeq}{\to}
	L_2^{-1}
	\stackrel{\phi_2'}{\to}
	\MO_X.
	\]
	Thus, we have
	$\MI 
	= \phi'(L_1^{-1} \otimes_{\MO_X} L_2^{-1})
	= \phi_2'(\MI_1 L_2^{-1}) 
	= \MI_1 \phi_2'(L_2^{-1})
	= \MI_1 \MI_2$.
\end{proof}

\begin{prop}\label{prop:ZL-5}
	Let $X$ be a noetherian scheme, 
	let $Y$ be a closed subscheme of $X$,
	let $L$ be an invertible sheaf on $X$, and 
	let $s \in H^0(X,L)$.
	Then,
	$Z(s|_Y)$
	coincides with 
	the scheme-theoretic intersection of  $Z(s)$ and $Y$.
\end{prop}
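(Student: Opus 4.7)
The plan is to work entirely with ideal sheaves and reduce the statement to a straightforward property of the map $\phi':L^{-1}\to \MO_X$ from Definition~\ref{def:ZL-1}. Let $\MI\subseteq\MO_X$ denote the ideal sheaf of $Z(s)$, and let $\MI_Y\subseteq\MO_X$ denote the ideal sheaf of $Y$. By the definition of the scheme-theoretic intersection (Subsection~\ref{sec:Notation}~(\ref{enu:scheme-theoretic-union})), the ideal sheaf of $Z(s)\cap Y$ in $X$ equals $\MI+\MI_Y$, so it suffices to show that the ideal sheaf defining $Z(s|_Y)$, viewed as a closed subscheme of $X$, is also $\MI+\MI_Y$.

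The first step is to identify the map associated to $s|_Y$. Writing $L|_Y = L\otimes_{\MO_X}\MO_Y$, the morphism $\phi'_{s|_Y}:(L|_Y)^{-1}\to \MO_Y$ from Definition~\ref{def:ZL-1} is obtained from $\phi'_s:L^{-1}\to\MO_X$ by applying $-\otimes_{\MO_X}\MO_Y$. This is a formal but essential check, and I would verify it locally on an open affine $U\subseteq X$ with $L|_U\simeq\MO_U$, where the construction just amounts to multiplication by the element representing $s$ in the relevant quotient ring.

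Next, because $-\otimes_{\MO_X}\MO_Y$ is right exact, the image of $\phi'_{s|_Y}$ equals $\Image(\phi'_s)\cdot\MO_Y = \MI\cdot\MO_Y\subseteq\MO_Y$. Under the canonical identification $\MO_Y = \MO_X/\MI_Y$, this ideal corresponds to $(\MI+\MI_Y)/\MI_Y$. Hence $Z(s|_Y)$, viewed inside $X$, is the closed subscheme cut out by the ideal $\MI+\MI_Y$, which is exactly the ideal cutting out the scheme-theoretic intersection $Z(s)\cap Y$.

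The only mildly subtle step is the compatibility in the first paragraph above: one must make sure the natural isomorphism $(L|_Y)^{-1}\otimes_{\MO_Y}L|_Y\simeq\MO_Y$ is compatible with the pullback of $L^{-1}\otimes_{\MO_X}L\simeq\MO_X$, so that $\phi'_{s|_Y}$ really is $\phi'_s\otimes\id_{\MO_Y}$. Once this naturality is in hand, the rest is formal and the proposition follows.
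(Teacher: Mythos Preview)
Your proof is correct and follows essentially the same approach as the paper's: both arguments identify $\phi'_{s|_Y}$ as the base change of $\phi'_s$ along $\MO_X\to\MO_Y$ (the paper phrases this via a commutative square, you via $-\otimes_{\MO_X}\MO_Y$), then use surjectivity/right exactness to conclude that the image of $\phi'_{s|_Y}$ in $\MO_Y$ is $(\MI+\MI_Y)/\MI_Y$, whence the ideal of $Z(s|_Y)$ in $X$ is $\MI+\MI_Y$.
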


\begin{proof}
	Let $\phi':L^{-1} \to \MO_X$ 
	(resp. ${\phi_Y':(L|_Y)^{-1} \to \MO_Y}$)
	be the morphism of $\MO_X$-modules (resp. $\MO_Y$-modules)
	corresponding to $s$ 
	(resp. $s|_Y$) as in Definition~\ref{def:ZL-1}.
	Let $\MI$ (resp. $\MI'$)
	be the ideal sheaf of $Z(s)$ 
	(resp. $Z(s|_Y)$) on $X$.
	Let $\MI_Y$ be the ideal sheaf of $Y$ on $X$.
	We consider the following commutative diagram:
	\[
	\begin{CD}
		L^{-1}			@>\phi'>>		\MO_X \\
		@VV\beta V								@VV\alpha V \\
		(L|_Y)^{-1}	@>\phi_Y'>>	\MO_Y. 
	\end{CD}
	\]
	Note that $\alpha$ and $\beta$ are surjective.
	Since $\alpha (\MI')$ is the ideal sheaf of $Z(s|_Y)$ on $Y$,
	we have 
	\[
	\alpha (\MI')
	= \phi_Y'((L|_Y)^{-1}) 
	= (\phi_Y' \circ \beta) (L ^{-1}) 
	= (\alpha \circ \phi') (L ^{-1})
	= \alpha (\MI).
	\]
	Since $\MI' \supseteq \MI_Y$,
	we have 
	\[
	\MI' = \MI' + \MI_Y = \MI + \MI_Y.
	\]
\end{proof}

\subsection{Base loci}

\begin{dfn} \label{def:base-locus}
	Let $f:X \to S$ be a projective morphism 
	of noetherian schemes.
	Let $L$ be an invertible sheaf on $X$.
	\begin{itemize}
		\item
			The {\em $f$-base locus} $\Bs_f(L)$ of $L$ 
			is defined as 
			\[
			\Bs_f(L) \coloneqq \Supp \  \Coker(f^* f_* L \to L).
			\]
		\item
			The {\em $f$-stable base locus} $\SB_f(L)$ of $L$ 
			is defined as 
			\[
			\SB_f(L) \coloneqq \bigcap_{m \in \Z_{>0}} \Bs_f(L^{\otimes m}).
			\]
		\item
			The {\em $f$-augmented base locus} $\B_{+,f}(L)$ of $L$ 
			is defined as 
			\[
			\B_{+,f}(L) 
			\coloneqq \bigcap_{m \in \Z_{>0}} \SB_f(L^{\otimes m} \otimes_{\MO_X} A^{-1}),
			\]
			where $A$ is an $f$-ample invertible sheaf on $X$.
	\end{itemize}
	$\Bs_f(L)$, $\SB_f(L)$ and $\B_{+,f}(L)$ are closed subsets of $X$.
\end{dfn}

The following argument is extracted from \cite{CMM}.

\begin{rem}
	Since $X$ is a noetherian topological space,
	for sufficiently divisible $m \in \Z_{>0}$,
	\[
	\SB_f(L) = \Bs_f(L^{\otimes m})
	\]
	holds.
	For the same reason,
	for sufficiently large $m \in \Z_{>0}$,
	\[
	\B_{+,f}(L) = \SB_f(L^{\otimes m} \otimes_{\MO_X} A^{-1})
	\]
	holds.
\end{rem}

The following argument is extracted from \cite{CMM}.

\begin{rem}
	The definition of $\B_{+,f}(L)$ is independent of the choice of $A$.
	Indeed, if $A_1$ and $A_2$ are $f$-ample invertible sheaves,
	then there is a positive integer $r \in \Z_{>0}$ such that $A_1^{\otimes r} \otimes_{\MO_X} A_2^{-1}$ is $f$-free.
	Then, by
	\[
	L^{\otimes mr} \otimes_{\MO_X} A_2^{-1} 
	=(L^{\otimes m} \otimes_{\MO_X} A_1^{-1})^{\otimes r} \otimes_{\MO_X} (A_1^{\otimes r} \otimes_{\MO_X} A_2^{-1}),
	\]
	we have
	$\SB_f(L^{\otimes mr} \otimes_{\MO_X} A_2^{-1}) 
	\subseteq \SB_f(L^{\otimes m} \otimes_{\MO_X} A_1^{-1})$.
	By the same argument, we also have 
	$\SB_f(L^{\otimes mr'} \otimes_{\MO_X} A_1^{-1}) 
	\subseteq \SB_f(L^{\otimes m} \otimes_{\MO_X} A_2^{-1})$
	for some $r' \in \Z_{>0}$.
	Thus, the following holds:
	\[
	\bigcap_{m \in \Z_{>0}} \SB_f(L^{\otimes m} \otimes_{\MO_X} A_1^{-1})
	= \bigcap_{m \in \Z_{>0}} \SB_f(L^{\otimes m} \otimes_{\MO_X} A_2^{-1}).
	\]
\end{rem}

\begin{prop}\label{BL-affineness}
	Let $f:X \to S$ be a projective morphism of noetherian schemes, and
	let $L$ be an invertible sheaf on $X$.
	Let $S'$ be an open subscheme of $S$,
	let $X' \coloneqq X \times_S S'$, and
	let $g:X' \to S'$ be the induced morphism.
	\begin{enumerate}
		\item\label{enu:BL-affineness-1}
			$\Bs_f(L) \cap X' = \Bs_g(L|_{X'})$ holds.
		\item\label{enu:BL-affineness-2}
			$\SB_f(L) \cap X' = \SB_g(L|_{X'})$ holds.
		\item\label{enu:BL-affineness-3}
			$\B_{+,f}(L) \cap X' = \B_{+,g}(L|_{X'})$ holds.
	\end{enumerate}
\end{prop}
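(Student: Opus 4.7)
The plan is to prove (\ref{enu:BL-affineness-1}) directly from flat base change along the open immersion $j:S' \hookrightarrow S$, and then deduce (\ref{enu:BL-affineness-2}) and (\ref{enu:BL-affineness-3}) formally by taking intersections.

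For (\ref{enu:BL-affineness-1}), I would consider the cartesian square
\[
\begin{CD}
X' @>j'>> X \\
@VgVV @VfVV \\
S' @>j>> S
\end{CD}
\]
where $j'$ is the open immersion obtained by base change. Since $j$ (hence $j'$) is flat, flat base change (or, more elementarily, the fact that restricting a quasi-coherent pushforward to an open subset agrees with pushing forward the restriction) gives a canonical isomorphism $j^*f_*L \simeq g_*(j')^*L = g_*(L|_{X'})$. Pulling back by $g$ and using $g \circ (\text{id}) = j \circ g$ (i.e., $f \circ j' = j \circ g$), one obtains a commutative square identifying the natural map $g^*g_*(L|_{X'}) \to L|_{X'}$ with the restriction $(j')^*(f^*f_*L \to L)$. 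Since $(j')^*$ is an exact functor (as $j'$ is an open immersion) and commutes with taking cokernel supports via intersection with $X'$, we conclude $\Bs_g(L|_{X'}) = \Bs_f(L) \cap X'$.

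For (\ref{enu:BL-affineness-2}), I would simply write
\[
\SB_g(L|_{X'})
= \bigcap_{m \in \Z_{>0}} \Bs_g\bigl(L|_{X'}^{\otimes m}\bigr)
= \bigcap_{m \in \Z_{>0}} \bigl(\Bs_f(L^{\otimes m}) \cap X'\bigr)
= \Bigl(\bigcap_{m \in \Z_{>0}} \Bs_f(L^{\otimes m})\Bigr) \cap X'
= \SB_f(L) \cap X',
\]
where the second equality uses (\ref{enu:BL-affineness-1}) applied to each $L^{\otimes m}$, and the third is elementary. For (\ref{enu:BL-affineness-3}), I would fix an $f$-ample invertible sheaf $A$ on $X$; then $A|_{X'}$ is $g$-ample (ampleness being preserved by open base change), so by applying (\ref{enu:BL-affineness-2}) to $L^{\otimes m} \otimes_{\MO_X} A^{-1}$ and intersecting over $m$, one obtains $\B_{+,g}(L|_{X'}) = \B_{+,f}(L) \cap X'$ in exactly the same way.

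There is no real obstacle here; the only substantive point is the base-change isomorphism $j^*f_*L \simeq g_*(L|_{X'})$ in the proof of (\ref{enu:BL-affineness-1}), together with the compatibility of the evaluation map $f^*f_*L \to L$ with this isomorphism. Everything else is bookkeeping with intersections of supports.
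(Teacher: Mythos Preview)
Your proposal is correct and follows essentially the same approach as the paper, which simply asserts that (\ref{enu:BL-affineness-1}) holds ``by the definition of relative base loci'' and that (\ref{enu:BL-affineness-2}) and (\ref{enu:BL-affineness-3}) follow from the preceding parts. You have merely unpacked what ``by definition'' means for (\ref{enu:BL-affineness-1})---namely the base-change identification $j^*f_*L \simeq g_*(L|_{X'})$ and the exactness of $(j')^*$---and made the formal intersections for (\ref{enu:BL-affineness-2}) and (\ref{enu:BL-affineness-3}) explicit.
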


\begin{proof}
	By the definition of relative base loci,
	we have (\ref{enu:BL-affineness-1}).
	(\ref{enu:BL-affineness-2}) follows from (\ref{enu:BL-affineness-1}).
	(\ref{enu:BL-affineness-3}) follows from (\ref{enu:BL-affineness-2}).
\end{proof}

\begin{prop}\label{BL-Z(s)}
	Let $f:X \to S$ be a projective morphism of noetherian schemes,
	with $S$ affine.
	Let $L$ be an invertible sheaf on $X$.
	Then, we have 
	\[
	\Bs_f(L) = \bigcap_{s \in H^0(X,L)} \Supp(Z(s)).
	\]
\end{prop}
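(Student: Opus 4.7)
The plan is to analyze the cokernel of $f^*f_*L \to L$ stalkwise and match it against the pointwise vanishing description of $\Supp(Z(s))$ given in Proposition~\ref{prop:ZL-2}. Since $S$ is affine, write $S = \Spec\,A$, so that $f_*L$ is the quasi-coherent sheaf associated to the $A$-module $M \coloneqq H^0(X, L)$.

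First I would compute the image of the natural morphism $\phi \colon f^*f_*L \to L$ at each stalk. Fix $x \in X$ and let $\Mp \subseteq A$ be the prime ideal corresponding to $f(x) \in S$. Then
\[
(f^*f_*L)_x \simeq M_\Mp \otimes_{A_\Mp} \MO_{X,x} \simeq M \otimes_A \MO_{X,x},
\]
and $\phi_x$ acts by $t \otimes a \mapsto a \cdot t_x$, where $t_x \in L_x$ is the germ of $t \in M$. Hence $\Im(\phi_x)$ is precisely the $\MO_{X,x}$-submodule of $L_x$ generated by the germs $\{t_x : t \in H^0(X, L)\}$.

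Next I would identify when the cokernel vanishes at $x$. Since $L_x$ is free of rank one over the local ring $\MO_{X,x}$, Nakayama's lemma tells us that $\Im(\phi_x) = L_x$ if and only if the images of the $t_x$ generate the one-dimensional $k(x)$-vector space $L_x/\Mm_x L_x$, i.e., if and only if there exists some $t \in H^0(X, L)$ with $t(x) \neq 0$. Therefore
\[
x \in \Bs_f(L) = \Supp\,\Coker(\phi) \iff t(x) = 0 \text{ for every } t \in H^0(X, L).
\]

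Finally, Proposition~\ref{prop:ZL-2} gives $\Supp(Z(t)) = \{x \in X : t(x) = 0\}$, so
\[
\bigcap_{t \in H^0(X,L)} \Supp(Z(t)) = \{x \in X : t(x) = 0 \text{ for all } t \in H^0(X,L)\},
\]
which by the previous paragraph coincides with $\Bs_f(L)$. No real obstacle appears in this argument; the only care needed is to identify $\Im(\phi_x)$ correctly using the affineness of $S$ (so that $f_*L$ is globally given by the $A$-module $M$), and to apply Nakayama to the rank-one free module $L_x$.
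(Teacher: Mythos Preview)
Your proposal is correct and follows essentially the same approach as the paper's proof: both argue pointwise that $x \notin \Bs_f(L)$ iff $(f^*f_*L)_x \to L_x$ is surjective iff $s(x) \neq 0$ for some $s \in H^0(X,L)$, and then invoke Proposition~\ref{prop:ZL-2}. The paper simply asserts the middle equivalence in one line, whereas you unpack it by computing the stalk of $f^*f_*L$ explicitly and applying Nakayama's lemma; this is a faithful expansion rather than a different argument.
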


\begin{proof}
	Let $x \in X$.
	Then, $x \notin \Bs_f(L)$
	if and only if 
	$(f^* f_* L)_x \to L_x$ is surjective,
	i.e., $s(x) \neq 0$ for some $s \in H^0(X,L)$.
	By Proposition~\ref{prop:ZL-2},
	$s(x) \neq 0$
	if and only if
	$x \notin \Supp(Z(s))$.
	Thus, the assertion holds.
%
%
%
\end{proof}

\begin{prop}\label{prop:BL-|_Y}
	Let $f:X \to S$ be a projective morphism 
	of noetherian schemes 
	and let $L$ be an invertible sheaf on $X$.
	Let $Y$ be a closed subscheme of $X$.
	\begin{enumerate}
		\item \label{enu:BL-|_Y-Bs}
			$\Bs_{f|_Y}(L|_Y) \subseteq \Bs_f(L)$ holds.
		\item \label{enu:BL-|_Y-SB}
			$\SB_{f|_Y}(L|_Y) \subseteq \SB_f(L)$ holds.
		\item \label{enu:BL-|_Y-B_+}
			$\B_{+,f|_Y}(L|_Y) \subseteq \B_{+,f}(L)$ holds.
	\end{enumerate}
\end{prop}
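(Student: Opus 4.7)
The plan is to prove (\ref{enu:BL-|_Y-Bs}) first, and then deduce (\ref{enu:BL-|_Y-SB}) and (\ref{enu:BL-|_Y-B_+}) by elementary manipulations of intersections, so there is no real obstacle beyond assembling the right earlier results.

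For (\ref{enu:BL-|_Y-Bs}), the inclusion to be shown is local on $S$, so by Proposition~\ref{BL-affineness} I may replace $S$ by an open affine cover and assume $S$ is affine. Proposition~\ref{BL-Z(s)} then identifies
\[
\Bs_f(L)=\bigcap_{s\in H^0(X,L)}\Supp(Z(s)), \qquad \Bs_{f|_Y}(L|_Y)=\bigcap_{t\in H^0(Y,L|_Y)}\Supp(Z(t)).
\]
The key observation is Proposition~\ref{prop:ZL-5}, which yields $\Supp(Z(s|_Y))=\Supp(Z(s))\cap Y$ for every $s\in H^0(X,L)$. Since restriction defines a map $H^0(X,L)\to H^0(Y,L|_Y)$, intersecting only over the restricted sections can only enlarge $\Bs_{f|_Y}(L|_Y)$, and so
\[
\Bs_{f|_Y}(L|_Y)\subseteq \bigcap_{s\in H^0(X,L)}\Supp(Z(s|_Y))
=\bigcap_{s\in H^0(X,L)}\bigl(\Supp(Z(s))\cap Y\bigr)
=\Bs_f(L)\cap Y\subseteq \Bs_f(L).
\]

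For (\ref{enu:BL-|_Y-SB}), I apply (\ref{enu:BL-|_Y-Bs}) to $L^{\otimes m}$ for each $m\in\Z_{>0}$, obtaining $\Bs_{f|_Y}(L^{\otimes m}|_Y)\subseteq \Bs_f(L^{\otimes m})$, and then intersect over $m$ using the definition of $\SB$. For (\ref{enu:BL-|_Y-B_+}), I fix an $f$-ample invertible sheaf $A$ on $X$; its restriction $A|_Y$ is $f|_Y$-ample, so $\B_{+,f|_Y}(L|_Y)$ may be computed with $A|_Y$. Applying (\ref{enu:BL-|_Y-SB}) to each $L^{\otimes m}\otimes_{\MO_X}A^{-1}$ and then intersecting over $m\in\Z_{>0}$ gives (\ref{enu:BL-|_Y-B_+}).
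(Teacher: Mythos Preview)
Your proof is correct and follows essentially the same approach as the paper: reduce to affine $S$ via Proposition~\ref{BL-affineness}, use Proposition~\ref{BL-Z(s)} and Proposition~\ref{prop:ZL-5} to compare the intersections of zero loci, and then deduce (\ref{enu:BL-|_Y-SB}) and (\ref{enu:BL-|_Y-B_+}) from (\ref{enu:BL-|_Y-Bs}) and (\ref{enu:BL-|_Y-SB}) respectively. Your derivations of (\ref{enu:BL-|_Y-SB}) and (\ref{enu:BL-|_Y-B_+}) are spelled out in slightly more detail than in the paper, but the argument is the same.
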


The following proof is extracted from \cite{CMM}.

\begin{proof}
	By Proposition~\ref{BL-affineness},
	we may assume that $S$ is affine.
	Then we have
	\begin{align*}
	\Bs_{f|_Y}(L|_Y) 
	&= \bigcap_{s \in H^0(Y, L|_Y)} \Supp(Z(s)) \\
	&\subseteq \bigcap_{t \in H^0(X,L)} \Supp(Z(t|_Y)) \\
	&= \bigcap_{t \in H^0(X,L)} (\Supp(Z(t)) \cap \Supp(Y)),
	\end{align*}
	where the first equality follows from Proposition~\ref{BL-Z(s)}
	and the last equality follows from	 Proposition~\ref{prop:ZL-5}.
	On the other hand, we have
	\[
	\bigcap_{t \in H^0(X,L)} (\Supp(Z(t)) \cap \Supp(Y))  
	\subseteq \bigcap_{t \in H^0(X,L)} \Supp(Z(t))
	= \Bs_f(L),
	\]
	where the last equality follows from Proposition~\ref{BL-Z(s)}.
	Hence (\ref{enu:BL-|_Y-Bs}) holds.
	(\ref{enu:BL-|_Y-SB}) follows from (\ref{enu:BL-|_Y-Bs}).
	(\ref{enu:BL-|_Y-B_+}) follows from (\ref{enu:BL-|_Y-SB}).
\end{proof}

\subsection{Irredundant decomposition of noetherian schemes}


\begin{prop}\label{Dec-sheaf}
	Let $X$ be a noetherian scheme, and
	let $\MF$ be a coherent sheaf on $X$.
	Let $\sequence{x}{,}{n}$ be all the associated points of $\MF$.
	Let $I \coloneqq \{1,2,\dots,n\}$.
	Then, 
	there are coherent subsheaves $\sequence{\MG}{,}{n}$ of $\MF$
	such that the following conditions hold:
	\begin{enumerate}
		\item
			$\bigcap_{i \in I} \MG_i = 0$;
		\item
			$\Ass(\MF / \MG_i) = \{x_i\}$ for every $i \in I$;
		\item
			$\bigcap_{j \in J} \MG_j \neq 0$ 
			for any proper subset $J \subsetneq I$.
	\end{enumerate}
\end{prop}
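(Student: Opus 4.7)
The plan is to construct each $\MG_i$ as a maximal coherent subsheaf of $\MF$ with $x_i \notin \Ass(\MG_i)$. By noetherianity of $X$, the collection $\mathcal{C}_i$ of such subsheaves is non-empty (it contains $0$) and any ascending chain stabilizes, so maximal elements exist by Zorn's lemma. The key claim is $\Ass(\MF/\MG_i) = \{x_i\}$; once this is established, (1) and (3) follow by routine $\Ass$-bookkeeping.

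The inclusion $\{x_i\} \subseteq \Ass(\MF/\MG_i)$ is immediate from the short exact sequence $0 \to \MG_i \to \MF \to \MF/\MG_i \to 0$ together with the standard containment $\Ass(\MF) \subseteq \Ass(\MG_i) \cup \Ass(\MF/\MG_i)$, the hypothesis $x_i \in \Ass(\MF)$, and the defining property $x_i \notin \Ass(\MG_i)$. For the reverse inclusion, suppose $y \in \Ass(\MF/\MG_i)$ with $y \neq x_i$; I split on whether $x_i \in \overline{\{y\}}$. When $x_i \notin \overline{\{y\}}$, set $Z := \overline{\{y\}}$ and let $\MN \subseteq \MF/\MG_i$ be the subsheaf of local sections supported on $Z$; by noetherianity, $\MN$ coincides with $\sheafHom(\MO_X/\MI_Z^n, \MF/\MG_i)$ for large $n$ and is therefore coherent. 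It is nonzero (its stalk at $y$ contains the $\Mm_y$-torsion element witnessing $y \in \Ass(\MF/\MG_i)$) and $\Supp\MN \subseteq Z \not\ni x_i$, so $x_i \notin \Ass(\MN)$. The preimage $\pi^{-1}(\MN)$ (with $\pi \colon \MF \to \MF/\MG_i$) then properly contains $\MG_i$ and satisfies $\Ass(\pi^{-1}(\MN)) \subseteq \Ass(\MG_i) \cup \Ass(\MN) \not\ni x_i$, contradicting the maximality of $\MG_i$.

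The main obstacle is the case $x_i \in \overline{\{y\}}$, where $\MN$ can carry $x_i$ as an associated point and the argument above breaks. I would resolve it by choosing $\MG_i$ more carefully from the outset: first, apply the classical Lasker--Noether theorem to the stalk $\MF_{x_i}$ over the local ring $\MO_{X,x_i}$ to obtain a submodule $Q_i \subseteq \MF_{x_i}$ maximal with $\Mm_{x_i} \notin \Ass(Q_i)$, which yields $\Ass(\MF_{x_i}/Q_i) = \{\Mm_{x_i}\}$. Extend $Q_i$ to a coherent subsheaf $\tilde Q_i \subseteq \MF$ with $(\tilde Q_i)_{x_i} = Q_i$ using the standard coherent extension theorem on noetherian schemes, and then take $\MG_i$ to be a maximal element of $\mathcal{C}_i$ containing $\tilde Q_i$. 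Then $(\MG_i)_{x_i} \supseteq Q_i$, while $(\MG_i)_{x_i}$ still misses $\Mm_{x_i}$ among its associated primes; the maximality of $Q_i$ therefore forces $(\MG_i)_{x_i} = Q_i$. Hence $\Ass((\MF/\MG_i)_{x_i}) = \{\Mm_{x_i}\}$, and any hypothetical $y \in \Ass(\MF/\MG_i)$ with $x_i \in \overline{\{y\}}$ would localize to a prime in $\Ass((\MF/\MG_i)_{x_i}) = \{\Mm_{x_i}\}$, forcing $y = x_i$.

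For (1), $\bigcap_i \MG_i \subseteq \MG_k$ for every $k$ gives $\Ass(\bigcap_i \MG_i) \subseteq \bigcap_k (\Ass(\MF) \setminus \{x_k\}) = \emptyset$, and since a nonzero coherent sheaf on a noetherian scheme has nonempty $\Ass$ by Proposition~\ref{prop:AP-2}(2), $\bigcap_i \MG_i = 0$. For (3), if $\bigcap_{j \in J} \MG_j = 0$ for some $J \subsetneq I$, pick $i \in I \setminus J$; the diagonal map $\MF \hookrightarrow \bigoplus_{j \in J} \MF/\MG_j$ is injective, whence $\Ass(\MF) \subseteq \bigcup_{j \in J} \Ass(\MF/\MG_j) = \{x_j : j \in J\}$, contradicting $x_i \in \Ass(\MF)$.
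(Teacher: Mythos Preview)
Your argument is correct. The paper itself does not supply a proof here---it simply refers to \cite[Proposition~3.2.6]{EGA-IV-2}---so there is no in-paper argument to compare against. What you have written is essentially a direct reconstruction of the sheaf-theoretic primary decomposition: choosing each $\MG_i$ maximal among coherent subsheaves with $x_i\notin\Ass(\MG_i)$ is the standard route, and your verifications of (1) and (3) via the containments $\Ass(\bigcap_i\MG_i)\subseteq\bigcap_k\Ass(\MG_k)$ and $\Ass(\MF)\subseteq\bigcup_{j\in J}\Ass(\MF/\MG_j)$ are clean. The genuine content is the reverse inclusion in~(2), and you correctly isolate the difficulty: unlike the affine module case, where one may simply enlarge $\MG_i$ by a cyclic submodule generated by an element with annihilator $\Mq_y$, in the global sheaf setting there is no immediate way to produce a coherent subsheaf of $\MF/\MG_i$ whose only associated point is $y$ when $x_i\in\overline{\{y\}}$. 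Your workaround---arranging from the outset that $(\MG_i)_{x_i}$ equals a maximal $Q_i\subseteq\MF_{x_i}$ with $\Mm_{x_i}\notin\Ass(Q_i)$, so that $(\MF/\MG_i)_{x_i}$ already has $\Ass=\{\Mm_{x_i}\}$ and any $y$ with $x_i\in\overline{\{y\}}$ is forced to equal $x_i$ after localizing---is a valid and rather elegant resolution. The spreading-out step (finding a coherent $\tilde Q_i\subseteq\MF$ with stalk $Q_i$ at $x_i$) is routine on a noetherian scheme, and the Case~1 contradiction via $\pi^{-1}(\MN)$ goes through unchanged with the modified $\MG_i$ since $\tilde Q_i\in\mathcal{C}_i$.
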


\begin{proof}
	See \cite[Proposition~3.2.6]{EGA-IV-2}.
\end{proof}

\begin{dfnprop}\label{Dec-scheme}
	Let $X$ be a noetherian scheme.
	Let $\sequence{x}{,}{n}$ be all the associated points of $X$.
	Let $I \coloneqq \{1,2,\dots,n\}$.
	Then, 
	there are closed subschemes $\sequence{X}{,}{n}$ of $X$
	such that the following conditions hold:
	\begin{enumerate}
		\item
			$X$ coincides with 
			the scheme-theoretic union of $\sequence{X}{,}{n}$
			(cf. Subsection~\ref{sec:Notation}~(\ref{enu:scheme-theoretic-union}));
		\item
			$\Ass\, X_i = \{x_i\}$ for every $i \in I$;
		\item
			for any proper subset $J \subsetneq I$,
			$X$ does not coincide with 
			the scheme-theoretic union $\bigcup_{j \in J} X_j$.
	\end{enumerate}
	If $\sequence{X}{,}{n}$ satisfy the above conditions,
	the decomposition $X = \bigcup_{i \in I} X_i$,
	where the union	is the scheme-theoretic union, 
	is called
	an {\em irredundant decomposition} of $X$.
\end{dfnprop}

\begin{proof}
	Apply Proposition~\ref{Dec-sheaf} to $\MO_X$.
\end{proof}

\section{Weak bigness and exceptional loci}

\subsection{Weak bigness}


\begin{dfn} 
Let $f:X \to S$ be a proper morphism of noetherian schemes
and let $L$ be an invertible sheaf on $X$.
Let $A$ be an $f$-ample invertible sheaf on $X$.
$L$ is {\em $f$-weakly big} 
if there exists a positive integer $m \in \Z_{>0}$ 
such that 
\[
 (f|_{X_\red})_*((L^{\otimes m} \otimes_{\MO_X} A^{-1})|_{X_{\red}}) \neq 0.
\]
When $X$ is irreducible, we simply say $L$ is {\em $f$-big} 
if $L$ is $f$-weakly big.
\end{dfn}

\begin{rem}
	The definition of $f$-weak bigness is independent of the choice of $A$.
	To prove this,
	we show that
	for any $f$-ample invertible sheaves $A_1$ and $A_2$,
	if
	\[
	 (f|_{X_\red})_*((L^{\otimes m} \otimes_{\MO_X} A_1^{-1})|_{X_{\red}}) \neq 0
	\]
	holds for some $m \in \Z_{>0}$,
	then
	\[
	 (f|_{X_\red})_*((L^{\otimes m'} \otimes_{\MO_X} A_2^{-1})|_{X_{\red}}) \neq 0
	\]
	holds for some $m' \in \Z_{>0}$.
	By replacing $X$ with $X_\red$,
	we may assume that $X$ is reduced.
	Assume that
	$f_*(L^{\otimes m} \otimes_{\MO_X} A_1^{-1}) \neq 0$.
	Then, there are an open affine subset $U$ of $S$
	and a nonzero element 
	\[
	s 
	\in H^0(U, f_*(L^{\otimes m} \otimes_{\MO_X} A_1^{-1}))
	= H^0(f^{-1}(U), L^{\otimes m} \otimes_{\MO_X} A_1^{-1}).
	\]
	Pick a closed point $x \in f^{-1}(U)$
	such that
	$s(x) \neq 0$.
	Since $A_1$ is $f$-ample,
	there is a positive integer $r \in \Z_{>0}$
	such that $A_1^{\otimes r} \otimes_{\MO_X} A_2^{-1}$ is $f$-free.
	Thus, there is an element 
	\[
	t 
	\in H^0(U, f_*(A_1^{\otimes m} \otimes_{\MO_X} A_2^{-1}))
	= H^0(f^{-1}(U), A_1^{\otimes m} \otimes_{\MO_X} A_2^{-1})
	\]
	such that
	$t(x) \neq 0$.
	Then, by
	\[
	L^{\otimes mr} \otimes_{\MO_X} A_2^{-1} 
	=(L^{\otimes m} \otimes A_1^{-1})^{\otimes r} 
	\otimes_{\MO_X} (A_1^{\otimes m} \otimes A_2^{-1}),
	\]
	$s^{\otimes r} \otimes t$
	is an element of 
	\[
	H^0(f^{-1} (U), L^{\otimes mr} \otimes_{\MO_X} A_2^{-1})
	= H^0(U, f_* (L^{\otimes mr} \otimes_{\MO_X} A_2^{-1})).
	\]
	Since $(s^{\otimes r} \otimes t) (x) \neq 0$,
	we have 
	$f_* (L^{\otimes mr} \otimes_{\MO_X} A_2^{-1}) \neq 0$.
\end{rem}

\begin{prop}\label{prop:WB-affineness}
	Let $f:X \to S$ be a projective morphism of noetherian schemes, and
	let $L$ be an invertible sheaf on $X$.
	Then, the following conditions are equivalent:
	\begin{enumerate}
		\item \label{enu:WB-affineness-1}
			$L$ is $f$-weakly big;
		\item \label{enu:WB-affineness-2}
			For some non-empty open subscheme $S'$ of $S$,
			$L|_{X'}$ is $g$-weakly big,
			where $X' \coloneqq X \times_S S'$ 
			and $g:X' \to S'$ is the natural morphism.
		\item \label{enu:WB-affineness-3}
			For some non-empty open affine subscheme $S'$ of $S$,
			$L|_{X'}$ is $g$-weakly big,
			where $X' \coloneqq X \times_S S'$ 
			and $g:X' \to S'$ is the natural morphism.
	\end{enumerate}
\end{prop}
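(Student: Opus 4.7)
The plan is to reduce everything to the observation that both the nilradical construction and the coherent pushforward commute with open base change on the target, so the non-vanishing condition defining $f$-weak bigness can be tested on any non-empty open subscheme of $S$ meeting the support of the relevant pushforward sheaf.

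First I would record, for any open subscheme $S' \subseteq S$ with $X' := X \times_S S' = f^{-1}(S')$ and $g := f|_{X'}$, the two compatibilities: (a) $X'_\red = X_\red \times_S S'$ as open subschemes of $X_\red$, because the nilradical ideal sheaf of $X$ restricts to the nilradical ideal sheaf of $X'$; and (b) for any coherent sheaf $\MG$ on $X_\red$, one has $(f|_{X_\red})_*(\MG)|_{S'} = (g|_{X'_\red})_*(\MG|_{X'_\red})$, since proper pushforward commutes with open restriction on the base. I would also note that $A|_{X'}$ is $g$-ample, so it may be used as the test ample sheaf for $g$-weak bigness of $L|_{X'}$. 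Applied to $\MG = (L^{\otimes m} \otimes_{\MO_X} A^{-1})|_{X_\red}$, facts (a) and (b) give
\[
(g|_{X'_\red})_*\bigl((L^{\otimes m} \otimes_{\MO_X} A^{-1})|_{X'_\red}\bigr)
= (f|_{X_\red})_*\bigl((L^{\otimes m} \otimes_{\MO_X} A^{-1})|_{X_\red}\bigr)\bigm|_{S'}.
\]

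With this in hand the three implications are immediate. For (\ref{enu:WB-affineness-1})$\Rightarrow$(\ref{enu:WB-affineness-3}), if $L$ is $f$-weakly big choose $m$ with $\MH := (f|_{X_\red})_*((L^{\otimes m} \otimes_{\MO_X} A^{-1})|_{X_\red}) \neq 0$; since $S$ is noetherian and covered by affine opens, there is an affine open $S' \subseteq S$ meeting $\Supp \MH$, hence $\MH|_{S'} \neq 0$, and the displayed identity yields (\ref{enu:WB-affineness-3}). The implication (\ref{enu:WB-affineness-3})$\Rightarrow$(\ref{enu:WB-affineness-2}) is trivial. For (\ref{enu:WB-affineness-2})$\Rightarrow$(\ref{enu:WB-affineness-1}), non-vanishing of the left-hand side of the displayed identity for some $m$ and some $S'$ forces $\MH \neq 0$, i.e., (\ref{enu:WB-affineness-1}).

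There is no substantive obstacle here; the only thing to verify carefully is the compatibility of the pushforward $(f|_{X_\red})_*$ with open restriction on $S$, together with the matching identification $X'_\red = X_\red \times_S S'$, both of which are standard.
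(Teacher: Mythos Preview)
Your proof is correct and follows essentially the same approach as the paper: both reduce the equivalence to the fact that the coherent pushforward sheaf defining weak bigness restricts compatibly along open immersions of the base, so nonvanishing can be tested on any member of an affine open cover. The paper's proof is simply a terse two-line version of what you have written out, invoking ``the definition of weak bigness'' and ``the definition of sheaves'' where you spell out the identities $X'_\red = X_\red \times_S S'$ and $(f|_{X_\red})_*(-)|_{S'} = (g|_{X'_\red})_*(-|_{X'_\red})$ explicitly.
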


\begin{proof}
	%
	By the definition of weak bigness, 
	we have that 
	(\ref{enu:WB-affineness-1}) is equivalent to (\ref{enu:WB-affineness-2}).
	Since $S$ has an open affine cover,
	it also holds that (\ref{enu:WB-affineness-1}) is equivalent to (\ref{enu:WB-affineness-3})
	by the definition of sheaves.
\end{proof}


\begin{prop}\label{prop:WB-image}
	%
	Let $g:X \to S'$ be a projective morphism of noetherian schemes,
	let $\alpha:S' \to S$ be a finite morphism of noetherian schemes, and
	let $f \coloneqq \alpha \circ g$.
	Let $L$ be an invertible sheaf on $X$.
	Then,
	$L$ is $f$-weakly big
	if and only if
	$L$ is $g$-weakly big.
\end{prop}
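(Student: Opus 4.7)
The plan is to reduce both conditions (the existence of a relatively ample $A$ and the nonvanishing of a certain pushforward) to the intermediate base $S'$, exploiting that $\alpha$ is finite and hence affine.

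First I would observe that since $\alpha$ is finite, it is in particular affine, and for any affine morphism the notion of relative ampleness is preserved and reflected under composition: an invertible sheaf $A$ on $X$ is $f$-ample if and only if it is $g$-ample. (This is the standard statement that ampleness is local on the base and unaffected by further affine base morphisms; see e.g.\ \cite[Tag~01VI]{SP}.) Thus the set of invertible sheaves $A$ that may witness weak bigness is the same on both sides, and I may use the same $A$ in the two definitions.

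Next, for any quasi-coherent sheaf $\MH$ on $X_{\red}$ I would use the identity
\[
(f|_{X_{\red}})_* \MH \;=\; \alpha_*\, (g|_{X_{\red}})_* \MH,
\]
together with the fact that $\alpha_*$ is faithful on quasi-coherent sheaves. The latter holds because $\alpha$ is affine: locally on $S$, with $S=\Spec A$ and $S'=\Spec B$, a quasi-coherent sheaf on $S'$ corresponds to a $B$-module $N$, and $\alpha_*$ corresponds to viewing $N$ as an $A$-module with the same underlying set, so $\alpha_* = 0$ forces $N=0$. Consequently $(f|_{X_{\red}})_* \MH \neq 0$ if and only if $(g|_{X_{\red}})_* \MH \neq 0$.

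Applying this with $\MH = (L^{\otimes m}\otimes_{\MO_X} A^{-1})|_{X_{\red}}$ for an invertible sheaf $A$ that is simultaneously $f$-ample and $g$-ample, the condition defining $f$-weak bigness and the one defining $g$-weak bigness become equivalent for every $m$, which yields the proposition. There is no real obstacle here; the only points to record carefully are the preservation/reflection of relative ampleness under the affine morphism $\alpha$ and the faithfulness of $\alpha_*$ on quasi-coherent sheaves, both of which are standard facts about affine morphisms.
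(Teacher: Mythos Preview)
Your proposal is correct and follows essentially the same approach as the paper. The paper first reduces to the case where $S$ and $S'$ are both affine (via Proposition~\ref{prop:WB-affineness}), after which both pushforwards are just $H^0(X_{\red},-)$ and the equivalence is immediate; your argument with the faithfulness of $\alpha_*$ is the global formulation of the same observation, and both proofs hinge on the fact that $f$-ampleness and $g$-ampleness coincide because $\alpha$ is affine.
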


\begin{proof}
	By Proposition~\ref{prop:WB-affineness},
	we may assume that $S'$ and $S$ are affine.
	Let $A$ be an $f$-ample invertible sheaf on $X$.
	Note that $A$ is also $g$-ample.
	Then, $L$ is $f$-weakly big  if and only if
	$H^0(X_\red,(L^{\otimes m} \otimes_{\MO_X} A^{-1})|_{X_{\red}}) \neq 0$
	for some $m \in \Z_{>0}$,
	which is equivalent to $g$-weak bigness of $L$.
\end{proof}

%

\begin{prop} \label{WB-lower bound} 
Let $f:X \to S$ be a projective surjective morphism of integral noetherian schemes, and
let $L$ be an invertible sheaf on $X$.
Let $\sigma$ be the generic point of $S$, and
let $d \coloneqq \dim(X/S)$.
Then, the following conditions are equivalent:
\begin{enumerate}
\item
	$L$ is $f$-big;
\item
	$L|_{X_\sigma}$ is $g$-big,
	where $g:X_\sigma \to \Spec\, K(S)$ is the induced morphism;
\item
	there is a positive real number $C \in \R_{>0}$
	such that for sufficiently large $m \in \Z_{>0}$,
	$\glen_S\, f_*L^{\otimes m} \ge Cm^d$ holds. 
\end{enumerate}
\end{prop}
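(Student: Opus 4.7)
The plan is to prove the cycle $(1) \Rightarrow (2) \Rightarrow (3) \Rightarrow (1)$ by reducing everything to the theory of bigness for proper schemes over a field. The key preliminary observation is that, since $X$ and $S$ are integral and $f$ is dominant (by surjectivity), the generic point $\xi$ of $X$ lies above $\sigma$ and
\[
\MO_{X_\sigma, \xi} \;=\; \MO_{X, \xi} \otimes_{\MO_{S, \sigma}} K(S) \;=\; K(X).
\]
Thus $X_\sigma$ is irreducible with generic point $\xi$ and is already reduced at $\xi$, and so is $X_{\sigma, \red}$; in particular, a nonzero section of an invertible sheaf on any of $X$, $X_\sigma$, $X_{\sigma, \red}$ is detected at $\xi$. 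I also use the base-change identity $(f_* \MF)_\sigma = H^0(X_\sigma, \MF|_{X_\sigma})$ (flat base change for the projective morphism $f$), together with Remark~\ref{rem:GR-2} and $\MO_{S,\sigma} = K(S)$, to identify
\[
\glen_S f_* \MF \;=\; \dim_{K(S)} H^0(X_\sigma, \MF|_{X_\sigma}).
\]

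For $(1) \Rightarrow (2)$: by Proposition~\ref{prop:WB-affineness} I may assume $S = \Spec\, R$ affine. A nonzero $s \in H^0(X, L^{\otimes m} \otimes_{\MO_X} A^{-1})$ has $s_\xi \neq 0$ because $X$ is integral and the sheaf is invertible; its germ at $\xi$ survives restriction to $X_{\sigma, \red}$, so the restriction is nonzero and $L|_{X_\sigma}$ is $g$-big. For $(2) \Rightarrow (3)$: the displayed length identity rewrites $(3)$ as $h^0(X_\sigma, L^{\otimes m}|_{X_\sigma}) \ge C m^d$ for large $m$, on the proper irreducible $K(S)$-scheme $X_\sigma$. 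Starting from $g$-bigness of $L|_{X_\sigma}$, the polynomial growth of $h^0$ over the (possibly imperfect) field $K(S)$ is Birkar's characterization of bigness \cite{Bir17}; the passage between $X_\sigma$ and $X_{\sigma, \red}$ in the definition of weak bigness is handled by the nilradical filtration on $X_\sigma$, whose graded pieces contribute at most $O(m^{d-1})$ to $h^0$ and hence do not affect the $m^d$-growth.

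For $(3) \Rightarrow (1)$: the growth $h^0(X_\sigma, L^{\otimes m}|_{X_\sigma}) \ge C m^d$, combined with Kodaira's lemma on $X_\sigma / K(S)$ applied to the ample sheaf $A|_{X_\sigma}$ (the restriction sequence for a divisor in $|A^{\otimes k}|$ shows $h^0$ drops by at most $O(m^{d-1})$ on twisting by $A^{-k}$, while multiplying back by a section of $A^{\otimes (k-1)}$ converts a section of $L^{\otimes m}\otimes A^{-k}$ into one of $L^{\otimes m}\otimes A^{-1}$), yields $H^0(X_\sigma, (L^{\otimes m} \otimes_{\MO_X} A^{-1})|_{X_\sigma}) \neq 0$ for some $m$. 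The base-change identity then gives $f_* (L^{\otimes m} \otimes_{\MO_X} A^{-1}) \neq 0$, i.e., $L$ is $f$-big. The main technical obstacle is the growth characterization of bigness over an arbitrary base field used in $(2) \Rightarrow (3)$; essentially everything else reduces to formal flat base change together with nilradical bookkeeping.
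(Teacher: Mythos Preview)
Your approach is correct and follows the same essential strategy as the paper: reduce to the generic fibre via flat base change and then invoke the theory of bigness over a field. The paper's execution is more economical, however, and you are doing unnecessary work in two places.

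First, you worry about the passage between $X_\sigma$ and $X_{\sigma,\red}$, invoking a nilradical filtration and claiming the graded pieces contribute $O(m^{d-1})$. In fact $X_\sigma$ is already integral: on an affine open $\Spec A \subseteq X$ lying over $\Spec R' \subseteq S$, the fibre is $\Spec(A \otimes_{R'} K(S))$, and since $R' \hookrightarrow A$ are domains this is a localisation of $A$, hence a domain. So $X_{\sigma,\red} = X_\sigma$ and the whole nilradical discussion is moot. (Your $O(m^{d-1})$ claim for the graded pieces of $h^0$ would in general be wrong without a nefness hypothesis, but since $\MN = 0$ it does no damage here.) The paper sidesteps this entirely.

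Second, where you prove the cycle $(1)\Rightarrow(2)\Rightarrow(3)\Rightarrow(1)$ and sketch a Kodaira-lemma argument for $(3)\Rightarrow(1)$, the paper simply proves the two biconditionals. For $(1)\Leftrightarrow(2)$ it uses the base-change identity
\[
H^0(X, L^{\otimes m}\otimes A^{-1}) \otimes_R K(R) \;=\; H^0(X_\sigma,(L^{\otimes m}\otimes A^{-1})|_{X_\sigma}),
\]
together with the fact that $H^0(X,-)$ is torsion-free over $R$ (as $X$ is integral). For $(2)\Leftrightarrow(3)$ it writes $\glen_S f_*L^{\otimes m} = \dim_{K(S)} H^0(X_\sigma,L^{\otimes m}|_{X_\sigma})$ and cites \cite[Lemma~4.3]{Bir17} directly, which packages both your $(2)\Rightarrow(3)$ growth argument and your $(3)\Rightarrow(1)$ Kodaira-type argument into a single black box over the field $K(S)$.
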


\begin{proof}
	Let $A$ be an $f$-ample invertible sheaf on $X$.
	Note that $A|_{X_\sigma}$ is ample.
	By Remark~\ref{GR-affineness} and Proposition~\ref{prop:WB-affineness},
	we may assume that $S$ is affine and 
	written as $S = \Spec\,R$ with a noetherian ring $R$.
	By \cite[Proposition~III.9.3]{Har},
	we have 
	\[
	H^0(X,L^{\otimes m} \otimes_{\MO_X} A^{-1}) \otimes_{R} K(R) 
	= H^0(X_\sigma, (L|_{X_\sigma})^{\otimes m} \otimes_{\MO_{X_\sigma}} (A|_{X_\sigma})^{-1})
	\] 
	for any $m \in \Z_{>0}$.
	Thus, (1) is equivalent to (2).
	By Remark~\ref{rem:GR-1} and Remark~\ref{rem:GR-2},
	\[
	\glen_S\, f_*L^{\otimes m} 
	= \dim_{K(S)} H^0(X_\sigma, (L|_{X_\sigma})^{\otimes m})
	\]
	holds.
	Thus by \cite[Lemma~4.3]{Bir17},
	(2) is equivalent to (3).
\end{proof}

\begin{prop}\label{WB-affineness2}
	Let $f:X \to S$ be a projective morphism of noetherian schemes, 
	with $X$ irreducible.
	Let $L$ be an invertible sheaf on $X$.
	Then, the following conditions are equivalent:
	\begin{enumerate}
		\item \label{enu:WB-affineness2-1}
			$L$ is $f$-big;
		\item \label{enu:WB-affineness2-2}
			For every open subscheme $S'$ of $S$
			with $S' \cap f(X) \neq \emptyset$,
			$L|_{X'}$ is $g$-big,
			where $X' \coloneqq X \times_S S'$ 
			and $g:X' \to S'$ is the natural morphism.
	\end{enumerate}
\end{prop}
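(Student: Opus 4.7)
First I note that (\ref{enu:WB-affineness2-2}) $\Rightarrow$ (\ref{enu:WB-affineness2-1}) follows by taking $S' = S$. For the converse, fix an open $S' \subseteq S$ with $S' \cap f(X) \neq \emptyset$, and set $X' \coloneqq f^{-1}(S')$ and $g \coloneqq f|_{X'}$; since $X$ is irreducible and $X'$ is a non-empty open subscheme, $X'$ is irreducible.

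The guiding idea is that bigness only depends on the generic fibre of the map to $f(X)$, and shrinking $S$ to an open containing the generic point of $f(X)$ does not change this fibre. To make the idea precise, I reduce to the integral surjective case in two steps: (a) replace $X$ by $X_\red$, which is allowed since by definition $f$-weak bigness---equivalently $f$-bigness, as $X$ is irreducible---depends only on $X_\red$; and (b) replace $S$ by the scheme-theoretic image $f(X)$ via Proposition~\ref{prop:WB-image}, noting that the closed immersion $f(X) \hookrightarrow S$ is finite and that $f(X)$ is integral since $X$ is. Both reductions carry over compatibly to $g : X' \to S'$, using that scheme-theoretic image commutes with the flat (open) base change $S' \hookrightarrow S$, so that the scheme-theoretic image of $g$ is $f(X) \cap S'$.

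After these reductions, $f : X \to S$ and $g : X' \to S'$ are both projective surjective morphisms of integral noetherian schemes. Let $\sigma$ be the generic point of $S$; since $S'$ is a non-empty open of the irreducible $S$, $\sigma$ lies in $S'$ and is also the generic point of $S'$, whence $(X')_\sigma = X_\sigma$. Applying Proposition~\ref{WB-lower bound} to both $f$ and $g$, I conclude that $L$ is $f$-big iff $L|_{X_\sigma}$ is big iff $L|_{X'}$ is $g$-big, yielding (\ref{enu:WB-affineness2-2}). I do not foresee any genuine obstacle: the argument is a chain of routine reductions followed by an appeal to Proposition~\ref{WB-lower bound}, with the only point requiring minor care being that the scheme-theoretic image reduction is compatible with the restriction to $X'$.
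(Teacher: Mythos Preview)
Your proposal is correct and follows essentially the same approach as the paper: reduce to $X$ integral, then replace $S$ by $f(X)$ via Proposition~\ref{prop:WB-image}, and finally invoke Proposition~\ref{WB-lower bound}. The only cosmetic difference is that the paper bridges $f$-bigness and $g$-bigness via condition~(3) of Proposition~\ref{WB-lower bound} together with Remark~\ref{GR-affineness} (equality of generic lengths), whereas you use condition~(2) (equality of generic fibres $X_\sigma = (X')_\sigma$); these are two sides of the same coin.
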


\begin{proof}
	Let $X'$, $S'$, and $g$ be as in (\ref{enu:WB-affineness2-2}).
	Note that $X'$ is not empty and it is irreducible.
	Replacing $X$ with $X_\red$,
	we may assume that $X$ is integral.
	Then also $X'$ is integral.
	By Proposition~\ref{prop:WB-image},
	replacing $S$ with $f(X)$,
	we may assume that 
	$f$ is surjective and $S$ is integral.
	Then also $S'$ is integral.
	Since
	$\glen_S\, f_*L^{\otimes m} 
	= \glen_{S'}\, g_*L^{\otimes m}$
	by Remark~\ref{GR-affineness},
	the assertion follows from Proposition~\ref{WB-lower bound},
\end{proof}

\begin{lem}\label{WB-upper bound}
	Let $f:X \to S$ be a projective morphism of noetherian schemes,
	with $S$ integral.
	Let $L$ be an invertible sheaf on $X$, and
	let $\MF$ be a coherent sheaf on $X$.
	Let $d \coloneqq \dim(X/S)$.
	Then, there exists a positive real number $C \in \R_{>0}$ such that 
	for sufficiently large $m \in \Z_{>0}$, 
	we have
	\[
	\glen_S\, f_* (\MF \otimes_{\MO_X} L^{\otimes m}) \leq Cm^d .
	\]
\end{lem}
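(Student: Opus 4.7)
The plan is to reduce to the corresponding statement over a field by passing to the generic fibre of $f$, and then invoke the standard bound on $h^0$ of a twisted coherent sheaf on a projective scheme over a field. First, by Remark~\ref{GR-affineness} I may replace $S$ with any nonempty open affine $\Spec R \subseteq S$; since $S$ is integral, $R$ is a noetherian integral domain with $K(R) = K(S)$.

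Let $\sigma$ denote the generic point of $S$ and $j:X_\sigma \to X$ the induced morphism. Combining \cite[Proposition~III.9.3]{Har} with Remark~\ref{rem:GR-1}~(4) and Remark~\ref{rem:GR-2}, I obtain
\[
\glen_S f_*(\MF \otimes_{\MO_X} L^{\otimes m}) = \dim_{K(R)} H^0(X_\sigma,\, j^*\MF \otimes_{\MO_{X_\sigma}} (j^*L)^{\otimes m}).
\]
The scheme $X_\sigma$ is projective over the field $K(R)$, has dimension $d = \dim(X/S)$, and $j^*\MF$ is coherent on $X_\sigma$. Thus the lemma reduces to the following statement over an arbitrary field $k$: if $Y$ is a projective $k$-scheme of dimension $d$, $\MG$ is a coherent $\MO_Y$-module, and $M$ is any invertible sheaf on $Y$, then $h^0(Y, \MG \otimes M^{\otimes m}) \le C m^d$ for all sufficiently large $m$.

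For the field statement, the approach is to choose a very ample invertible sheaf $A$ on $Y$ such that $B \coloneqq M \otimes A$ is also very ample, and then a section $s \in H^0(Y, A)$ which is a non-zero-divisor on $\MG$ (such an $s$ exists because $\Ass(\MG)$ is finite and $A$ is globally generated, after replacing $A$ by a sufficiently high power if $k$ is small). Multiplication by $s^m$ then yields an injection
\[
H^0(Y, \MG \otimes M^{\otimes m}) \hookrightarrow H^0(Y, \MG \otimes B^{\otimes m}),
\]
whose right-hand side is bounded by a polynomial of degree at most $d$ in $m$ for $m \gg 0$, by Serre vanishing applied to the ample $B$ together with asymptotic Riemann--Roch.

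The substantive content is really the field case, which is already \cite[Lemma~4.3]{Bir17}; once it is granted, the reduction via the generic fibre is routine. The only subtle point is the construction of the non-zero-divisor section $s$ when the residue field $K(R)$ may be finite, but this is handled by passing to a sufficiently large power of $A$ so that there are enough global sections to avoid the finitely many associated primes of $\MG$ by a module-theoretic prime avoidance argument.
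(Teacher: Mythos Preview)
Your proof is correct and follows essentially the same approach as the paper: pass to the generic fibre and invoke the field case from \cite{Bir17}. Two minor points: the paper cites \cite[Lemma~4.1]{Bir17} for the upper bound (your reference to Lemma~4.3 appears to be the lower-bound/bigness lemma in the paper's usage), and the paper explicitly disposes of the edge case $d=-\infty$ (i.e., $X_\sigma=\emptyset$), which you should also mention.
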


\begin{proof}
	Let $\sigma$ be the generic point of $S$.
	By Remark~\ref{rem:GR-2},
	we have
	\[
	\glen_S\, f_* (\MF \otimes_{\MO_X} L^{\otimes m})
	= \dim_{K(S)} H^0(X_\sigma, (\MF|_{X_\sigma}) \otimes_{ \MO_{X_\sigma}} (L|_{X_\sigma})^{\otimes m}).
	\]
	If $d \ge 0$,
	by \cite[Lemma~4.1]{Bir17},
	there exists a positive real number $C \in \R_{>0}$ such that 
	\[
	\dim_{K(S)} H^0(X_\sigma, (\MF|_{X_\sigma}) \otimes_{ \MO_{X_\sigma}} (L|_{X_\sigma})^{\otimes m})
	\le Cm^d
	\]
	holds for sufficiently large $m \in \Z_{>0}$.
	If $X_\sigma = \emptyset$,
	\[
	H^0(X_\sigma, (\MF|_{X_\sigma}) \otimes_{ \MO_{X_\sigma}} (L|_{X_\sigma})^{\otimes m}) = 0
	\] 
	holds.
	Thus, the above inequality also holds when $d=-\infty$.
\end{proof}

The following proof is the same as in \cite[Lemma~2.6]{Sti21}
and almost the same as in \cite[Lemma~2.3]{CMM}.

\begin{lem}\label{E-main}
Let $f:X \to S$ be a projective morphism
of noetherian schemes,
with $X$ reduced and $S$ affine.
Let $L$ be an invertible sheaf on $X$, 
and let $A$ be an $f$-ample invertible sheaf on $X$.
Let $X_1$ be an irreducible component of $X$
such that $L|_{X_1}$ is $f|_{X_1}$-big.
Then, 
for sufficiently large $m \in \Z_{>0}$,
there is a section $s \in H^0(X,L^{\otimes m} \otimes A^{-1})$
such that
$\Supp(X_1) \nsubseteq \Supp(Z(s))$ holds.
\end{lem}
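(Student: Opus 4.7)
The plan is to follow the strategy of case~(i) in the sketch of Lemma~\ref{intro:lem:K-4}, via a Mayer--Vietoris exact sequence and a comparison of generic lengths. Let $X_2,\dots,X_n$ be the irreducible components of $X$ other than $X_1$, let $X'$ be their scheme-theoretic union, and let $T$ be the scheme-theoretic intersection of $X_1$ and $X'$. Since $X$ is reduced, $\MI_{X_1} \cap \MI_{X'} = 0$ in $\MO_X$, so the scheme-theoretic union of $X_1$ and $X'$ is $X$; and since $X_1$ is not contained in any other $X_i$, one has $\Supp(T) \subsetneq \Supp(X_1)$. Set $S_1 \coloneqq f(X_1)$, which is integral; write $S_1 = \Spec R$ for an integral domain $R$; let $g_1\colon X_1 \to S_1$ be the induced (projective, surjective) morphism; and put $d \coloneqq \dim(X_1/S_1)$ and $M_m \coloneqq L^{\otimes m} \otimes_{\MO_X} A^{-1}$.

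First I would derive, by tensoring the short exact sequence $0 \to \MO_X \to \MO_{X_1} \oplus \MO_{X'} \to \MO_T \to 0$ (verified on affine opens using the ideal descriptions above) with the invertible sheaf $M_m$ and taking global sections, the four-term exact sequence
\[
0 \to H^0(X, M_m) \to H^0(X_1, M_m|_{X_1}) \oplus H^0(X', M_m|_{X'}) \xrightarrow{\delta} H^0(T, M_m|_T),
\]
with $\delta(a,b) = a|_T - b|_T$. It then suffices to produce, for large $m$, a nonzero $s_1 \in H^0(X_1, M_m|_{X_1})$ with $s_1|_T = 0$: the pair $(s_1, 0)$ lies in $\ker\delta$ and lifts to $s \in H^0(X, M_m)$ with $s|_{X_1} = s_1 \neq 0$, whence Propositions~\ref{prop:ZL-3} and~\ref{prop:ZL-5} yield $\Supp(X_1) \nsubseteq \Supp(Z(s))$.

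The core step is to compare generic lengths of the two sides of the restriction map $\rho\colon H^0(X_1, M_m|_{X_1}) \to H^0(T, M_m|_T)$. By Proposition~\ref{prop:WB-image} applied to the closed immersion $S_1 \hookrightarrow S$, $L|_{X_1}$ is $g_1$-big, so there exist $m_0 > 0$ and a nonzero $t \in H^0(X_1, M_{m_0}|_{X_1})$. Since $X_1$ is integral, multiplication by $t$ is an $\MO_{X_1}$-module injection $(L|_{X_1})^{\otimes(m-m_0)} \hookrightarrow M_m|_{X_1}$ for $m \geq m_0$, and Proposition~\ref{WB-lower bound} applied to $(g_1, L|_{X_1})$ yields a constant $C_1 > 0$ with $\glen_R H^0(X_1, M_m|_{X_1}) \geq C_1 (m-m_0)^d$ for large $m$. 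On the other hand, Proposition~\ref{RD-<} gives $\dim(T/S_1) \leq d-1$, so Lemma~\ref{WB-upper bound} applied to the induced morphism $T \to S_1$ with coherent sheaf $A^{-1}|_T$ and invertible sheaf $L|_T$ furnishes $C_2 > 0$ with $\glen_R H^0(T, M_m|_T) \leq C_2 m^{d-1}$ for large $m$.

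Since the localization $R \hookrightarrow K(R)$ is flat and preserves kernels, $\glen_R \ker\rho = \dim_{K(R)} \ker(\rho \otimes_R K(R))$; as the source and target of $\rho \otimes_R K(R)$ have $K(R)$-dimensions bounded respectively below by $C_1(m-m_0)^d$ and above by $C_2 m^{d-1}$, one has $\ker\rho \neq 0$ once $m$ is large, which supplies the desired $s_1$. The main conceptual step is verifying the Mayer--Vietoris sequence; the rest is a length comparison directly powered by Proposition~\ref{WB-lower bound} and Lemma~\ref{WB-upper bound}. The degenerate case $n = 1$ (so $X = X_1$ integral, $X' = T = \emptyset$) reduces even more directly, since the exact sequence collapses to an isomorphism $H^0(X, M_m) \xrightarrow{\sim} H^0(X_1, M_m|_{X_1})$, and the lower bound alone guarantees nonzero sections.
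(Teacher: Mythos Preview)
Your proof is correct and follows essentially the same route as the paper: set up the Mayer--Vietoris sequence for $X = X_1 \cup X'$, then compare generic $R_1$-lengths via Proposition~\ref{WB-lower bound} (lower bound on $X_1$) and Lemma~\ref{WB-upper bound} together with Proposition~\ref{RD-<} (upper bound on $T$). The only cosmetic differences are that the paper argues by contradiction (if the restriction $H^0(X,M)\to H^0(X_1,M|_{X_1})$ were zero for infinitely many $m$, then $H^0(X_1,M|_{X_1})\hookrightarrow H^0(T,M|_T)$, contradicting the length estimates), whereas you argue directly by exhibiting a nonzero element of $\ker\rho$; and you spell out the multiplication-by-$t$ step to pass from the lower bound for $(L|_{X_1})^{\otimes m}$ to one for $M_m|_{X_1}$, which the paper leaves implicit.
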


\begin{proof}
Assume that
$\Supp(X_1) \subseteq \Supp(Z(s))$ holds
for infinitely many $m \in \Z_{>0}$ and
for every $s \in H^0(X,L^{\otimes m} \otimes_{\MO_X} A^{-1})$.
Let $X'$ be the union of the other irreducible components of $X$ equipped with the  reduced scheme structure.
Let $T$ be the scheme-theoretic intersection of $X_1$ and $X'$.
Let $S_1 \coloneqq f( X_1 )$.
Note that $S_1$ is an integral noetherian affine scheme.
Thus, we can write $S_1 = \Spec\,R_1 $ with a noetherian integral domain $R_1$.
Let $d \coloneqq \dim(X_1/ S_1)$.
Let $M \coloneqq L^{\otimes m} \otimes_{\MO_X} A^{-1}$.
By the Mayer--Vietoris sequence, we have the exact sequence
\[
 0 \to \MO_X \to \MO_{X_1} \oplus \MO_{X'} \to \MO_T \to 0. 
\]
After tensoring with $M$ and taking global section, this induces the exact sequence
\[
 0 \to H^0(X,M) \to H^0({X_1},M|_{X_1}) \oplus H^0({X'},M|_{X'}) \to H^0({T},M|_T). 
\]
By the assumption, 
the homomorphism $H^0(X,M) \to H^0({X_1},M|_{X_1})$ is zero for infinitely many $m \in \Z_{>0}$.
Thus, for such $m$, 
the homomorphism
$H^0({X_1},M|_{X_1}) \to H^0(T,M|_T)$ is injective, 
and we have
\begin{equation}\label{E-ieq1}
\glen_{R_1}\, H^0({X_1},M|_{X_1}) \leq \glen_{R_1}\, H^0(T,M|_T).
\end{equation}
By Proposition~\ref{RD-<},
we have $\dim(T/S_1) \le d-1$.
By Lemma~\ref{WB-upper bound}, 
there is a positive real number $C \in \R _{>0}$ 
such that 
for sufficiently large $m>0$,
\[
 \glen_{R_1}\, H^0(T,M|_T) 
 \leq Cm^{\dim(T/S_1)} 
 \leq Cm^{d-1}
\]
holds.
On the other hand, 
by Proposition~\ref{WB-lower bound},
there is a positive real number $C' \in \R _{>0}$
such that
for sufficiently large $m>0$, 
\[
 \glen_{R_1}\, H^0({X_1},M|_{X_1}) \geq C' m^{d}
\]
holds.
These two estimates contradict inequality~(\ref{E-ieq1}).
\end{proof}

\begin{thm}\label{WB-wb=b}
	Let $f:X \to S$ be a projective morphism of noetherian schemes,
	and let $L$ be an invertible sheaf on $X$.
	Then, the following conditions are equivalent:
	\begin{enumerate}
		\item
			$L$ is $f$-weakly big;
		\item
			$L|_{X_\red}$ is $f|_{X_\red}$-weakly big;
		\item
			there exists an irreducible component $Y$ of $X_\red$
			such that $L|_Y$ is $f|_Y$-big.
	\end{enumerate}	
\end{thm}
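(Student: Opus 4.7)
The plan is as follows. First, the equivalence (1) $\Leftrightarrow$ (2) is tautological: the definition of $f$-weak bigness already refers only to $X_\red$ and $(L^{\otimes m}\otimes_{\MO_X} A^{-1})|_{X_\red}$, and since $(X_\red)_\red = X_\red$ and $A|_{X_\red}$ is $f|_{X_\red}$-ample, the two conditions are literally the same assertion. So I may replace $X$ by $X_\red$ and assume $X$ is reduced throughout, and by Proposition~\ref{prop:WB-affineness} I may further assume that $S$ is affine. Let $Y_1,\dots,Y_n$ be the irreducible components of $X$ with their reduced induced structures, and let $\MI_i \subseteq \MO_X$ denote the ideal sheaf of $Y_i$.

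For the implication (3) $\Rightarrow$ (1), suppose $L|_{Y_i}$ is $f|_{Y_i}$-big for some $i$. I would apply Lemma~\ref{E-main} with $X_1 = Y_i$ to obtain, for sufficiently large $m$, a section $s \in H^0(X, L^{\otimes m} \otimes_{\MO_X} A^{-1})$ whose zero locus does not contain $\Supp(Y_i)$. In particular $s \neq 0$, and since $S$ is affine this forces $f_*(L^{\otimes m} \otimes_{\MO_X} A^{-1}) \neq 0$, i.e., $L$ is $f$-weakly big.

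For (1) $\Rightarrow$ (3), reducedness of $X$ means that the nilradical vanishes, so $\bigcap_{i=1}^n \MI_i = 0$ and the natural morphism
\[
\MO_X \;\hookrightarrow\; \bigoplus_{i=1}^n \MO_X / \MI_i
\]
is injective. Tensoring with the flat sheaf $L^{\otimes m} \otimes_{\MO_X} A^{-1}$ preserves injectivity, and then so does taking global sections, yielding an injection
\[
H^0\bigl(X, L^{\otimes m} \otimes_{\MO_X} A^{-1}\bigr) \;\hookrightarrow\; \bigoplus_{i=1}^n H^0\bigl(Y_i, (L^{\otimes m} \otimes_{\MO_X} A^{-1})|_{Y_i}\bigr).
\]
If $L$ is $f$-weakly big, the left-hand side is nonzero for some $m$, so $s|_{Y_i} \neq 0$ for some $i$; this shows $L|_{Y_i}$ is $f|_{Y_i}$-weakly big, and since $Y_i$ is irreducible this coincides with $f|_{Y_i}$-bigness.

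The only substantive ingredient is Lemma~\ref{E-main}, which produces the section witnessing (3) $\Rightarrow$ (1); the rest is a bookkeeping argument combining the definition of weak bigness with the basic fact that, on a reduced noetherian scheme, sections of a quasi-coherent sheaf are detected on the irreducible components. Accordingly, no new obstacle arises beyond what is already handled in that lemma.
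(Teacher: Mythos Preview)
Your proof is correct and follows essentially the same approach as the paper: reduce to $S$ affine and $X$ reduced, use Lemma~\ref{E-main} for (3) $\Rightarrow$ (1), and use the injection $H^0(X,L^{\otimes m}\otimes A^{-1})\hookrightarrow\bigoplus_i H^0(Y_i,(L^{\otimes m}\otimes A^{-1})|_{Y_i})$ (which the paper obtains via the Mayer--Vietoris sequence) for (1) $\Rightarrow$ (3). The only cosmetic difference is that the paper phrases (1) $\Rightarrow$ (3) as its contrapositive.
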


\begin{proof}
	By Proposition~\ref{prop:WB-affineness},
	we may assume that $S$ is affine.
	By the definition of $f$-weak bigness,
	it is clear that (1) is equivalent to (2).
	By Lemma~\ref{E-main}, (3) implies (2).
	We now show that (2) implies (3).
	Replacing $X$ with $X_\red$,
	we may assume that $X$ is reduced.
	Let $\sequence{X}{,}{n}$ be all the irreducible components of $X$.
	Let $A$ be an $f$-ample invertible sheaf on $X$.
	Assume that
	for any $i\in \{1,2,\dots,n\}$ and $m \in \Z_{>0}$,
	\[
	H^0(X_i, (L^{\otimes m} \otimes_{\MO_X} A^{-1})|_{X_i} ) = 0
	\]
	holds.
	By the Mayer--Vietoris sequence,
	we have the injective homomorphism 
	\[
	H^0(X,L^{\otimes m} \otimes_{\MO_X} A^{-1})
	\to 
	\bigoplus_{i=1}^n H^0(X_i, (L^{\otimes m} \otimes_{\MO_X} A^{-1})|_{X_i} ).
	\]
	Thus, we have 
	$H^0(X,L^{\otimes m} \otimes_{\MO_X} A^{-1}) = 0$
	for any $m \in \Z_{>0}$.
	Hence we have that (2) implies (3).
\end{proof}

\subsection{Exceptional loci}


\begin{dfn} \label{def:exceptional-locus}
	Let $f:X \to S$ be a proper morphism of noetherian schemes
	and let $L$ be an invertible sheaf on $X$.
	The {\em $f$-exceptional locus} $\E_f(L)$ of $L$ is 
	the set-theoretic union of all the integral closed subschemes $V$ of $X$
	such that $L|_V$ are not $f|_V$-big.
\end{dfn}

\begin{rem} \label{rem:exceptional-locus}
	By Theorem~\ref{WB-wb=b},
	the above definition 
	coincides with 
	the definition of $f$-exceptional locus in \cite{CT},
	which is the set-theoretic union of all the reduced closed subschemes $V$ of $X$
	such that $L|_V$ are not $f|_V$-weakly big.
\end{rem}

The proof of (\ref{enu:E=E|_Z-1}), (\ref{enu:E=X}), and (\ref{enu:E-closed}) of the following is the same as in \cite[Proposition~2.18]{CT}

\begin{prop}\label{prop:E-property}
	Let $f:X \to S$ be a projective morphism  
	of noetherian schemes 
	and let $L$ be an invertible sheaf on $X$. 
	\begin{enumerate}
		\item \label{enu:E-affineness}
			Let $S'$ be an open subscheme of $S$,
			let $X' \coloneqq X \times_S S'$, and
			$g:X' \to S'$ be the induced morphism.
			Then, we have
			\[
			\E_f(L) \cap X' = \E_g(L|_{X'}).
			\]
		\item \label{enu:E_red}
			$\E_f(L)=\E_{g}(L|_{X_\red})$ holds,
			where $g \coloneqq f|_{X_\red}$.
		\item\label{enu:E=E|_Z-1} 
			Let $m \in \Z_{>0}$, 
			and let $A$ be an $f$-ample invertible sheaf on $X$.
			Let $s \in H^0(X_\red,(L^{\otimes m}\otimes A^{-1})|_{X_\red})$,
			and let $Z \coloneqq Z(s)$.
			Then, $\E_f(L)=\E_{f|_Z}(L|_Z)$ holds.
		\item\label{enu:E=E|_Z-2} 
			Let $m \in \Z_{>0}$, 
			and let $A$ be an $f$-ample invertible sheaf on $X$.
			Let $s \in H^0(X,L^{\otimes m}\otimes A^{-1})$,
			and let $Z \coloneqq Z(s)$.
			Then, $\E_f(L)=\E_{f|_Z}(L|_Z)$ holds.
		\item \label{enu:E=X}
			$\E_f(L)=\Supp(X)$ if and only if $L$ is not $f$-weakly big.
		\item \label{enu:E-closed}
			$\E_f(L)$ is a closed subset of $X$.
		\item \label{enu:E-property-irr_comp}
			Let $V$ be an irreducible component of $\E_f(L)$
			equipped with the reduced scheme structure.
			Then $L|_V$ is not $f|_V$-big.
\end{enumerate}
\end{prop}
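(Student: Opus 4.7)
The plan is to establish the seven parts in order, each supporting the next. For (1), the open inclusion $X' \hookrightarrow X$ sets up a bijection $V \leftrightarrow V \cap X'$ between integral closed subschemes of $X$ meeting $X'$ and integral closed subschemes of $X'$ (with inverse $V' \mapsto \overline{V'}$), and Proposition~\ref{WB-affineness2} yields that $L|_V$ is $f|_V$-big if and only if $L|_{V \cap X'}$ is $g|_{V \cap X'}$-big, establishing the equality set-theoretically. For (2), every integral closed subscheme of $X$ is reduced and hence factors through $X_\red$, so the unions defining $\E_f(L)$ and $\E_{f|_{X_\red}}(L|_{X_\red})$ coincide.

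For (3) the inclusion $\E_{f|_Z}(L|_Z) \subseteq \E_f(L)$ is tautological, and the reverse reduces to the claim that every integral $V \subseteq X$ with $V \not\subseteq Z$ has $L|_V$ being $f|_V$-big. Given such a $V$, since $V$ is reduced, Propositions~\ref{prop:ZL-5} and~\ref{prop:ZL-3} force $s|_V \neq 0$ in $H^0(V,(L^{\otimes m} \otimes A^{-1})|_V)$, hence $(f|_V)_*((L^{\otimes m} \otimes A^{-1})|_V) \neq 0$; this makes $L|_V$ relatively weakly big, and since $V$ is irreducible it is $f|_V$-big. Part (4) follows from (3) applied to $s|_{X_\red}$ combined with (2), after noting via Proposition~\ref{prop:ZL-2} that $Z(s)$ and $Z(s|_{X_\red})$ share the same support.

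For (5), the implication $\Leftarrow$ uses Theorem~\ref{WB-wb=b}: if $L$ is not $f$-weakly big, every irreducible component $Y$ of $X_\red$ satisfies $L|_Y$ non-big, so each $Y \subseteq \E_f(L)$ and hence $\E_f(L) = \Supp(X)$. Conversely, every generic point $\eta$ of an irreducible component $Y$ of $X_\red$ lies in some integral $W$ with $L|_W$ non-big, and maximality of $\eta$ inside $X_\red$ forces the reduced subscheme $W$ to equal $Y$, so again Theorem~\ref{WB-wb=b} applies. For (6), I first reduce to $S$ affine via (1) and then run noetherian induction on $X$. When $L$ is not $f$-weakly big, (5) identifies $\E_f(L)$ with the closed set $\Supp(X)$; otherwise Theorem~\ref{WB-wb=b} together with Lemma~\ref{E-main} yields a section $s \in H^0(X_\red,(L^{\otimes m} \otimes A^{-1})|_{X_\red})$ whose zero locus $Z$ has $\Supp(Z) \subsetneq \Supp(X)$, and by (3) we obtain $\E_f(L) = \E_{f|_Z}(L|_Z)$, which is closed by the inductive hypothesis. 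The main obstacle is precisely this induction step, since its success hinges on Lemma~\ref{E-main} to guarantee that $Z$ is strictly smaller than $X$; the remaining parts are formal consequences of the weak-bigness framework already developed.

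Finally, for (7), let $V$ be an irreducible component of $\E_f(L)$ with reduced structure and generic point $\eta$. By definition there is an integral $W \subseteq X$ with $L|_W$ not $f|_W$-big and $\eta \in W$, so $V = \overline{\{\eta\}} \subseteq \Supp(W) \subseteq \E_f(L)$. Since $\Supp(W)$ is irreducible and $V$ is an irreducible component of $\E_f(L)$, one has $\Supp(W) = V$; as $W$ and $V$ are both reduced with the same support, $W = V$ as closed subschemes, and therefore $L|_V = L|_W$ is not $f|_V$-big.
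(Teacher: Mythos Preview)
Your proof is correct, and for parts (1)--(6) it follows essentially the same line as the paper (with only cosmetic differences: you derive (4) from (3) and (2) rather than repeating the argument, and in (6) you invoke Lemma~\ref{E-main} where the bare definition of weak bigness would already supply the needed nonzero section).

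The genuine divergence is in part (7). The paper argues by contradiction: assuming $L|_V$ is $f|_V$-big, it produces a nonzero section on $V$, lets $Z$ be its zero locus, and then shows via a closed-point argument that $V \setminus V' \subseteq \E_{f|_V}(L|_V) \subseteq Z$ (where $V'$ is the union of the other components of $\E_f(L)$), forcing $\Supp(V) = \Supp(Z)$, a contradiction. Your argument is direct and shorter: the generic point $\eta$ of $V$ lies in some integral $W$ with $L|_W$ not big, and since $V = \overline{\{\eta\}} \subseteq \Supp(W) \subseteq \E_f(L)$ with $\Supp(W)$ irreducible and $V$ a component, you get $W = V$ immediately. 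This bypasses the contradiction, the auxiliary section, and the closed-point analysis entirely; the only cost is that you must already know $\E_f(L)$ is closed (so that $V$ is closed in $X$ and $\overline{\{\eta\}} = V$), but the paper's proof of (7) uses (6) as well, so nothing is lost.
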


\begin{proof}
	(\ref{enu:E-affineness}) 
	It is enough to show that
	for any integral closed subscheme $V$ of $X$ 
	such that $V' \coloneqq V \times_X X' \neq \emptyset$,
	$L|_V$ is $f |_V$-big
	if and only if 
	$L|_{V'}$ is $g|_{V'}$-big.
	Note that $V' = V \times_S S'$.
	Thus the assertion follows from 
	Proposition~\ref{prop:WB-affineness} and Proposition~\ref{WB-affineness2}.
	
	(\ref{enu:E_red}) 
	$\E_f(L) \supseteq \E_{g}(L|_{X_\red})$ is clear.
	We show the opposite inclusion.
	Let $V$ be an integral closed subscheme of $X$
	such that $L|_V$ is not $f|_V$-big.
	Since $V$ is a closed subscheme of $X_\red$,
	$V \subseteq \E_{g}(L|_{X_\red})$ holds.
	Thus, we have $\E_f(L) \subseteq \E_{g}(L|_{X_\red})$.
	
	(\ref{enu:E=E|_Z-1}) 
	$\E_f(L) \supseteq \E_{f|_Z}(L|_Z)$ is clear.
	We show the opposite inclusion.
	Let $V$ be an integral closed subscheme of $X$
	such that 
	$L|_V$ is not $f|_V$-big.
	Then $s|_V \in H^0(V,(L^{\otimes m}\otimes A^{-1}) |_V)$ is equal to zero,
	which implies $\Supp(V) \subseteq \Supp(Z)$ 
	by Proposition~\ref{prop:ZL-3} and Proposition~\ref{prop:ZL-5}.
	Since $V$ is reduced, 
	$V$ is a closed subscheme of $Z$,
	and hence $V \subseteq \E_{f|_Z}(L|_Z)$ holds.
	Thus, we have $\E_f(L) \subseteq \E_{f|_Z}(L|_Z)$.
	
	(\ref{enu:E=E|_Z-2})
	The proof is the same as in (\ref{enu:E=E|_Z-1}).
	
	(\ref{enu:E=X}) 
	If $L$ is not $f$-weakly big,
	then for any irreducible component $Y$ of $X_\red$,
	$\Supp(Y) \subseteq \E_f(L)$ holds
	by Proposition~\ref{WB-wb=b}.
	Thus we have $\E_f(L) = \Supp(X)$.
	On the other hand, 
	if $L$ is $f$-weakly big,
	then $\E_f(L) \subseteq \Supp(Z(s))$ holds,
	where $s$ is a non-zero element in $H^0(X_\red, (L^{\otimes m} \otimes_{\MO_X} A^{-1})|_{X_\red})$
	and $A$ is an $f$-ample invertible sheaf on $X$.
	Thus we have $\E_f(L) \neq \Supp(X)$.
	
	(\ref{enu:E-closed}) 
	By (\ref{enu:E-affineness}),
	we may assume that $S$ is affine.
	By (\ref{enu:E_red}),
	we may assume that $X$ is reduced.
	If $L$ is not $f$-weakly big,
	by (\ref{enu:E=X}),
	we have $\E_f(L) = X$,
	which is closed.
	Thus, we may assume that $L$ is $f$-weakly big.
	Then, there is a nonzero element 
	$s \in H^0(X, L^{\otimes m} \otimes_{\MO_X} A^{-1})$
	for some $m \in \Z_{>0}$
	and $f$-ample invertible sheaf $A$ on $X$.
	Let $Z \coloneqq Z(s)$.
	By Proposition~\ref{prop:ZL-3},
	we have $\Supp(Z) \subsetneq \Supp(X)$.
	By (\ref{enu:E=E|_Z-2}),
	$\E_f(L) = \E_{f|_Z}(L|_Z)$ holds.
	By noetherian induction,
	we may assume that
	$\E_{f|_Z}(L|_Z)$ is a closed subset of $Z$.
	Hence it is also a closed subset of $X$.
	Thus, (\ref{enu:E-closed}) holds.
	
	(\ref{enu:E-property-irr_comp}) 
	Assume that $V$ is $f|_V$-big.
	Let $A$ be an $f|_V$-ample invertible sheaf on $V$.
	Let 
	$s \in H^0(V,(L|_V)^{\otimes m}\otimes A^{-1})$ 
	be a nonzero element
	with $m \in \Z_{>0}$,
	and let $Z \coloneqq Z(s)$.
	Then we have a set-theoretic inclusion
	\[
	\E_{f|_V}(L|_V)
	= \E_{f|_Z}(L|_Z)
	\subseteq Z,
	\]
	where the first equality follows from (\ref{enu:E=E|_Z-2}).
	Let $V'$ be the union of the other irreducible components of $\E_f(L)$ equipped with the reduced scheme structure.
	Let $v \in V \setminus V'$ be a closed point.
	Since $v \in V \subseteq \E_f(L)$, 
	there is an integral closed subscheme $W$ of $X$
	such that $v \in W$ and $L|_W$ is not $f|_W$-big.
	In other words, we have
	\[
	v \in W \subseteq \E_f(L) = V \cup V'.
	\]
	Since $v \notin V'$, 
	we have $W \nsubseteq V'$.
	Since $W$ is irreducible, 
	we have $W \subseteq V$.
	Since $(L|_V)|_W = L|_W$ is not $f|_W$-big,
	we have $W \subseteq \E_{f|_V}(L|_V)$.
	Since $v \in W$,
	we have $v \in \E_{f|_V}(L|_V)$.
	Thus, 
	$V \setminus V' \subseteq \E_{f|_V}(L|_V)$ holds.
	Since $\E_{f|_V}(L|_V)$ is a closed subset of $V$ 
	by (\ref{enu:E-closed}),
	$\overline{V \setminus V'} \subseteq \E_{f|_V}(L|_V)$
	holds,
	where $\overline{V \setminus V'}$ is the closure of $V \setminus V'$ in $V$.
	Thus we have a set-theoretic inclusion
	\[
	V 
	= \overline{V \setminus V'}
	\subseteq \E_{f|_V}(L|_V) 
	\subseteq Z.
	\]
	Hence 
	$\Supp(V) = \Supp(Z)$ holds.
	On the other hand,
	since $V$ is reduced and $s \neq 0$,
	we have $\Supp(Z) \neq \Supp(V)$
	by Proposition~\ref{prop:ZL-3}.
	These two estimates imply a contradiction.
\end{proof}

\section{Keel's theorem}\label{sec:K}


The purpose of this section is 
to show the relative versions of 
Nakamaye's theorem (see Theorem~\ref{K-main1})
and
Keel's result (see Theorem~\ref{K-main2}) for noetherian schemes.

In this section, we use the following notation.
%
%
Let $X$ be a scheme, and 
let $\sequence{Y}{,}{n}$, and $Y_\lambda$ ($\lambda \in \Lambda$) 
be closed subschemes of $X$.
Then, 
$\bigcup_{i =1}^n Y_i$ denotes 
the scheme-theoretic union of $\sequence{Y}{,}{n}$.
$\bigcap_{\lambda \in \Lambda} Y_\lambda$ denotes 
the scheme-theoretic intersection of $Y_\lambda$ ($\lambda \in \Lambda$).
$Y_1 \subseteq Y_2$ denotes that
$Y_1$ is a closed subscheme of $Y_2$.
To avoid confusion,
we use the notation $\Supp(Y)$
when a closed subscheme $Y$ of $X$ is considered as a closed subset of $X$.

\begin{lem}\label{lem:K-1}
	Let $f:X \to S$ be a projective morphism of noetherian schemes,
	with $S$ irreducible.
	Let $d \coloneqq \dim(X/S)$.
	Let $\MF$ be a coherent sheaf on $X$, and
	let $L$ be an $f$-nef invertible sheaf on $X$.
	Then, for any $j \in \Z_{\ge0}$, 
	there is a positive real number $C \in \R_{>0}$
	such that for sufficiently large $m \in \Z_{>0}$,
	\[
	\glen_S\,R^j f_* (\MF \otimes_{\MO_X} L ^{\otimes m})
	\le C m^{d-j}
	\]
	holds.
\end{lem}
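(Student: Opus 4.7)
The plan is to follow the two-step reduction indicated in the introduction: first reduce to the case where $X$ is reduced via a filtration by powers of the nilradical, and then reduce to the case where the base is the spectrum of a field by passing to the generic point of $S$. The field-base case is the known asymptotic bound of \cite{Bir17}, and the main technical point to verify is the extension of that bound from $f_*$ to $R^j f_*$.

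For the first reduction, let $\MN$ denote the nilradical ideal sheaf of $X$; since $X$ is noetherian, $\MN^N = 0$ for some $N$. I would filter $\MF$ by the subsheaves $\MN^i \MF$ and consider the short exact sequences $0 \to \MN^{i+1}\MF \to \MN^i\MF \to \MG_i \to 0$ with $\MG_i := \MN^i\MF/\MN^{i+1}\MF$. Tensoring by the (flat) invertible sheaf $L^{\otimes m}$, applying the long exact sequence for $R^j f_*$, and using additivity of $\glen_S$ on exact sequences (Remark~\ref{rem:GR-1}), a descending induction on $i$ yields
\[
\glen_S\, R^j f_*(\MF \otimes L^{\otimes m}) \;\le\; \sum_{i=0}^{N-1} \glen_S\, R^j f_*(\MG_i \otimes L^{\otimes m}),
\]
so it suffices to bound each summand. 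Each $\MG_i$ is annihilated by $\MN$, hence is the direct image $\iota_*\MG_i'$ of a coherent sheaf on $X_\red$ along the closed immersion $\iota : X_\red \hookrightarrow X$; by the projection formula and the vanishing of $R^q \iota_*$ for $q > 0$,
\[
R^j f_*(\MG_i \otimes L^{\otimes m}) \simeq R^j (f|_{X_\red})_*\bigl(\MG_i' \otimes (L|_{X_\red})^{\otimes m}\bigr).
\]
Since $L|_{X_\red}$ is $(f|_{X_\red})$-nef and $\dim(X_\red/S) = d$ by Proposition~\ref{RD-2}, this reduces the statement to the case where $X$ is reduced.

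Assuming $X$ is reduced, the morphism $f$ factors uniquely through $S_\red$ (nilpotents in $\MO_S$ must map to zero in the reduced $\MO_X$), so each $R^j f_*(\MF \otimes L^{\otimes m})$ is in fact an $\MO_{S_\red}$-module, and by Remark~\ref{rem:GR-1} its generic $S$-length equals its generic $S_\red$-length; I may therefore assume $S$ is integral. Letting $\sigma$ be the generic point and $K := K(S)$, flat base change along the localization $\Spec \MO_{S,\sigma} \to S$ together with Remark~\ref{rem:GR-1} gives
\[
\glen_S\, R^j f_*(\MF \otimes L^{\otimes m}) \;=\; \dim_K H^j\bigl(X_\sigma,\, \MF|_{X_\sigma} \otimes (L|_{X_\sigma})^{\otimes m}\bigr),
\]
where $X_\sigma$ is projective over $K$ with $\dim X_\sigma = d$ and $L|_{X_\sigma}$ is nef. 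The required bound then follows from the field-case asymptotic estimate.

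The main obstacle is this last field-case estimate for $j \ge 1$, since the version cited in Lemma~\ref{WB-upper bound} handles only $j = 0$. For $j > d$ the bound holds trivially by Grothendieck vanishing on $X_\sigma$. For $1 \le j \le d$, the required estimate $\dim_K H^j(X_\sigma, \MF \otimes L^{\otimes m}) = O(m^{d-j})$ is the standard asymptotic cohomology bound for nef invertible sheaves on projective schemes over a field, which I expect to extract directly from \cite{Bir17}, or else to prove by induction on $d$: take a general member $H$ of a very ample linear system on $X_\sigma$ avoiding the finitely many associated points of $\MF|_{X_\sigma}$, and combine the long exact sequence arising from $0 \to \MF(-H) \otimes L^{\otimes m} \to \MF \otimes L^{\otimes m} \to \MF|_H \otimes L^{\otimes m}|_H \to 0$ with Fujita's vanishing theorem to bound the cohomology inductively in terms of the $(d-1)$-dimensional case on $H$.
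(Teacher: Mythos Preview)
Your proof is correct and follows the same overall strategy as the paper: a nilradical filtration to reduce to sheaves supported on $X_\red$, then passage to the generic fibre to reduce to the field case, which is \cite[Proposition~4.4(2)]{Bir17}.

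The one notable difference is in how the nilradical step is organised. You filter $\MF$ directly by the subsheaves $\MN^i\MF$, so that each graded piece $\MG_i$ is automatically an $\MO_{X_\red}$-module, and a single descending induction on $i$ finishes the reduction. The paper instead first proves the estimate for sheaves of the special form $\MN^i \otimes_{\MO_X} \ME$ with $\ME$ locally free (using the filtration of $\MO_X$ by powers of $\MN$, which stays exact after tensoring precisely because $\ME$ is flat), and then handles a general $\MF$ by choosing a locally free surjection $\ME \twoheadrightarrow \MF$ with kernel $\MK$ and running a second descending induction, this time on $j$, via the boundary map $H^j(\MF \otimes L^{\otimes m}) \to H^{j+1}(\MK \otimes L^{\otimes m})$. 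Your route is more economical: it avoids the locally free resolution and the induction on $j$ entirely, at no cost, since Birkar's field-case bound already applies to arbitrary coherent sheaves. Both arguments land on the same citation for the endgame, and your sketched hyperplane-plus-Fujita alternative for that bound is indeed the standard proof.
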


\begin{proof}
	We may assume that $S$ is affine 
	and written as $S = \Spec\,R$
	with a noetherian ring $R$.
	Let $R' \coloneqq R/ \sqrt{0}$, and 
	let $\sigma$ be the generic point of $S$.
	Let $\MN$ be the nilradical ideal sheaf on $X$.
	First, we show the case where $\MF = \MN^i \otimes_{\MO_X} \ME$ 
	for a coherent locally free sheaf $\ME$ on $X$ and
	$i \in \Z_{\ge0}$.
	Since $X$ is noetherian,
	$\MN^{l+1} = 0$ holds for some $l \in \Z_{>0}$.
	Hence the assertion clearly holds if $i \ge l+1$.
	Let $M_m = \ME \otimes_{\MO_X} L ^{\otimes m}$.
	For every $i \in \{0,1,\dots,l\}$,
	the exact sequence
	\[
	0
	\to \MN^{i+1}
	\to \MN^i
	\to \MN^i/\MN^{i+1}
	\to 0
	\]
	induces the exact sequence
	\[
 	H^j(X,\MN^{i+1} \otimes_{\MO_X} M_m)
	\to H^j(X,\MN^i \otimes_{\MO_X} M_m)
	\to H^j(X,(\MN^i/\MN^{i+1}) \otimes_{\MO_X} M_m).
	\]
	By descending induction on $i$,
	it is enough to show the case 
	where $\MF = (\MN^i/\MN^{i+1}) \otimes_{\MO_X} \ME$
	for $i \in \{0,1,\dots,l\}$.
	Since $\MN^i/\MN^{i+1}$ is $(\MO_X/\MN)$-module,
	we have
	\[
	H^j(X,(\MN^i/\MN^{i+1}) \otimes_{\MO_X} M_m) 
	= H^j(X_\red,((\MN^i/\MN^{i+1}) \otimes_{\MO_X} M_m)|_{X_\red}).
	\]
	Since $f(X_\red)$ is a closed subscheme of $S_\red$,
	this cohomology is also an $R'$-module.
	Thus, 
	there is a positive real number $C \in \R_{>0}$
	such that
	for sufficiently large $m \in \Z_{>0}$,
	\begin{align*}
			&\glen_R\,H^j(X,(\MN^i/\MN^{i+1}) \otimes_{\MO_X} M_m) \\
	=		&\glen_{R'}\,H^j(X_\red,((\MN^i/\MN^{i+1}) \otimes_{\MO_X} M_m)|_{X_\red}) \\
	=		&\dim_{K(R')} H^j(X_{\red,\sigma},
				((\MN^i/\MN^{i+1}) \otimes_{\MO_X} M_m)|_{X_{\red,\sigma}}) \\
	\le 	&C m^{d-j}
	\end{align*}
	holds.
	The first and second equalities follow from Remark~\ref{rem:GR-1},
	and
	the last inequality follows from \cite[Proposition~4.4~(2)]{Bir17}.
	Thus we have shown the case 
	where $\MF = \MN^i \otimes_{\MO_X} \ME$ for $i \in \Z_{\ge0}$.
	In particular, the assertion holds if $\MF$ is locally free.
	
	Second, we show the general case.
	We can write $\MF$ as a quotiont of a coherent locally free sheaf $\ME$.
	Let $\MK$ be the kernel sheaf of the natural sheaf homomorphism $\ME \to \MF$.
	Note that also $\MK$ is coherent.
	We obtain the exact sequence 
	\[
	0
	\to \MK
	\to \ME
	\to \MF
	\to 0.
	\] 
	This induces the exact sequence 
	\[
	H^j(X, \ME \otimes_{\MO_X} L^{\otimes m})
	\to H^j(X, \MF \otimes_{\MO_X} L^{\otimes m})
	\to H^{j+1}(X, \MK \otimes_{\MO_X} L^{\otimes m}).
	\]
	By descending induction on $j$,
	there is a positive real number $C_1 \in \R_{>0}$
	such that
	\[
	\glen_R\,  H^{j+1}(X, \MK \otimes_{\MO_X} L^{\otimes m})
	\le C_1 m^{d-j-1}
	\]
	holds
	for sufficiently large $m \in \Z_{>0}$.
	On the other hand,
	by the case where $\MF$ is locally free,
	there is a positive real number $C_2 \in \R_{>0}$
	such that
	\[
	\glen_R\, H^j(X, \ME \otimes_{\MO_X} L^{\otimes m})
	\le C_2 m^{d-j}
	\]
	holds
	for sufficiently large $m \in \Z_{>0}$.
	Hence we have
	\begin{align*}
			&\glen_R\, H^j(X, \MF \otimes_{\MO_X} L^{\otimes m}) \\
	\le 	&\glen_R\, H^{j+1}(X, \MK \otimes_{\MO_X} L^{\otimes m})
				+ \glen_R\, H^j(X, \ME \otimes_{\MO_X} L^{\otimes m}) \\
	\le 	&C_1 m^{d-j-1} + C_2 m^{d-j} \\
	\le 	&(C_1 + C_2) m^{d-j}
	\end{align*}
	for sufficiently large $m \in \Z_{>0}$.
\end{proof}

\begin{lem}\label{lem:K-3}
	Let $f:X \to S$ be a projective surjective morphism 
	of irreducible noetherian schemes.
	Let $d \coloneqq \dim(X/S)$.
	Let $\ME$ be a coherent locally free sheaf on $X$, and
	let $L$ be an $f$-nef invertible sheaf on $X$.
	If $L$ is $f$-big,
	there is a positive real number $C \in \R_{>0}$
	such that for sufficiently large $m \in \Z_{>0}$,
	\[
	\glen_S\, f_* (\ME \otimes_{\MO_X} L ^{\otimes m})
	\ge C m^{d}
	\]
	holds.
\end{lem}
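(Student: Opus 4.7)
The plan is to mirror the structure of Lemma~\ref{lem:K-1} but now extract a lower bound via the nilradical filtration, the key point being that all "nilpotent error terms" end up controlled by the $R^1 f_*$ upper bound from Lemma~\ref{lem:K-1}. First, using Remark~\ref{GR-affineness} I replace $S$ by a non-empty open affine subscheme and write $S = \Spec\, R$ with $R$ a noetherian ring having a unique minimal prime (this persists because $S$ is irreducible and an irreducible topological space remains irreducible on restriction to any non-empty open).

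The next step is to reduce to the case where $X$ is integral. Let $\MN$ be the nilradical ideal sheaf of $X$. From the short exact sequence
\[
0 \to \MN \otimes_{\MO_X} \ME \otimes_{\MO_X} L^{\otimes m} \to \ME \otimes_{\MO_X} L^{\otimes m} \to (\ME \otimes_{\MO_X} L^{\otimes m})|_{X_\red} \to 0
\]
and the resulting four-term pushforward exact sequence, additivity of $\glen_S$ applied to the short exact sequence of the form $0 \to \ker \to f_*(\ME\otimes L^{\otimes m}) \to \Image \to 0$, together with $\Image \subseteq f_*((\ME\otimes L^{\otimes m})|_{X_\red})$ of cokernel inside $R^1 f_*(\MN \otimes \ME \otimes L^{\otimes m})$, yields
\[
\glen_S\, f_*(\ME \otimes_{\MO_X} L^{\otimes m}) \ge \glen_S\, (f|_{X_\red})_*\bigl((\ME|_{X_\red}) \otimes (L|_{X_\red})^{\otimes m}\bigr) - \glen_S\, R^1 f_*(\MN \otimes_{\MO_X} \ME \otimes_{\MO_X} L^{\otimes m}).
\]
By Lemma~\ref{lem:K-1}, the subtracted term is at most $C_1 m^{d-1}$ for some $C_1 \in \R_{>0}$, so it suffices to prove the desired lower bound on $X_\red$; and $X_\red$ is integral since $X$ is irreducible.

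Assume now $X$ is integral. Then $f_* \MO_X$ is a sheaf of reduced rings, so $f^\#$ factors through $\MO_{S_\red}$ and we may replace $S$ by $S_\red$. Since $S$ is irreducible, $S_\red$ is integral; and $\glen_S$ is unchanged on $\MO_{S_\red}$-modules because the unique minimal prime of $R$ equals $\sqrt{0}$, so localizing at it kills nilpotents and yields $K(R_\red)$. At this point $f:X \to S$ is a projective surjective morphism of integral noetherian schemes with $\ME$ locally free, $L$ both $f$-nef and $f$-big, and we preserve $d = \dim(X/S)$.

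For the final step, let $\sigma$ be the generic point of $S$. By Remark~\ref{rem:GR-2},
\[
\glen_S\, f_*(\ME \otimes_{\MO_X} L^{\otimes m}) = \dim_{K(S)} H^0\bigl(X_\sigma, (\ME|_{X_\sigma}) \otimes (L|_{X_\sigma})^{\otimes m}\bigr).
\]
On the generic fibre $X_\sigma \to \Spec\, K(S)$, which is projective of dimension $d$, the sheaf $L|_{X_\sigma}$ remains nef and big and $\ME|_{X_\sigma}$ remains locally free, so the classical lower bound \cite[Proposition~4.4~(1)]{Bir17} over a field supplies a constant $C \in \R_{>0}$ with this dimension at least $C m^d$ for all sufficiently large $m$. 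The main obstacle is the bookkeeping in the nilpotent reduction: one must verify that the $R^1 f_*$ contribution is genuinely of order $m^{d-1}$, not $m^d$, which is exactly what Lemma~\ref{lem:K-1} delivers and why that lemma had to be proved first.
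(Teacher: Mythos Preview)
Your proof is correct and follows essentially the same approach as the paper's: reduce to $S$ affine, use the short exact sequence coming from the nilradical $\MN$ of $X$ together with the $R^1$ upper bound from Lemma~\ref{lem:K-1} to pass to $X_\red$, and then invoke Birkar's lower bound on the generic fibre. The only cosmetic difference is that the paper works directly with $R' = R/\sqrt{0}$ rather than first replacing $S$ by $S_\red$, and it cites \cite[Proposition~4.3]{Bir17} (rather than Proposition~4.4(1)) for the field case; you should also explicitly justify that $L|_{X_\sigma}$ is big via Proposition~\ref{WB-lower bound}.
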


\begin{proof}
	By Proposition~\ref{prop:WB-affineness},
	there is a non-empty open affine subscheme $S'$ of $S$
	such that 
	$L|_{X'}$ is $g$-weakly big,
	where $X' \coloneqq X \times_S S'$ 
	and $g:X' \to S'$ is the natural morphism.
	Thus, by replacing $f:X \to S$ with $g:X' \to S'$,
	we may assume that 
	$S$ is affine and written as $S = \Spec\, R$
	with a noetherian ring $R$.
	Let $\MN$ be the nilradical ideal sheaf of $X$, and
	let $R' \coloneqq R/\sqrt{0}$.
	Let $M_m \coloneqq \ME \otimes L^{\otimes m}$.
	The exact sequence 
	\[
 	0
	\to \MN
	\to \MO_X
	\to \MO_{X_\red}
	\to 0
	\]
	induces the exact sequence
	\[
	H^0(X,M_m)
	\to H^0(X_\red,M_m|_{X_\red})
	\to H^{1}(X,\MN \otimes M_m).
	\]
	By Lemma~\ref{lem:K-1},
	there is a positive real number $C_1 \in \R_{>0}$
	such that
	for sufficiently large $m \in \Z_{>0}$,
	$H^{1}(X,\MN \otimes M_m) \le C_1 m^{d-1}$
	holds.
	Let $\sigma$ be the generic point of $S$.
	By Proposition~\ref{WB-lower bound},
	$L|_{X_{\red,\sigma}}$ is big over $\Spec\, K(R')$.
	Thus, 
	there is a positive real number $C_2 \in \R_{>0}$
	such that
	for sufficiently large $m \in \Z_{>0}$,
	\begin{align*}
			\glen_R\, H^0(X_\red,M_m|_{X_\red}) 
	=		&\glen_{R'}\, H^0(X_\red,M_m|_{X_\red}) \\
	=		&\dim_{K(R')} H^0(X_{\red,\sigma},
				M_m|_{X_{\red,\sigma}}) \\
	\ge 	&C_2 m^{d}
	\end{align*}
	holds.
	The first and second equalities follow from Remark~\ref{rem:GR-1},
	and the last inequality follows from \cite[Proposition~4.3]{Bir17}.
	Thus, for sufficiently large $m \in \Z_{>0}$, 
	we have
	\begin{align*}
			&\glen_R\, H^0(X,M_m) \\
	\ge 	&\glen_R\, H^0(X_\red,M_m|_{X_\red})
			 - \glen_R\, H^{1}(X,\MN \otimes_{\MO_X} M_m) \\
	\ge 	&C_2 m^{d} - C_1 m^{d-1} \\
	\ge 	&C m^{d},
	\end{align*}
	where $C \coloneqq C_2 / 2$.
\end{proof}

\begin{lem}\label{lem:K-4}
	Let $f:X \to S$ be a projective morphism of noetherian schemes,
	with $S$ affine.
	Let $L$ be an $f$-nef invertible sheaf on $X$, and
	let $A$ be an $f$-ample invertible sheaf on $X$.
	Assume that 
	$L$ is $f$-weakly big.
	Then, for sufficiently large $m \in \Z_{>0}$,
	there is a section $s \in H^0(X,L^{\otimes m}\otimes_{\MO_X} A^{-1})$
	such that 
	$s|_{X_\red} \neq 0$ holds.
\end{lem}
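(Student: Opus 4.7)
The plan is to follow the route outlined in the introduction, proving the two special cases where $X$ is reduced or irreducible and then combining them for the general statement. Write $M_m := L^{\otimes m} \otimes_{\MO_X} A^{-1}$.

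When $X$ is reduced, the claim is essentially Lemma~\ref{E-main}: Theorem~\ref{WB-wb=b} supplies an irreducible component $X_1$ of $X$ on which $L|_{X_1}$ is $f|_{X_1}$-big, and Lemma~\ref{E-main} then yields, for large $m$, a section $s \in H^0(X, M_m)$ with $\Supp(X_1) \not\subseteq \Supp(Z(s))$; such an $s$ is necessarily nonzero, and since $X = X_{\red}$ this gives $s|_{X_{\red}} \neq 0$. When $X$ is irreducible, after replacing $S$ by the integral affine scheme $f(X_{\red})$, I would apply cohomology to the nilradical sequence $0 \to \MN \otimes_{\MO_X} M_m \to M_m \to M_m|_{X_{\red}} \to 0$ to obtain
\[
H^0(X, M_m) \to H^0(X_{\red}, M_m|_{X_{\red}}) \to H^1(X, \MN \otimes_{\MO_X} M_m),
\]
and combine the lower bound $\glen_S H^0(X_{\red}, M_m|_{X_{\red}}) \ge C_1 m^d$ from Lemma~\ref{lem:K-3} (with $d = \dim(X/S)$; $X_{\red}$ integral; $L|_{X_{\red}}$ nef and big) with the upper bound $\glen_S H^1(X, \MN \otimes_{\MO_X} M_m) \le C_2 m^{d-1}$ from Lemma~\ref{lem:K-1} (applied to $\MF = \MN \otimes_{\MO_X} A^{-1}$, $j=1$). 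For large $m$ the difference is positive, so the first map has nonzero image, producing the desired $s$.

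For the general case, I would fix an irredundant decomposition $X = X_1 \cup \cdots \cup X_n$ (Definition-Proposition~\ref{Dec-scheme}) arranged via Theorem~\ref{WB-wb=b} so that $L|_{X_{1,\red}}$ is $f|_{X_{1,\red}}$-big, and set $X' := X_2 \cup \cdots \cup X_n$, $T := X_1 \cap X'$, and $R_1 := \Gamma(f(X_1), \MO_{f(X_1)})$. Combine two tools: the Mayer--Vietoris sequence
\[
H^0(X, M_m) \to H^0(X_1, M_m|_{X_1}) \oplus H^0(X', M_m|_{X'}) \to H^0(T, M_m|_T)
\]
arising from $0 \to \MO_X \to \MO_{X_1} \oplus \MO_{X'} \to \MO_T \to 0$, and the irreducible-case cohomology argument applied inside $X_1$. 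Writing $r \colon H^0(X_1, M_m|_{X_1}) \to H^0(X_{1,\red}, M_m|_{X_{1,\red}})$ for restriction, and $g$ for the composition of restriction $H^0(X_1, M_m|_{X_1}) \to H^0(T, M_m|_T)$ with the projection modulo the image of $H^0(X', M_m|_{X'})$, every element of $\ker g$ lifts to a section on $X$ by Mayer--Vietoris exactness. A short diagram chase gives
\[
\glen_{R_1} r(\ker g) \ge \glen_{R_1} r(H^0(X_1, M_m|_{X_1})) - \glen_{R_1} H^0(T, M_m|_T).
\]
Setting $d := \dim(X_1/f(X_1))$, Lemma~\ref{lem:K-3} for $X_{1,\red} \to f(X_{1,\red})$ together with Lemma~\ref{lem:K-1} for $\MF = \MN_{X_1} \otimes_{\MO_{X_1}} A^{-1}|_{X_1}$ and $j = 1$ yield $\glen_{R_1} r(H^0(X_1, M_m|_{X_1})) \ge C_1 m^d - C_3 m^{d-1}$, while Lemma~\ref{lem:K-1} on $T$ (using $\dim(T/f(X_1)) \le d-1$ from Proposition~\ref{RD-<}) yields $\glen_{R_1} H^0(T, M_m|_T) \le C_2 m^{d-1}$. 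For large $m$ the right-hand side is strictly positive, so $r(\ker g) \neq 0$, and the corresponding lift $s \in H^0(X, M_m)$ satisfies $s|_{X_{1,\red}} \neq 0$, hence $s|_{X_{\red}} \neq 0$.

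The main obstacle will be the generic-length bookkeeping in the combined argument: although $H^0(X, M_m)$ and $H^0(X', M_m|_{X'})$ are only $R$-modules with $R = \Gamma(S, \MO_S)$ possibly not a domain, their images in the $R_1$-modules $H^0(X_1, M_m|_{X_1})$ and $H^0(T, M_m|_T)$ are automatically $R_1$-submodules because $\ker(R \to R_1)$ annihilates these codomains. Verifying this carefully, and confirming that Lemmas~\ref{lem:K-1} and~\ref{lem:K-3} really do apply over the irreducible base $f(X_1)$ to the restricted morphisms $X_1 \to f(X_1)$ and $T \to f(X_1)$ (rather than over $S$), will be the most delicate part.
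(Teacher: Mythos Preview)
Your proposal is correct and follows essentially the same approach as the paper's proof. The paper skips the separate reduced and irreducible cases and goes directly to the combined argument, but your general-case argument (irredundant decomposition, Mayer--Vietoris for $X = X_1 \cup X'$, the nilradical sequence on $X_1$, and the generic-length estimates over $R_1 = \Gamma(f(X_1),\MO_{f(X_1)})$ via Lemmas~\ref{lem:K-1} and~\ref{lem:K-3}) matches the paper's proof; in particular your $\ker g$ equals the paper's $\Im\psi_m$, and your direct inequality $\glen_{R_1} r(\ker g) > 0$ is equivalent to the paper's contradiction from assuming $\phi_m\circ\psi_m = 0$.
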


\begin{proof}
	Since $L$ is $f$-weakly big,
	$X \neq \emptyset$ holds.
	By Theorem~\ref{WB-wb=b},
	there is an irreducible component $V$ of $X_\red$
	such that $L|_V$ is $f$-big.
	Let $X = \bigcup_{i=1}^n X_i$ be an irredundant decomposition of $X$
	(cf. Definition-Proposition~\ref{Dec-scheme}).
	Then, for some $j \in \{1,2,\dots,n\}$,
	$V = X_{j,\red}$ holds.
	In particular, $L|_{X_j}$ is $f|_{X_j}$-big.
	After permuting the indices,
	we may assume $j=1$.
	Let $\MN$ be the nilradical ideal sheaf of $X_1$.
	Let $M_m \coloneqq L^{\otimes m}\otimes A^{-1}$.
	The exact sequence 
	\[
	0
	\to \MN
	\to \MO_{X_1}
	\to \MO_{X_{1,\red}}
	\to 0
	\]
	induces the exact sequence 
	\[
	H^0(X_1, M_m |_{X_1})
	\stackrel{\phi_m}{\longrightarrow} 
	H^0(X_{1,\red}, M_m |_{X_{1,\red}})
	\stackrel{\phi'_m}{\longrightarrow} 
	H^1(X_1, \MN \otimes_{\MO_{X_1}} M_m |_{X_{1}})
	\]
	Let $d \coloneqq \dim(X_1/f(X_1))$.
	Note that $d \ge 0$.
	We write $f(X_1) = \Spec\, R$ with a noetherian ring $R$.
	By Lemma~\ref{lem:K-1},
	there is a positive real number $C_1 \in \R_{>0}$
	such that
	for sufficiently large $m \in \Z_{>0}$,
	$\glen_R\, H^1(X_1, \MN \otimes_{\MO_{X_1}} M_m |_{X_{1}}) \le C_1 m^{d-1}$ holds.
	By Lemma~\ref{lem:K-3},
	there is a positive real number $C_2 \in \R_{>0}$
	such that
	for sufficiently large $m \in \Z_{>0}$,
	$\glen_R\, H^0(X_{1,\red}, M_m |_{X_{1,\red}}) \ge C_2 m^{d}$ holds.
	Thus we have 
	\[
	\glen_R\, \Im\, \phi_m
	= \glen_R\, \Ker\, \phi'_m
	\ge C_2 m^d - C_1 m^{d-1}.
	\]
	Let $X' \coloneqq \bigcup_{i=2}^n X_i$, and
	let $T \coloneqq X_1 \cap X'$.
	The Mayer--Vietoris sequence
	\[
	0
	\to \MO_X
	\to \MO_{X_1} \oplus \MO_{X'}
	\to \MO_T
	\to 0
	\]
	induces the exact sequence
	\[
	0
	\to H^0(X,M_m)
	\to H^0(X_1,M_m|_{X_1}) \oplus H^0(X',M_m|_{X'})
	\to H^0(T,M_m|_T).
	\]
	Let $\psi_m:H^0(X,M_m) \to H^0(X_1,M_m|_{X_1})$ and 
	$\psi'_m:H^0(X_1,M_m|_{X_1}) \to H^0(T,M_m|_T)$ be 
	the natural homomorphisms.
	Then,
	from the above exact sequence,
	we have 
	$\Im\, \psi_m \supseteq \Ker\, \psi'_m$.
	By Lemma~\ref{lem:K-1},
	there is a positive real number $C_3 \in \R_{>0}$
	such that
	for sufficiently large $m \in \Z_{>0}$,
	$\glen_R\, H^0(T,M_m|_T) \le C_3 m^{d-1}$ holds.
	Assume that 
	$\phi_m \circ \psi_m = 0$,
	i.e.,
	$\Im\, \psi_m \subseteq \Ker\, \phi_m$.
	Then 
	\begin{align*}
	C_2 m^d - C_1 m^{d-1}
	&\le
	\glen_R\, \Im\, \phi_m \\
	&= 
	\glen_R\, H^0(X_1, M_n |_{X_1}) - \glen_R\, \Ker\, \phi_m \\
	&\le
	\glen_R\, H^0(X_1, M_n |_{X_1}) - \glen_R\, \Im\, \psi_m \\
	&\le 
	\glen_R\, H^0(X_1, M_n |_{X_1}) - \glen_R\, \Ker\, \psi'_m \\
	&=
	\glen_R\, \Im\, \psi'_m \\
	&\le 
	C_3 m^{d-1}
	\end{align*}
	holds.
	Thus we have 
	$\phi_m \circ \psi_m \neq 0$,
	which implies the assertion.
\end{proof}

\begin{lem}\label{lem:K-6}
	Let $X$ be a noetherian scheme,
	and let $V$ and $W$ be closed subschemes of $X$.
	Let $\MI$ (resp. $\MJ$) be the ideal sheaf of $V$ (resp. $W$).
	If $\Supp(V) \subseteq \Supp(W)$ holds,
	then $V \subseteq W^r$ holds
	for sufficiently large $r \in \Z_{>0}$,
	where $W^r$ is the closed subscheme of $X$ 
	determined by the coherent ideal sheaf $\MJ^r$ on $X$.
\end{lem}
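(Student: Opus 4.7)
The plan is to reduce to a local commutative-algebra statement and invoke noetherianness twice: once to cover $X$ by finitely many affine opens, and once on each affine to turn radical containment into power containment.

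First I would translate the desired scheme-theoretic inclusion $V \subseteq W^r$ into the sheaf statement $\MJ^r \subseteq \MI$, which can be checked locally. Since $X$ is noetherian, hence quasi-compact, I can choose a finite open affine cover $X = \bigcup_{i=1}^N U_i$ with $U_i = \Spec A_i$ and set $I_i \coloneqq \MI(U_i)$, $J_i \coloneqq \MJ(U_i)$; it suffices to produce a single $r$ such that $J_i^r \subseteq I_i$ for every $i$.

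On each $U_i$, the hypothesis $\Supp(V) \subseteq \Supp(W)$ restricts to $V(I_i) \subseteq V(J_i)$ in $\Spec A_i$, which by the standard correspondence between closed subsets and radical ideals gives $J_i \subseteq \sqrt{I_i}$. Since $A_i$ is noetherian, $J_i$ is finitely generated, say by $a_{i,1},\dots,a_{i,k_i}$; each generator satisfies $a_{i,j}^{n_{i,j}} \in I_i$ for some $n_{i,j}$. Setting $r_i \coloneqq 1 + \sum_j (n_{i,j}-1)$ and expanding an arbitrary product of $r_i$ generators, at least one exponent meets or exceeds $n_{i,j}$ by pigeonhole, hence $J_i^{r_i} \subseteq I_i$.

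Finally I take $r \coloneqq \max_{1 \le i \le N} r_i$, which exists because the cover is finite, and obtain $J_i^r \subseteq J_i^{r_i} \subseteq I_i$ on each $U_i$. Since $\MJ^r$ and $\MI$ are sheaves, the inclusions on the cover glue to $\MJ^r \subseteq \MI$ globally, i.e.\ $V \subseteq W^r$. There is no real obstacle here; the only subtlety is the bookkeeping to show that a uniform power suffices, which is taken care of by finiteness of the affine cover (noetherianness of $X$) together with finite generation of $J_i$ (noetherianness of $A_i$).
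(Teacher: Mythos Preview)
Your proof is correct and follows essentially the same approach as the paper: reduce to the affine case, use $\Supp(V)\subseteq\Supp(W)$ to get $J\subseteq\sqrt{I}$, then combine finite generation of $J$ with a pigeonhole argument to find a power of $J$ contained in $I$. The only cosmetic difference is that the paper writes ``we may assume $X$ is affine'' in one line, whereas you spell out the finite affine cover and the $\max$ over the local exponents.
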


\begin{proof}
	Since $X$ is noetherian, we may assume that 
	$X$ is affine 
	and written as $X = \Spec\,R$ with a noetherian ring $R$.
	Let $I \coloneqq H^0(X, \MI)$ and $J \coloneqq H^0(X, \MJ)$.
	It is enough to show that
	$I \supseteq J^r$ for some $r \in \Z_{>0}$.
	Since $R$ is noetherian,
	$J$ is generated by a finite number of elements $\sequence{f}{,}{l}$
	as an ideal of $R$.
	Since $\Supp(V) \subseteq \Supp(W)$,
	we have $\sqrt{I} \supseteq \sqrt{J} \supseteq J$.
	Let $r_0$ be a positive integer such that
	$f_i ^{r_0} \in I$ holds for any $i \in \{1,2,\dots,l\}$.
	Then, by the pigeonhole principle, we have 
	\[
	J^{l(r_0 -1) +1} 
	\subseteq (\sequence{f^{r_0}}{,}{l}) 
	\subseteq I.
	\]
	
\end{proof}

\begin{prop}\label{prop:K-E subseteq B_+}
	Let $f:X \to S$ be a projective morphism 
	of noetherian schemes.
	Let $L$ be an $f$-nef invertible sheaf on $X$.
	Then, we have $\E_f(L) \subseteq \B_{+,f}(L)$. 
\end{prop}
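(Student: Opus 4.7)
The plan is to show $\Supp(V) \subseteq \B_{+,f}(L)$ for every integral closed subscheme $V \subseteq X$ with $L|_V$ not $f|_V$-big, since $\E_f(L)$ is by definition the set-theoretic union of such $V$. Fix such a $V$. By Proposition~\ref{prop:BL-|_Y}~(\ref{enu:BL-|_Y-B_+}), $\B_{+,f|_V}(L|_V) \subseteq \B_{+,f}(L)$, so it suffices to show that $\B_{+,f|_V}(L|_V)$ equals $\Supp(V)$. By Proposition~\ref{BL-affineness}~(\ref{enu:BL-affineness-3}) this is local on $S$, so I may assume $S = \Spec R$ is affine. Let $A$ be an $f$-ample invertible sheaf on $X$; then $A|_V$ is $f|_V$-ample.

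Unwinding the definitions,
\[
\B_{+,f|_V}(L|_V)
= \bigcap_{m, k > 0} \Bs_{f|_V}\!\bigl(L|_V^{\otimes mk} \otimes_{\MO_V} (A|_V)^{\otimes(-k)}\bigr).
\]
Since $V$ is integral and $S$ is affine, I will use that for a line bundle $\MF$ on $V$, $\Bs_{f|_V}(\MF) = \Supp(V)$ if and only if $H^0(V, \MF) = 0$: any nonzero section of a line bundle on an integral scheme is nonzero at the generic point and hence nonvanishing there, so $\Supp(Z(s))$ is a proper closed subset of $V$ by Proposition~\ref{prop:ZL-2}. Thus the task reduces to proving
\[
H^0\bigl(V,\, L|_V^{\otimes m} \otimes_{\MO_V} (A|_V)^{\otimes(-k)}\bigr) = 0 \qquad \text{for all } m, k \ge 1.
\]

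To establish this, I will argue by contradiction. Suppose a nonzero section $s$ of $L|_V^{\otimes m} \otimes (A|_V)^{\otimes(-k)}$ exists. The key step will be to pick $N \gg 0$ so that $(A|_V)^{\otimes(kN-1)}$ is $f|_V$-free (possible by $f|_V$-ampleness of $A|_V$); since $V$ is nonempty this forces a nonzero global section $t$ of $(A|_V)^{\otimes(kN-1)}$ to exist. On the integral scheme $V$, a tensor product of nonzero sections of line bundles is nonzero (testing at the generic point, where the stalks become one-dimensional $K(V)$-vector spaces), so $s^{\otimes N} \otimes t$ is a nonzero element of $H^0(V,\, L|_V^{\otimes mN} \otimes_{\MO_V} (A|_V)^{\otimes(-1)})$. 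Since $S$ is affine, this forces $(f|_V)_*(L|_V^{\otimes mN} \otimes (A|_V)^{\otimes(-1)}) \neq 0$, so $L|_V$ is $f|_V$-weakly big and hence $f|_V$-big (by Theorem~\ref{WB-wb=b}, as $V$ is irreducible), contradicting the choice of $V$.

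The main subtlety is the bookkeeping needed to absorb the arbitrary negative power $A|_V^{\otimes(-k)}$ appearing in $\SB_{f|_V}$ into the single negative power $A|_V^{\otimes(-1)}$ that enters the definition of $f$-weak bigness; the tensoring trick above resolves this using only $f|_V$-ampleness of $A|_V$ and integrality of $V$. Note that the $f$-nef hypothesis plays no role in this direction—it is reserved for the harder reverse inclusion $\B_{+,f}(L) \subseteq \E_f(L)$ of Nakamaye type.
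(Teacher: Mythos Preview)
Your proof is correct. Both your argument and the paper's rest on the same core fact: for integral $V$, failure of $L|_V$ to be $f|_V$-big is equivalent to the vanishing of $H^0(V, L|_V^{\otimes m} \otimes (A|_V)^{-k})$ for all $m,k \ge 1$. The paper packages this as a direct contrapositive on $X$: if $\Supp(V) \nsubseteq \B_{+,f}(L)$, one picks $m,r$ and $s \in H^0(X, L^{\otimes mr} \otimes A^{-r})$ with $\Supp(V) \nsubseteq \Supp(Z(s))$, so $s|_V \neq 0$ by Propositions~\ref{prop:ZL-3} and~\ref{prop:ZL-5}, and hence $L|_V$ is $f|_V$-big (using $A^{\otimes r}$ as the ample witness). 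You instead first descend to $V$ via Proposition~\ref{prop:BL-|_Y}~(\ref{enu:BL-|_Y-B_+}) and then prove $\B_{+,f|_V}(L|_V)=\Supp(V)$ with a tensor trick that converts $A^{-k}$ into $A^{-1}$. The paper's route is shorter because the independence of bigness from the choice of ample sheaf absorbs the power of $A$ implicitly, obviating your trick; on the other hand, your formulation makes the equality $\B_{+,f|_V}(L|_V)=\Supp(V)$ explicit, which is a pleasant byproduct. Your closing remark that $f$-nefness plays no role in this inclusion is correct and agrees with the paper's proof, which likewise never invokes it.
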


\begin{proof}
	By Proposition~\ref{BL-affineness}~(\ref{enu:BL-affineness-3})
	and Proposition~\ref{prop:E-property}~(\ref{enu:E-affineness}),
	we may assume that $S$ is affine. 
	Let $A$ be an $f$-ample invertible sheaf on $X$.
	Let $V$ be an integral closed subscheme of $X$.
	Assume that $\Supp(V) \nsubseteq \B_{+,f}(L)$.
	Then, there are positive integers $m, r \in \Z_{>0}$ and 
	a section $s \in H^0(X, L^{\otimes mr}\otimes_{\MO_X} A^{-r})$
	such that $\Supp(V) \nsubseteq \Supp(Z(s))$ holds.
	By Proposition~\ref{prop:ZL-3} and Proposition~\ref{prop:ZL-5},
	we have 
	$s|_V \neq 0$.
	Thus $L|_V$ is $f|_V$-big.
	This induces that 
	if $L|_V$ is not $f|_V$-big,
	then $\Supp(V) \subseteq \B_{+,f}(L)$ holds.
	Hence we have $\E_f(L) \subseteq \B_{+,f}(L)$.
\end{proof}

\begin{thm}\label{K-main1}
	Let $f:X \to S$ be a projective morphism 
	of noetherian schemes.
	Let $L$ be an $f$-nef invertible sheaf on $X$.
	Then, we have $\B_{+,f}(L)=\E_f(L)$. 
\end{thm}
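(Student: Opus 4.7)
The plan is to verify the two inclusions separately. The inclusion $\E_f(L)\subseteq\B_{+,f}(L)$ is already Proposition~\ref{prop:K-E subseteq B_+}. For the reverse, I would proceed by noetherian induction on the topological space $\Supp(X)$, first reducing to the case $S$ affine via Proposition~\ref{BL-affineness}~(\ref{enu:BL-affineness-3}) and Proposition~\ref{prop:E-property}~(\ref{enu:E-affineness}). When $\E_f(L)=\Supp(X)$ the inclusion is immediate, so I may assume $\E_f(L)\subsetneq\Supp(X)$; then Proposition~\ref{prop:E-property}~(\ref{enu:E=X}) forces $L$ to be $f$-weakly big.

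With this setup, I would invoke Lemma~\ref{lem:K-4} to produce, for some $m\in\Z_{>0}$, a section $s\in H^0(X,L^{\otimes m}\otimes_{\MO_X}A^{-1})$ with $s|_{X_\red}\neq 0$. Setting $Z\coloneqq Z(s)$, Propositions~\ref{prop:ZL-2} and \ref{prop:ZL-3} force $\Supp(Z)\subsetneq\Supp(X)$, while the existence of $s$ gives $\B_{+,f}(L)\subseteq\Bs_f(L^{\otimes m}\otimes_{\MO_X} A^{-1})\subseteq\Supp(Z)$. Since $L|_Z$ is $f|_Z$-nef, the inductive hypothesis applied to $Z$ yields $\B_{+,f|_Z}(L|_Z)=\E_{f|_Z}(L|_Z)$, and combining with Proposition~\ref{prop:E-property}~(\ref{enu:E=E|_Z-2}) and Proposition~\ref{prop:BL-|_Y}~(\ref{enu:BL-|_Y-B_+}) gives $\E_f(L)=\E_{f|_Z}(L|_Z)=\B_{+,f|_Z}(L|_Z)\subseteq\B_{+,f}(L)$. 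The remaining step is therefore the reverse inclusion $\B_{+,f}(L)\subseteq\B_{+,f|_Z}(L|_Z)$ on $\Supp(Z)$.

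This last step is where the main obstacle lies. Concretely, for each $y\in\Supp(Z)$ with $y\notin\B_{+,f|_Z}(L|_Z)$, one must promote a section $t\in H^0(Z,L|_Z^{\otimes N}\otimes A|_Z^{-r})$ with $t(y)\neq 0$ to a section $\tilde t\in H^0(X,L^{\otimes N}\otimes A^{-r})$ with $\tilde t(y)\neq 0$; the obstruction sits in $H^1(X,\MJ_Z\otimes L^{\otimes N}\otimes A^{-r})$, where $\MJ_Z$ denotes the ideal sheaf of $Z$. In the analogous arguments over a field \cite{CMM,Bir17}, this obstruction is killed through Fujita's vanishing theorem, and the natural route here is a relative Fujita-type vanishing in the noetherian setting. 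Alternatively, Lemma~\ref{lem:K-6} enables one to replace $Z$ by a sufficiently thick neighborhood $Z^{(r)}$ containing $\overline{\{y\}}$ and lift stepwise through the filtration with graded pieces $\MJ_Z^i/\MJ_Z^{i+1}$, while the upper bounds of Lemma~\ref{lem:K-1} control the successive $H^1$-estimates. Either way, this lifting is where the real content of the theorem is concentrated.
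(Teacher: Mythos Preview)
Your overall framework matches the paper's: reduce to $S$ affine, dispose of the non--weakly-big case, produce $s$ via Lemma~\ref{lem:K-4}, pass to a zero locus $Z$, and finish by noetherian induction together with Proposition~\ref{prop:E-property}~(\ref{enu:E=E|_Z-2}). You have also correctly isolated the crux: one must show $\B_{+,f}(L)\subseteq\B_{+,f|_Z}(L|_Z)$, i.e.\ lift a section on $Z$ that is nonzero at $y$ to a section on $X$.

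The gap is that neither of your two proposed mechanisms for this lifting works as written. For option~(a), a relative Fujita vanishing in the noetherian setting does exist and is exactly what the paper uses (\cite[Theorem~1.5]{Kee:ample-filter}), but it applies to sheaves of the form $A^{\otimes n}\otimes N$ with $n\gg 0$ and $N$ $f$-nef. It does \emph{not} apply to $\MJ_Z\otimes L^{\otimes N}\otimes A^{-r}$ on $X$, because $\MJ_Z$ is not invertible in general: Lemma~\ref{lem:K-4} only guarantees $s|_{X_\red}\neq 0$, so $s$ may vanish at embedded associated points of $X$ or at generic points of other irreducible components, and then $Z(s)$ (or any power $Z(s^{\otimes r})$) fails to be a Cartier divisor on $X$. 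For option~(b), Lemma~\ref{lem:K-1} provides only upper bounds on generic lengths of $H^1$, not vanishing; a growth estimate cannot kill the obstruction to lifting the \emph{particular} section that is nonzero at $y$.

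The device the paper supplies, and which you are missing, is an irredundant decomposition $X=X'\cup X''$ (Definition--Proposition~\ref{Dec-scheme}): $X'$ collects those primary components $X_j$ with $\Supp(X_j)\nsubseteq\Supp(Z(s))$, and $X''$ the rest. On $X'$ the restricted section $s|_{X'}$ is nonzero at every associated point, so $Z'\coloneqq Z(s^{\otimes r}|_{X'})$ \emph{is} an effective Cartier divisor and its ideal sheaf is $(L|_{X'})^{-mr}\otimes(A|_{X'})^{\otimes r}$; now Fujita vanishing on $X'$ applies for $r\gg 0$ and lifts the section from $Z'$ to $X'$. Meanwhile Lemma~\ref{lem:K-6} lets one choose $r$ so large that $X''\subseteq Z\coloneqq Z(s^{\otimes r})$ scheme-theoretically, so the given section on $Z$ already restricts to $X''$. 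A Mayer--Vietoris argument along $T=X'\cap X''\subseteq Z'$ then glues the two pieces to a section on $X$ with the required nonvanishing at $y$. This decomposition-plus-gluing step is the content your proposal is lacking.
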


\begin{proof}
	By Proposition~\ref{BL-affineness}~(\ref{enu:BL-affineness-3}) and
	Proposition~\ref{prop:E-property}~(\ref{enu:E-affineness}), 
	we may assume that $S$ is affine.
	Let $A$ be an $f$-very ample invertible sheaf on $X$.
	By Proposition~\ref{prop:K-E subseteq B_+}, 
	$\E_f(L) \subseteq \B_{+,f}(L)$ holds.
	Assume that $L$ is not $f$-weakly big.
	Then we have $\Supp(X) = \E_f(L)$
	by Proposition~\ref{prop:E-property}~(\ref{enu:E=X}).
	Hence we have
	\[
	\Supp(X) = \E_f(L) \subseteq \B_{+,f}(L) \subseteq \Supp(X),
	\]
	which implies
	$\E_f(L) = \B_{+,f}(L)$.
	Thus, we may assume that $L$ is $f$-weakly big.
	Then, by Lemma~\ref{lem:K-4},
	there are $m \in \Z_{>0}$ 
	and $s \in H^0(X,L^{\otimes m}\otimes A^{-1})$
	such that $s|_{X_\red} \neq 0$ holds.
	Let $X \coloneqq \bigcup_{j=1}^n X_j$ be an irredundant decomposition of $X$
	(cf. Definition-Proposition~\ref{Dec-scheme}).
	Let 
	\[
	J' \coloneqq \{ j \in \{1,2,\dots,n\} \mid \Supp(X_j) \nsubseteq \Supp(Z(s))\},
	\]
	and let
	\[
	J'' \coloneqq \{ j \in \{1,2,\dots,n\} \mid \Supp(X_j) \subseteq \Supp(Z(s))\}.
	\]
	Let 
	$X' \coloneqq \bigcup_{j \in J'} X_j$, and 
	let $X'' \coloneqq \bigcup_{j \in J''} X_j$.
	By Lemma~\ref{lem:K-6} and Proposition~\ref{prop:ZL-4}, 
	$X'' \subseteq Z(s^{\otimes r})$ holds
	for sufficiently large $r \in \Z_{>0}$.
	By Fujita's vanishing theorem (\cite[Theorem~1.5]{Kee:ample-filter}), 
	there is a positive integer $r_0 \in \Z_{>0}$
	such that
	for any integer $r \ge r_0$ and 
	for any $f$-nef invertible sheaf $L'$ on $X'$,
	\[
	H^1(X',(A|_{X'})^{\otimes r-1} \otimes_{\MO_{X'}} L') = 0
	\] 
	holds.
	Pick $r \in \Z_{>0}$ such that 
	these two conditions hold.
	Let $Z \coloneqq Z(s^{\otimes r})$.
	Let $Z' \coloneqq Z(s^{\otimes r}|_{X'}) = Z \cap X'$,
	where the second equality follows from Proposition~\ref{prop:ZL-5}. 
	Let $\MI_{Z'}$ be the ideal sheaf of $Z'$ on $X'$.
	By Definition-Proposition~\ref{prop:ZL-effective divisor}
	and the definition of $X'$, 
	$Z(s|_{X'})$ defines an effective Cartier divisor on $X'$. 
	Since 
	$\Supp(Z') = \Supp(Z(s|_{X'}))$,
	also $Z'$ defines an effective Cartier divisor on $X'$.
	Thus 
	we have 
	$\MI_{Z'} \simeq 
	(L|_{X'})^{-mr} \otimes_{\MO_{X'}} (A|_{X'})^{\otimes r}$
	by Definition-Proposition~\ref{prop:ZL-effective divisor}.
	We now show the following claim.
	
	\begin{claim}\label{claim:K-1}
		$\B_{+,f|_Z}(L|_Z) = \B_{+,f}(L)$ holds.
	\end{claim}
	
	\begin{proof}[Proof of Claim~\ref{claim:K-1}]
	By Proposition~\ref{prop:BL-|_Y}~(\ref{enu:BL-|_Y-B_+})
	and the definition of $\B_{+,f}(L)$,
	we have 
	\[
	\B_{+,f|_Z}(L|_Z)
	\subseteq \B_{+,f}(L)
	\subseteq \Supp(Z).
	\]
	Let $x \in \Supp(Z) \setminus \B_{+,f|_Z}(L|_Z)$.
	It is enough to show that 
	$x \notin \B_{+,f}(L)$.
	Let $m'$ and $ r'$ be positive integers
	such that 
	\[
	\B_{+,f|_Z}(L|_Z) = \Bs_{f|_Z}((L^{\otimes m'r'} \otimes_{\MO_X} A^{-r'})|_Z)
	\] 
	and
	$m'r' \ge mr$ hold.
	Then,
	there is a section $t_1 \in H^0(Z, (L^{\otimes m'r'}\otimes_{\MO_X} A^{-r'})|_Z )$
	such that $t_1(x) \neq 0$.
	Since $A$ is $f$-very ample,
	there is a section $t_2 \in H^0(Z, A^{r'-1})$
	such that $t_2(x) \neq 0$.
	Let $M \coloneqq L^{\otimes m'r'}\otimes_{\MO_X} A^{-1}$ and
	let $t \coloneqq t_1 \otimes t_2 \in H^0(Z,M|_Z)$.
	Then $t(x) \neq 0$ holds.
	The exact sequence 
	\[
	0
	\to \MI_{Z'}
	\to \MO_{X'}
	\to \MO_{Z'}
	\to 0
	\]
	induces the exact sequence
	\[
	H^0(X', M|_{X'})
	\to H^0(Z', M|_{Z'})
	\to H^1(X', \MI_{Z'} \otimes_{\MO_{X'}} M|_{X'}).
	\]
	Since 
	\[
	\MI_{Z'} \otimes_{\MO_{X'}} M|_{X'}
	= (L|_{X'})^{\otimes m'r'-mr} \otimes_{\MO_{X'}} (A|_{X'})^{\otimes r-1}
	\]
	and $(L|_{X'})^{\otimes m'r'-mr}$ is $f|_{X'}$-nef,
	$H^1(X', \MI_{Z'} \otimes_{\MO_{X'}} M|_{X'}) = 0$ holds
	by the definition of $r$.
	Thus, there is a section $t' \in H^0(X', M|_{X'})$
	such that $t'|_{Z'} = t|_{Z'}$.
	By the Mayer--Vietoris sequence,
	we have the exact sequence
	\[
	0
	\to H^0(X,M)
	\stackrel{\phi}{\to} H^0(X',M|_{X'}) \oplus H^0(X'',M|_{X''})
	\stackrel{\psi}{\to} H^0(T, M|_T),
	\]
	where $T \coloneqq X' \cap X''$.
	Note that
	$T
	\subseteq Z'
	\subseteq X'$
	and
	$T 
	\subseteq X'' 
	\subseteq Z$
	since $T = X' \cap X''$, $Z' = X' \cap Z$, and $X'' \subseteq Z$.
	We have 
	\[
	t'|_T
	= (t'|_{Z'})|_T
	= (t|_{Z'})|_T
	= t|_T
	= (t|_{X''})|_T,
	\]
	i.e., 
	$\psi(t', t|_{X''}) = 0$.
	Thus,
	there is a section $t'' \in H^0(X, M)$
	such that $\phi(t'') = (t', t|_{X''})$.
	%
%
%
%
%
	Note that
	$t''|_{Z'} = t'|_{Z'} = t|_{Z'}$
	and
	$t''|_{X''} = t|_{X''}$.
	Since $X = X' \cup X''$,
	we have $\Supp(X) = \Supp(X') \cup \Supp(X'')$.
	If $x \in \Supp(X')$,
	we have 
	\[
	t''(x) = (t''|_{Z'})(x) = (t|_{Z'})(x) = t(x) \neq 0,
	\]
	since $x \in \Supp(X') \cap \Supp(Z) = \Supp(Z')$.
	On the other hand,
	if $x \in \Supp(X'')$,
	we have
	\[
	t''(x) =(t''|_{X''})(x) = (t|_{X''})(x) = t(x) \neq 0.
	\]
	In any case, we have $t''(x) \neq 0$,
	which implies
	$x \notin \Bs_f(L^{\otimes m'r'}\otimes_{\MO_X} A^{-1}) 
	\supseteq \B_{+,f}(L)$.
	Hence the claim holds.
	\end{proof}
	
	Since $s|_{X_\red} \neq 0$,
	$\Supp(Z) \subsetneq \Supp(X)$ holds by Proposition~\ref{prop:ZL-3}.
	By noetherian induction,
	it holds that
	$\E_{f |_Z}(L |_Z) = \B_{+,f |_Z}(L |_Z)$.
	By Claim~\ref{claim:K-1},
	we have 
	$\B_{+,f|_Z}(L|_Z) = \B_{+,f}(L)$.
	By Proposition~\ref{prop:E-property}~(\ref{enu:E=E|_Z-2}), 
	we have 
	$\E_f(L) = \E_{f |_Z}(L |_Z)$.
	Thus, we have 
	$\B_{+,f}(L) = \E_f(L)$.
\end{proof}

\begin{thm}\label{K-main2}
	Let $f:X \to S$ be a projective morphism 
	of noetherian schemes 
	and let $L$ be an $f$-nef invertible sheaf on $X$.
	Then, there exists a closed subscheme $F$ of $X$
	such that
	$\Supp(F) = \E_f(L)$
	and 
	$\SB_f(L) = \SB_{f|_F}(L|_F)$ hold.
\end{thm}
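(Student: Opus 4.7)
The plan is to perform noetherian induction on the underlying topological space of $X$, closely paralleling the proof of Theorem~\ref{K-main1}. First, by Proposition~\ref{BL-affineness} and Proposition~\ref{prop:E-property}~(\ref{enu:E-affineness}), I may assume $S$ is affine, and I fix an $f$-very ample invertible sheaf $A$ on $X$. If $L$ is not $f$-weakly big, then $\E_f(L) = \Supp(X)$ by Proposition~\ref{prop:E-property}~(\ref{enu:E=X}) and the choice $F \coloneqq X$ works; so I assume $L$ is $f$-weakly big.

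Next, Lemma~\ref{lem:K-4} supplies $m \in \Z_{>0}$ and $s \in H^0(X, L^{\otimes m} \otimes_{\MO_X} A^{-1})$ with $s|_{X_\red} \neq 0$, and I take $r \in \Z_{>0}$ exactly as in the proof of Theorem~\ref{K-main1}: large enough that $X'' \subseteq Z(s^{\otimes r})$ (in the notation of that proof) and that Fujita vanishing provides $H^1(X', (A|_{X'})^{\otimes r-1} \otimes_{\MO_{X'}} L') = 0$ for every nef invertible sheaf $L'$ on $X'$. Setting $Z \coloneqq Z(s^{\otimes r})$, Propositions~\ref{prop:ZL-2} and~\ref{prop:ZL-3} give $\Supp(Z) \subsetneq \Supp(X)$, so noetherian induction applied to the projective morphism $f|_Z : Z \to S$ with the $f|_Z$-nef sheaf $L|_Z$ produces a closed subscheme $F \subseteq Z \subseteq X$ satisfying $\Supp(F) = \E_{f|_Z}(L|_Z)$ and $\SB_{f|_F}(L|_F) = \SB_{f|_Z}(L|_Z)$. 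Proposition~\ref{prop:E-property}~(\ref{enu:E=E|_Z-2}) gives $\E_{f|_Z}(L|_Z) = \E_f(L)$, so the support of $F$ is as required. What remains is the $\SB$-analogue of Claim~\ref{claim:K-1}: the equality $\SB_f(L) = \SB_{f|_Z}(L|_Z)$.

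The inclusion $\SB_{f|_Z}(L|_Z) \subseteq \SB_f(L)$ is Proposition~\ref{prop:BL-|_Y}~(\ref{enu:BL-|_Y-SB}). For the reverse, I first observe that Theorem~\ref{K-main1} yields $\SB_f(L) \subseteq \B_{+,f}(L) = \E_f(L) \subseteq \Supp(Z)$, so it suffices to take any $x \in \Supp(Z) \setminus \SB_{f|_Z}(L|_Z)$ and show $x \notin \SB_f(L)$. Picking a sufficiently divisible $N \ge mr$ with $\SB_{f|_Z}(L|_Z) = \Bs_{f|_Z}((L^{\otimes N})|_Z)$, then $t_1 \in H^0(Z, (L^{\otimes N})|_Z)$ with $t_1(x) \neq 0$, and setting $Z' \coloneqq Z \cap X'$, the isomorphism $\MI_{Z'} \simeq (L|_{X'})^{-mr} \otimes_{\MO_{X'}} (A|_{X'})^{\otimes r}$ furnished by Definition-Proposition~\ref{prop:ZL-effective divisor} gives
\[
\MI_{Z'} \otimes_{\MO_{X'}} L^{\otimes N}|_{X'} \;\simeq\; (A|_{X'})^{\otimes r-1} \otimes_{\MO_{X'}} \bigl((L|_{X'})^{\otimes N-mr} \otimes_{\MO_{X'}} A|_{X'}\bigr),
\]
the twist of a nef invertible sheaf by $(A|_{X'})^{\otimes r-1}$. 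Fujita vanishing (built into the choice of $r$) then yields $H^1(X', \MI_{Z'} \otimes_{\MO_{X'}} L^{\otimes N}|_{X'}) = 0$, so $t_1|_{Z'}$ lifts to some $t' \in H^0(X', L^{\otimes N}|_{X'})$. Because $T \coloneqq X' \cap X''$ satisfies $T \subseteq Z'$ and $T \subseteq X'' \subseteq Z$, the pair $(t', t_1|_{X''})$ agrees on $T$; Mayer--Vietoris glues them to a section $u \in H^0(X, L^{\otimes N})$, and the same case-check on $\Supp(X) = \Supp(X') \cup \Supp(X'')$ as in Claim~\ref{claim:K-1} gives $u(x) \neq 0$, hence $x \notin \SB_f(L)$.

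The hardest step will be this last one: adapting the $\B_+$-argument of Claim~\ref{claim:K-1}, where the section twist carries an extra $A^{-1}$, to the $\SB$-setting where it does not. The argument nevertheless goes through because the $r$ copies of $A|_{X'}$ built into $\MI_{Z'}$ supply both the $A^{r-1}$ required by Fujita vanishing and the single $A|_{X'}$ that keeps the companion factor nef; no ingredients beyond those already developed for Theorem~\ref{K-main1} are required.
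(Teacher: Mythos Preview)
Your proposal is correct and follows essentially the same approach as the paper's proof, including the construction of $Z$, the Fujita-vanishing lift over $X'$, the Mayer--Vietoris gluing, and the noetherian induction. The only cosmetic difference is that you invoke Theorem~\ref{K-main1} to obtain $\SB_f(L) \subseteq \Supp(Z)$, whereas the paper uses the direct inclusion $\B_{+,f}(L) \subseteq \Supp(Z)$ from the definition of the augmented base locus; either works.
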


The proof is almost the same as in Theorem~\ref{K-main1}.

\begin{proof}
	By Proposition~\ref{BL-affineness}~(\ref{enu:BL-affineness-3}) and
	Proposition~\ref{prop:E-property}~(\ref{enu:E-affineness}), 
	We may assume that $S$ is affine.
	Let $A$ be an $f$-very ample invertible sheaf on $X$.
	If $L$ is not $f$-weakly big,
	we have $\E_f(L) = \Supp(X)$
	by Proposition~\ref{prop:E-property}~(\ref{enu:E=X}),
	and hence the assertion holds 
	for $F = X$.
	Thus, we may assume that $L$ is $f$-weakly big.
	%
	%
	Then, by Lemma~\ref{lem:K-4},
	there are $m \in \Z_{>0}$ and $s \in H^0(X,L^{\otimes m}\otimes A^{-1})$
	such that $s|_{X_\red} \neq 0$ holds.
	Let $X = \bigcup_{j=1}^n X_j$ be an irredundant decomposition of $X$
	(cf. Definition-Proposition~\ref{Dec-scheme}).
	Let 
	\[
	J' \coloneqq \{ j \in \{1,2,\dots,n\} \mid \Supp(X_j) \nsubseteq \Supp(Z(s))\},
	\]
	and let
	\[
	J'' \coloneqq \{ j \in \{1,2,\dots,n\} \mid \Supp(X_j) \subseteq \Supp(Z(s))\}.
	\]
	Let 
	$X' \coloneqq \bigcup_{j \in J'} X_j$, and 
	let $X'' \coloneqq \bigcup_{j \in J''} X_j$.
	By Lemma~\ref{lem:K-6} and Proposition~\ref{prop:ZL-4}, 
	$X'' \subseteq Z(s^{\otimes r})$ holds
	for sufficiently large $r \in \Z_{>0}$.
	By Fujita's vanishing theorem (cf. \cite[Theorem~1.5]{Kee:ample-filter}), 
	there is a positive integer $r_0 \in \Z_{>0}$
	such that
	for any integer $r \ge r_0$ and 
	for any $f$-nef invertible sheaf $L'$ on $X'$,
	$H^1(X',(A|_{X'})^{\otimes r-1} \otimes_{\MO_{X'}} L') = 0$ holds.
	Pick $r \in \Z_{>0}$ such that 
	these two conditions hold.
	Let $Z \coloneqq Z(s^{\otimes r})$.
	Let $Z' \coloneqq Z(s^{\otimes r}|_{X'}) = Z \cap X'$,
	where the second equality follows from Proposition~\ref{prop:ZL-5}. 
	Let $\MI_{Z'}$ be the ideal sheaf of $Z'$ on $X'$.
	By Definition-Proposition~\ref{prop:ZL-effective divisor}, 
	$Z(s|_{X'})$ defines an effective Cartier divisor on $X'$. 
	Since 
	$\Supp(Z') = \Supp(Z(s|_{X'}))$,
	also $Z'$ defines an effective Cartier divisor on $X'$.
	By Definition-Proposition~\ref{prop:ZL-effective divisor}, 
	we have 
	$\MI_{Z'} \simeq 
	(L|_{X'})^{-mr} \otimes_{\MO_{X'}} (A|_{X'})^{\otimes r}$.
	We now show the following claim.
	
	\begin{claim}\label{claim:K-2}
		$\SB_{f|_Z}(L|_Z) = \SB_f(L)$ holds.
	\end{claim}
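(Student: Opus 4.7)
The plan is to mirror the proof of Claim~\ref{claim:K-1} verbatim, with the twist $M = L^{\otimes m'r'} \otimes A^{-1}$ replaced throughout by $M = L^{\otimes m'}$. First I establish the containment
\[
\SB_{f|_Z}(L|_Z) \subseteq \SB_f(L) \subseteq \Supp(Z).
\]
The left inclusion is Proposition~\ref{prop:BL-|_Y}~(\ref{enu:BL-|_Y-SB}). For the right inclusion, I multiply $s \in H^0(X, L^{\otimes m} \otimes A^{-1})$ by a section of $A$ separating a given $x$ (which exists because $A$ is $f$-very ample and $S$ is affine), yielding a section of $L^{\otimes m}$ that does not vanish at any $x \notin \Supp(Z) = \Supp(Z(s))$, hence $x \notin \Bs_f(L^{\otimes m}) \supseteq \SB_f(L)$.

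For the reverse inclusion, I take $x \in \Supp(Z) \setminus \SB_{f|_Z}(L|_Z)$, pick $m' \in \Z_{>0}$ sufficiently divisible so that $\SB_{f|_Z}(L|_Z) = \Bs_{f|_Z}((L|_Z)^{\otimes m'})$ and moreover $m' \ge mr$, and select $t_1 \in H^0(Z, (L|_Z)^{\otimes m'})$ with $t_1(x) \neq 0$ via Proposition~\ref{BL-Z(s)}. Then I lift $t_1|_{Z'}$ to a section $t' \in H^0(X', L^{\otimes m'}|_{X'})$ using the cohomology of $0 \to \MI_{Z'} \to \MO_{X'} \to \MO_{Z'} \to 0$, and glue $t'$ with $t_1|_{X''}$ into $t'' \in H^0(X, L^{\otimes m'})$ using the Mayer--Vietoris sequence on $X = X' \cup X''$, the compatibility on $T = X' \cap X''$ following from $T \subseteq Z' \subseteq X'$ and $T \subseteq X''$. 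The case split on $x \in \Supp(X')$ versus $x \in \Supp(X'')$, exactly as in Claim~\ref{claim:K-1}, then forces $t''(x) \neq 0$, placing $x \notin \Bs_f(L^{\otimes m'}) \supseteq \SB_f(L)$.

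The step requiring care is the lifting, which depends on the vanishing
\[
H^1(X', (L|_{X'})^{\otimes m' - mr} \otimes (A|_{X'})^{\otimes r}) = 0.
\]
Rewriting the left-hand side as $H^1(X', (A|_{X'})^{\otimes r - 1} \otimes L')$ with $L' = (L|_{X'})^{\otimes m' - mr} \otimes A|_{X'}$, I observe that $L'$ is $f|_{X'}$-nef (because $m' \ge mr$, $L$ is $f$-nef, and $A$ is $f$-ample), so the Fujita vanishing already arranged in the choice of $r$ applies. This is the main technical point and is handled by the very same $r$ chosen in Claim~\ref{claim:K-1}; no new difficulty arises, and the rest of the argument is a direct transcription.
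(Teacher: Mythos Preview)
Your proposal is correct and follows essentially the same approach as the paper: the paper likewise sets $M = L^{\otimes m'}$ with $m' \ge mr$, lifts $t|_{Z'}$ to $X'$ via the same Fujita vanishing (implicitly using your rewriting $(A|_{X'})^{\otimes r} = (A|_{X'})^{\otimes r-1} \otimes A|_{X'}$), and glues via Mayer--Vietoris with the identical case split. The only cosmetic difference is that the paper establishes $\SB_f(L) \subseteq \Supp(Z)$ by passing through $\B_{+,f}(L)$ rather than by your direct multiplication of $s$ by a section of $A$, but this is an inessential variation.
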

	
	\begin{proof}[Proof of Claim~\ref{claim:K-2}]
	We have 
	\[
	\SB_{f|_Z}(L|_Z)
	\subseteq \SB_{f}(L)
	\subseteq \B_{+,f}(L)
	\subseteq \Supp(Z),
	\]
	where the first, second, and last inclusions respectively
	follow from Proposition~\ref{prop:BL-|_Y}~(\ref{enu:BL-|_Y-SB}), the definition of $\SB_f(L)$, and the definition of $\B_{+,f}(L)$.
	%
	Let $x \in \Supp(Z) \setminus \SB_{f|_Z}(L|_Z)$.
	It is enough to show that 
	$x \notin \SB_{f}(L)$.
	Let $m'$ be a positive integer
	such that 
	$
	\SB_{f|_Z}(L|_Z) = \Bs_{f|_Z}((L|_Z)^{\otimes m'} )
	$ 
	and
	$m' \ge mr$ hold.
	Then,
	there is a section $t \in H^0(Z, (L|_Z)^{\otimes m'} )$
	such that $t(x) \neq 0$.
	Let $M = L^{\otimes m'}$.
	The exact sequence 
	\[
	0
	\to \MI_{Z'}
	\to \MO_{X'}
	\to \MO_{Z'}
	\to 0
	\]
	induces the exact sequence
	\[
	H^0(X', M|_{X'})
	\to H^0(Z', M|_{Z'})
	\to H^1(X', \MI_{Z'} \otimes_{\MO_{X'}} M|_{X'}).
	\]
	Since 
	\[
	\MI_{Z'} \otimes_{\MO_{X'}} M|_{X'}
	= (L|_{X'})^{\otimes m'-mr} \otimes_{\MO_{X'}} (A|_{X'})^{\otimes r}
	\]
	and $(L|_{X'})^{\otimes m'-mr}$ is $f|_{X'}$-nef,
	$H^1(X', \MI_{Z'} \otimes_{\MO_{X'}} M|_{X'}) = 0$ holds
	by the definition of $r$.
	Thus, there is a section $t' \in H^0(X', M|_{X'})$
	such that $t'|_{Z'} = t|_{Z'}$.
	By the Mayer--Vietoris sequence,
	we have the exact sequence
	\[
	0
	\to H^0(X,M)
	\stackrel{\phi}{\to} H^0(X',M|_{X'}) \oplus H^0(X'',M|_{X''})
	\stackrel{\psi}{\to} H^0(T, M|_T),
	\]
	where $T \coloneqq X' \cap X''$.
	Note that
	$T
	\subseteq Z'
	\subseteq X'$
	and
	$T \subseteq X'' \subseteq Z$
	since $T = X' \cap X''$, $Z' = X' \cap Z$, and $X'' \subseteq Z$.
	We have 
	\[
	t'|_T
	= (t'|_{Z'})|_T
	= (t|_{Z'})|_T
	= t|_T
	= (t|_{X''})|_T,
	\]
	i.e., 
	$\psi(t', t|_{X''}) = 0$.
	Thus,
	there is a section $t'' \in H^0(X, M)$
	such that $\phi(t'') = (t', t|_{X''})$.
	Note that
	$t''|_{Z'} = t'|_{Z'} = t|_{Z'}$
	and
	$t''|_{X''} = t|_{X''}$.
	Since $X = X' \cup X''$,
	we have $\Supp(X) = \Supp(X') \cup \Supp(X'')$.
	If $x \in \Supp(X')$,
	we have 
	\[
	t''(x) = (t''|_{Z'})(x) = (t|_{Z'})(x) = t(x) \neq 0,
	\]
	since $x \in \Supp(X') \cap \Supp(Z) = \Supp(Z')$.
	On the other hand,
	if $x \in \Supp(X'')$,
	we have
	\[
	t''(x) =(t''|_{X''})(x) = (t|_{X''})(x) = t(x) \neq 0.
	\]
	In any case, we have $t''(x) \neq 0$,
	which implies 
	$x \notin \Bs_f(L^{\otimes m'}) \supseteq \SB_f(L)$.
	Hence the claim holds.
	\end{proof}
	
	Since $s|_{X_\red} \neq 0$,
	$\Supp(Z) \subsetneq \Supp(X)$ holds by Proposition~\ref{prop:ZL-3}.
	By noetherian induction,
	there is a closed subscheme $F'$ of $Z$ 
	such that $\Supp(F') = \E_{f |_Z}(L |_Z)$
	and $\SB_{f|_Z}(L|_Z) = \SB_{f|_{F'}}(L|_{F'})$ hold.
	Then we have
	$
	\E_f(L) = \E_{f |_Z}(L |_Z) = \Supp(F')
	$,
	where the first equality follows from Proposition~\ref{prop:E-property}~(\ref{enu:E=E|_Z-2}).
	We also have
	$
	\SB_f(L) = \SB_{f|_Z}(L|_Z) = \SB_{f|_{F'}}(L|_{F'})
	$,
	where the first equality follows from Claim~\ref{claim:K-2}.
	Hence the assertion holds.
\end{proof}

%
%

\section{Appendix: Kodaira's lemma} \label{sec:App}


In this section, 
we generalize Lemma~\ref{E-main}
(see Theorem~\ref{thm:App-main}).
This implies the relative version of Kodaira's lemma for 
reduced noetherian schemes (see Corollary~\ref{App-Kodaira's lemma}).

\begin{lem}\label{lem:App-1}
	Let $f:A \to B$ be an injective ring homomorphism 
	of noetherian integral domains, and 
	let $M$ be a finitely generated $A$-module.
	Then, we have 
	$\glen_A\,M = \glen_B (M \otimes_A B)$.
\end{lem}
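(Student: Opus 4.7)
The plan is to reduce both sides to dimensions of vector spaces over fraction fields and then invoke invariance of vector-space dimension under field extension.

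First I would apply Remark~\ref{rem:GR-1}~(4) to both sides. Since $A$ is an integral domain, $\glen_A\,M = \dim_{K(A)}(M \otimes_A K(A))$. Since $B$ is an integral domain and $M \otimes_A B$ is a finitely generated $B$-module, also $\glen_B(M \otimes_A B) = \dim_{K(B)}((M \otimes_A B) \otimes_B K(B))$. By associativity of the tensor product, the latter equals $\dim_{K(B)}(M \otimes_A K(B))$.

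Next I would use injectivity of $f:A \to B$ to note that every nonzero element of $A$ maps to a nonzero element of $B$, hence to a unit of $K(B)$. Therefore the composition $A \to B \to K(B)$ extends uniquely to a ring homomorphism $K(A) \to K(B)$, which is automatically injective (as $K(A)$ is a field). This gives $K(B)$ the structure of a $K(A)$-algebra, so I can write
\[
M \otimes_A K(B) \cong (M \otimes_A K(A)) \otimes_{K(A)} K(B)
\]
as $K(B)$-modules.

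Finally, since extension of scalars between fields preserves vector-space dimension, $\dim_{K(B)}(V \otimes_{K(A)} K(B)) = \dim_{K(A)} V$ for any $K(A)$-vector space $V$; applying this to $V = M \otimes_A K(A)$ gives the desired equality
\[
\glen_B(M \otimes_A B) = \dim_{K(A)}(M \otimes_A K(A)) = \glen_A\,M.
\]
There is no real obstacle here: the argument is entirely formal, and the only point requiring any care is verifying that the induced map on fraction fields exists and is well-defined, which follows immediately from injectivity of $f$ together with the fact that $A$ and $B$ are integral domains.
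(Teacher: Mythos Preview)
Your proposal is correct and follows essentially the same approach as the paper: both reduce $\glen$ to vector-space dimension via Remark~\ref{rem:GR-1}, use associativity to rewrite $M\otimes_A K(B)\cong (M\otimes_A K(A))\otimes_{K(A)} K(B)$, and conclude by invariance of dimension under the field extension $K(A)\hookrightarrow K(B)$. You spell out the justification for the field extension a bit more explicitly than the paper does, but the arguments are otherwise identical.
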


\begin{proof}
	The assertion follows from the following equality:
	\begin{align*}
	\glen_A\,M 
	&= \dim_{K(A)} (M \otimes_A K(A)) \\
	&= \dim_{K(B)} (M \otimes_A K(A) \otimes_{K(A)} K(B)) \\
	&= \dim_{K(B)} (M \otimes_A K(B)) \\	
	&= \dim_{K(B)} (M \otimes_A B \otimes_B K(B)) \\
	&= \glen_B (M \otimes_A B),
	\end{align*}
	where the first and last equalities follows from Remark~\ref{rem:GR-1}.
\end{proof}

\begin{prop}\label{GR-4}
Let $A$ be an integral domain and 
let $M$ be a finitely generated $A$-module.
Let $r \in \Z_{\ge 0}$.
Then, the following conditions are equivalent:
\begin{enumerate}
\item 
	$r \le \glen_A\, M$;
\item
	There is an injective $A$-homomorphism
	$A^{\oplus r} \to M$.
\end{enumerate}
\end{prop}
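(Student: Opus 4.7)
The plan is to use the characterization $\glen_A M = \dim_{K(A)}(M \otimes_A K(A))$ from Remark~\ref{rem:GR-1}, which turns the statement into a question about $K(A)$-dimensions of a vector space. The two directions then correspond, respectively, to the exactness of localization at the zero ideal and to the existence of lifts of linearly independent vectors in $M \otimes_A K(A)$ back to $M$.

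For the direction (2)$\Rightarrow$(1), I would take an injective $A$-homomorphism $\phi: A^{\oplus r} \to M$ and tensor with $K(A)$ over $A$. Since $K(A)$ is a localization of $A$, tensoring preserves injectivity, and we obtain an injective $K(A)$-linear map $K(A)^{\oplus r} \to M \otimes_A K(A)$. Comparing $K(A)$-dimensions yields $r \le \dim_{K(A)}(M \otimes_A K(A)) = \glen_A M$.

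For the direction (1)$\Rightarrow$(2), assume $\glen_A M \ge r$, so $\dim_{K(A)}(M \otimes_A K(A)) \ge r$. Any element of $M \otimes_A K(A)$ has the form $m \otimes (1/s)$ with $m \in M$ and $s \in A \setminus \{0\}$, and $m \otimes (1/s) = s^{-1} \cdot (m \otimes 1)$, so the image of the natural map $M \to M \otimes_A K(A)$ spans $M \otimes_A K(A)$ as a $K(A)$-vector space. Hence I can choose $m_1, \dots, m_r \in M$ whose images in $M \otimes_A K(A)$ are $K(A)$-linearly independent. Define $\phi: A^{\oplus r} \to M$ by $(a_1, \dots, a_r) \mapsto \sum a_i m_i$. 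If $(a_1, \dots, a_r) \in \Ker\,\phi$, then $\sum a_i(m_i \otimes 1) = 0$ in $M \otimes_A K(A)$, and by linear independence $a_i = 0$ in $K(A)$. Since $A \hookrightarrow K(A)$ is injective, $a_i = 0$ in $A$, so $\phi$ is injective.

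The proof is essentially routine once the identification $\glen_A M = \dim_{K(A)}(M \otimes_A K(A))$ is in hand; the only slightly delicate point is the lift in (1)$\Rightarrow$(2), which is handled by the observation that clearing denominators in $M \otimes_A K(A)$ keeps one inside the image of $M$ and only rescales by an invertible element of $K(A)$, preserving $K(A)$-linear independence. I do not anticipate a genuine obstacle.
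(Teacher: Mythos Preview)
Your proof is correct and follows essentially the same route as the paper's: both directions use the identification $\glen_A M = \dim_{K(A)}(M\otimes_A K(A))$ from Remark~\ref{rem:GR-1}, with (2)$\Rightarrow$(1) obtained by tensoring with the flat module $K(A)$ and (1)$\Rightarrow$(2) by lifting $K(A)$-linearly independent elements of $M\otimes_A K(A)$ back to $M$. Your version is slightly more explicit in justifying why such lifts exist (the clearing-denominators remark), but the argument is the same.
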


\begin{proof} 
	Let $K \coloneqq K(A)$.
	First, we show that (1) implies (2).
	Since $\glen_A\, M = \dim_K (M \otimes_A K)$
	by Remark~\ref{rem:GR-1},
	there are elements $x_1, x_2, \dots , x_r \in M$
	such that 
	$x_1 \otimes_A 1, x_2 \otimes_A 1, \dots , x_r \otimes_A 1 \in M \otimes_A K$
	are linearly independent over $K$.
	Let $f:A^{\oplus r} \to M$ be an $A$-homomorphism 
	such that $f(e_j) = x_j$ holds,
	where $(e_j)_{j=1,2,\dots,r}$ is a free basis of $A^{\oplus r}$. 
	Let $a = \sum_{j=1}^r a_j e_j \in A^{\oplus r}$
	with $a_j \in A$.
	If $f(a) =0$ holds,
	then we have 
	$\sum_{j=1}^r a_j ( x_j \otimes_A 1)=0$.
	Thus, $a_j=0$ holds for every $j \in \{1,2,\dots,r\}$.
	Hence $f$ is injective.
	
	Next, we show that (2) implies (1).
	By tensoring with $K$, 
	the injective $A$-homomorphism $A^{\oplus r} \to M$ induces
	the injective $K$-homomorphism $K^{\oplus r} \to M \otimes_A K$.
	Thus, we have $r \le \dim_K (M \otimes_A K) = \glen_A\, M $
	by Remark~\ref{rem:GR-1}.
\end{proof}

\begin{lem}\label{lem:App-2}
	Let $A$ be a finitely generated $\Z$-algebra, 
	let $\Mm$ be a maximal ideal of $A$, and
	let $e \in \Z_{>0}$.
	Then, $A/\Mm ^e$ is a finite set.
	In particular,
	$A/\Mm$ is of positive characteristic.
\end{lem}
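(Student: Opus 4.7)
The plan is to reduce to the classical fact that $k \coloneqq A/\Mm$ is a finite field, and then exploit the $\Mm$-adic filtration on $A/\Mm^e$. First I would show $k$ is finite. Since $A$ is a finitely generated $\Z$-algebra, so is the quotient $k$. Consider the kernel of the structure map $\Z \to k$; as $k$ is an integral domain the kernel is a prime ideal. It cannot be $(0)$: otherwise $k$ would contain $\Q$, and a field containing $\Q$ that is finitely generated as a $\Z$-algebra is already finitely generated as a $\Q$-algebra (invert the finitely many primes appearing in the denominators of a finite generating set), hence a finite extension of $\Q$ by Zariski's lemma; but no proper finite extension of $\Q$ is finitely generated as a $\Z$-algebra, since the ring of integers in such an extension has infinitely many maximal ideals. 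Therefore the kernel is $(p)$ for some prime $p$, and $k$ is a field that is finitely generated as an $\Fp$-algebra. By Zariski's lemma again, $k$ is a finite extension of $\Fp$, hence a finite set.

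Next I would use the $\Mm$-adic filtration
\[
A/\Mm^e \;\supseteq\; \Mm/\Mm^e \;\supseteq\; \Mm^2/\Mm^e \;\supseteq\; \cdots \;\supseteq\; \Mm^{e-1}/\Mm^e \;\supseteq\; 0.
\]
Since $A$ is noetherian (Hilbert basis theorem applied to some surjection $\Z[x_1,\dots,x_n] \twoheadrightarrow A$), each ideal $\Mm^i$ is finitely generated, so each successive quotient $\Mm^i/\Mm^{i+1}$ is a finitely generated $A$-module on which $\Mm$ acts trivially, i.e., a finite-dimensional vector space over the finite field $k$. Hence each graded piece is finite, and a straightforward induction on $e$ (or multiplying cardinalities of the $\Mm^i/\Mm^{i+1}$) yields $\#(A/\Mm^e) < \infty$. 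The ``in particular'' statement is then automatic, since any finite field has positive characteristic.

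The only non-formal ingredient is Zariski's lemma in its arithmetic form (finitely generated algebras over a field or over $\Z$ have finite residue fields at maximal ideals); once that is granted, the filtration argument is routine. So the main ``obstacle'' is merely citing the right version of the Nullstellensatz cleanly, rather than any genuine technical difficulty.
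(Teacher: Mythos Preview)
Your argument is correct and follows the same two-step structure as the paper: first show $A/\Mm$ is a finite field (the paper does this more tersely, asserting without further justification that the image of $\Z$ in $A/\Mm$ is itself a field and then citing the equality $\dim = \transdeg$ for finitely generated algebras over $\Fp$, whereas you spell out the characteristic-zero contradiction via Zariski's lemma), then filter $A/\Mm^e$ by powers of $\Mm$ and use that each $\Mm^i/\Mm^{i+1}$ is a finite-dimensional vector space over this finite field. One small slip: drop the word ``proper'' in ``no proper finite extension of $\Q$''---the case $k=\Q$ must also be ruled out, and your ring-of-integers argument covers it just as well with $O_\Q=\Z$.
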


\begin{proof}
	Let $\alpha:\Z \to A/\Mm$ be the natural ring homomorphism.
	Then, $\alpha$ induces the injective ring homomorphism
	$\bar{\alpha}:\Z/\Ker\, \alpha \to A/\Mm$.
	Since $A/\Mm$ is a field,
	also $\Z/\Ker\, \alpha$ is a field.
	In other words,
	$\Z/\Ker\, \alpha$ is equal to $\Fp$ for some prime number $p$.
	Since $A/\Mm$ is a finitely generated $\Fp$-algebra
	and $\dim (A/\Mm) = 0$,
	we have 
	$\transdeg_{\Fp} (A/\Mm) = 0$
	by \cite[Theorem~5.6]{Mat}.
	Thus,
	$A/\Mm$ is a finite dimensional $\Fp$-vector space.
	Hence $A/\Mm$ is a finite field.
	
	
	We have an exact sequence 
	\[
	0
	\to \Mm^{e-1}/\Mm^e
	\to A/\Mm^e
	\to A/\Mm^{e-1}
	\to 0
	\]
	for every $e \in \Z_{>0}$.
	Since $\Mm^{e-1}/\Mm^{e}$ is 
	a finite dimensional $A/ \Mm$-vector space,
	we have $\# (\Mm^{e-1}/\Mm^{e}) < \infty$.
	By induction on $e$,
	we may assume that $\# (A/\Mm^{e-1}) < \infty$.
	Thus, 
	we have 
	\[
	\#(A/\Mm^{e}) = \#(\Mm^{e-1}/\Mm^{e})  \#(A/\Mm^{e-1}) < \infty.
	\]
\end{proof}

\begin{lem}\label{lem:App-3}
	Let $A$ be a noetherian integral domain,
	let $\Mm$ be a maximal ideal of $A$, 
	let $M$ be a finitely generated $A$-module, and
	let $r \in \Z_{>0}$.
	If $\#A = \infty$ and $\glen_A\,M \ge 1$,
	then 
	$\# (M \otimes_A A/\Mm^e) > r$ holds
	for sufficiently large $e \in \Z_{>0}$.
\end{lem}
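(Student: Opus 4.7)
The plan is to reduce the statement to an Artin--Rees argument combined with a growth estimate on $\#(A/\Mm^f)$. Since $\glen_A M \ge 1$, Proposition~\ref{GR-4} furnishes an injective $A$-homomorphism $\iota : A \to M$. Setting $N \coloneqq \iota(A) \subseteq M$, so that $N \simeq A$ as $A$-modules, the inclusion $N \hookrightarrow M$ induces an injection $N/(N \cap \Mm^e M) \hookrightarrow M/\Mm^e M \simeq M \otimes_A A/\Mm^e$, so it will suffice to bound $\#(N/(N \cap \Mm^e M))$ from below.

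Next I would apply the Artin--Rees lemma to the submodule $N \subseteq M$ with respect to $\Mm$: because $A$ is noetherian and $M$ is finitely generated, there exists a constant $c \in \Z_{\ge 0}$ such that $\Mm^e M \cap N \subseteq \Mm^{e-c} N$ for every $e \ge c$. This yields a surjection $N/(N \cap \Mm^e M) \twoheadrightarrow N/\Mm^{e-c} N$, whose target is identified with $A/\Mm^{e-c}$ via $N \simeq A$. Thus the problem reduces to showing that $\#(A/\Mm^f) > r$ for all sufficiently large $f \in \Z_{>0}$.

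Finally I would split into two cases. If $\Mm = 0$, then $A$ is a field (since $\Mm$ is maximal), so $A/\Mm^f = A$, which is infinite by the assumption $\#A = \infty$, and the conclusion is immediate. If $\Mm \neq 0$, I would argue by contradiction: if $\#(A/\Mm^f)$ were bounded, the surjective transition maps $A/\Mm^{f+1} \twoheadrightarrow A/\Mm^f$ would be isomorphisms for $f \gg 0$, forcing $\Mm^f = \Mm^{f+1}$; localizing at $\Mm$ and invoking Nakayama's lemma in the noetherian local ring $A_\Mm$ would give $\Mm^f A_\Mm = 0$, so $\Mm A_\Mm$ would be nilpotent in the local domain $A_\Mm$, hence zero; clearing denominators, each $a \in \Mm$ would be annihilated by some $b \in A \setminus \Mm$, and since $A$ is a domain this would force $\Mm = 0$, a contradiction.

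The main technical ingredient is the Artin--Rees lemma; the two-case analysis afterwards is short, with the only subtlety being the use of Nakayama in $A_\Mm$ combined with the domain hypothesis to propagate $\Mm A_\Mm = 0$ back to $\Mm = 0$ in $A$.
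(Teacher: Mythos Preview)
Your argument is correct and takes a genuinely different route from the paper's. The paper passes to the $\Mm$-adic completions: it uses that $A \to \hat{A}$ is injective (Krull intersection, \cite[Theorem~8.10]{Mat}) and flat (\cite[Theorem~8.8]{Mat}) to obtain an injection $A \hookrightarrow \hat{M} \simeq M \otimes_A \hat{A}$, hence $\#\hat{M} = \infty$; then a stabilization argument on the inverse system $(M/\Mm^e M)_e$ gives the contradiction. You instead apply Artin--Rees directly to $N \subseteq M$ to produce a surjection onto $A/\Mm^{e-c}$, reducing the problem to the unboundedness of $\#(A/\Mm^f)$, which you settle by an elementary Nakayama argument in $A_\Mm$. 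Both approaches rest on the same Artin--Rees circle of ideas (Krull intersection and flatness of completion are consequences of Artin--Rees), but yours is more self-contained: it avoids invoking flatness of the completion and the identification $\hat{M} \simeq M \otimes_A \hat{A}$, at the cost of a short case split on $\Mm = 0$ versus $\Mm \neq 0$. The paper's route, by contrast, packages the whole argument into the single statement $\#\hat{M} = \infty$, which is conceptually clean but relies on more black-box input.
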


\begin{proof}
	By the natural surjection 
	$M \otimes_A A/\Mm^{e+1} \to M \otimes_A A/\Mm^e$,
	it is enough to show that
	for any $r \in \Z_{>0}$, 
	there exists a positive integer $e \in \Z_{>0}$
	such that $\# (M \otimes_A A/\Mm^e) > r$.
	By $\glen_A\,M \ge 1$ and Proposition~\ref{GR-4},
	there is an injective $A$-homomorphism $\phi:A \to M$.
	Let $\hat{A}$ (resp. $\hat{M}$) be the completion of $A$ (resp. $M$) with respect to $\Mm$.
	By \cite[Theorem~8.10]{Mat},
	the natural $A$-homomorphism $\alpha: A \to \hat{A}$ is injective.
	Since $\alpha$ is flat by \cite[Theorem~8.8]{Mat},
	$\phi' 
	\coloneqq \phi \otimes_A \id_{\hat{A}}: 
	A \otimes_A \hat{A} \to M \otimes_A \hat{A}$
	is injective.
	By \cite[Theorem~8.7]{Mat},
	the natural $\hat{A}$-homomorphism
	$M \otimes_A \hat{A} \to \hat{M}$
	is an isomorphism.
	Then, we have the injective $A$-homomorphism
	\[
	A 
	\stackrel{\alpha}{\to} \hat{A} 
	\stackrel{\simeq}{\to} A \otimes_A \hat{A}
	\stackrel{\phi'}{\to} M \otimes_A \hat{A}
	\stackrel{\simeq}{\to} \hat{M}.
	\]
	Thus,
	$\# A = \infty$
	implies
	$\# \hat{M} = \infty$.
	Assume that there is a positive integer $r \in \Z_{>0}$
	such that 
	$\# (M / \Mm^e M) = \# (M \otimes_A A/\Mm^e) \le r$
	for every $e \in \Z_{>0}$.
	Then, the natural surjection 
	$M / \Mm^{e+1} M \to M / \Mm^{e} M$
	is an isomorphism for sufficiently large $e \in \Z_{>0}$.
	Thus, for such $e$, 
	$\hat{M} \simeq M / \Mm^{e} M$ holds.
	Hence we have $\# \hat{M} \le r$, which induces a contradiction.
\end{proof}

\begin{lem}\label{lem:App-4}
	Let $f:M \to N$ be an $A$-homomorphism.
	\begin{enumerate}
		\item\label{enu:App-4-1}
			Let $n$ be a positive integer.
			For each $i \in \{1,2,\dots,n\}$,
			let $N_i$ be an $A$-submodule of $N$, and
			let $M_i \coloneqq f^{-1}(N_i)$.
			If $\bigcup_{i=1}^n M_i \neq M$, 
			then $\bigcup_{i=1}^n N_i \neq N$.
		\item\label{enu:App-4-2}
			Let $n$ be a positive integer.
			For each $i \in \{1,2,\dots,n\}$,
			let $M_i$ be an $A$-submodule of $M$, and
			let $N_i \coloneqq f(M_i)$.
			If $f$ is surjective and 
			$\bigcup_{i=1}^n N_i \neq N$,
			then $\bigcup_{i=1}^n M_i \neq M$.
	\end{enumerate}
\end{lem}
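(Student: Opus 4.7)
Both parts are elementary and go by contrapositive, with the only content being the standard behaviour of unions under $f$ and $f^{-1}$; I do not expect to use anything beyond the definitions of image and preimage of a submodule under an $A$-homomorphism.

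For (\ref{enu:App-4-1}), the plan is to prove the contrapositive: assume $\bigcup_{i=1}^n N_i = N$ and deduce $\bigcup_{i=1}^n M_i = M$. Let $m \in M$. Then $f(m) \in N = \bigcup_{i=1}^n N_i$, so $f(m) \in N_i$ for some $i \in \{1,\dots,n\}$, which means $m \in f^{-1}(N_i) = M_i$. Hence $m \in \bigcup_{i=1}^n M_i$, and since $m$ was arbitrary we get $M \subseteq \bigcup_{i=1}^n M_i$; the reverse inclusion is automatic. Note surjectivity of $f$ is irrelevant for this part.

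For (\ref{enu:App-4-2}), again by contrapositive: assume $\bigcup_{i=1}^n M_i = M$. Using the fact that images commute with unions, we compute
\[
N \;=\; f(M) \;=\; f\!\left(\bigcup_{i=1}^n M_i\right) \;=\; \bigcup_{i=1}^n f(M_i) \;=\; \bigcup_{i=1}^n N_i,
\]
where the first equality uses surjectivity of $f$. Hence $\bigcup_{i=1}^n N_i = N$, completing the contrapositive.

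There is no real obstacle; the only point worth flagging is the role of surjectivity in (\ref{enu:App-4-2}): without it one would only obtain $\bigcup_{i=1}^n N_i = f(M) \subseteq N$, which is not enough to contradict $\bigcup_{i=1}^n N_i \neq N$. This is consistent with the hypothesis listed in the statement.
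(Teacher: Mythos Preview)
Your proof is correct and matches the paper's approach: both parts are handled by the same elementary contrapositive argument using the behaviour of unions under $f$ and $f^{-1}$, with surjectivity used exactly where you indicate. The paper's write-up is marginally terser but identical in content.
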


\begin{proof}
	(\ref{enu:App-4-1})
	Let $x \in M$.
	Then, $f(x) \in \bigcup_{i=1}^n N_i$ implies
	$x \in \bigcup_{i=1}^n M_i$.
	Thus, we have that
	if $\bigcup_{i=1}^n M_i \subsetneq M$
	then $\bigcup_{i=1}^n N_i \subsetneq f(M) \subseteq N$.
	
	(\ref{enu:App-4-2})
	Let $x \in M$.	
	Then, $x \in \bigcup_{i=1}^n M_i$ implies
	$f(x) \in \bigcup_{i=1}^n N_i$.
	Thus, we have that
	if $\bigcup_{i=1}^n N_i \subsetneq f(M) = N$
	then $\bigcup_{i=1}^n M_i \subsetneq M$.
\end{proof}

\begin{lem}\label{lem:App-main}
	Let $R$ be a noetherian ring, and
	let $M$ be a finitely generated $R$-module.
	Let $n$ be a positive integer.
	For each $i \in \{1,2,\dots,n\}$,
	let $\Mp_i \in \Spec\,R$,
	let $R_i \coloneqq R / \Mp_i$, and
	let $M_i$ be an $R$-submodule of $M$.
	For each $i \in \{1,2,\dots,n\}$,
	assume the following conditions:  
	\begin{itemize}
		\item
			$\Mp_i (M/M_i) = 0$,
			in particular,
			$M/M_i$ is an $R_i$-module;
		\item
			if $\dim R_i = 0$, $\glen_{R_i} (M/M_i) > \log_2 n$;
		\item
			if $\dim R_i \ge 1$, $\glen_{R_i} (M/M_i) \ge 1$.
	\end{itemize}
	Then, $\bigcup_{i=1}^n M_i \neq M$ holds.
\end{lem}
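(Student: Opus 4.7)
The plan is to follow the introduction's sketch: reduce to the case where $R$ is a finitely generated $\Z$-algebra, pass to a finite quotient $M' \coloneqq M/\Mq M$, and conclude by a counting argument that the images $M'_i$ of the $M_i$'s cannot cover $M'$. The key identity is $\#M'_i = \#M' / \#(M'/M'_i)$, so the estimate $\#(M'/M'_i) > n$ forces $\sum_{i=1}^n \#M'_i < \#M'$, whence $\bigcup_i M'_i \neq M'$. Lemma~\ref{lem:App-4}~(\ref{enu:App-4-2}) applied to the natural surjection $M \to M'$ then yields $\bigcup_i M_i \neq M$.

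For the descent to a finitely generated $\Z$-subalgebra $R_0 \subseteq R$, I would collect a finite list of data: generators of $M$, of each $M_i$, and of each $\Mp_i$, plus, for each $i$, elements of $M$ whose images in $M/M_i$ witness the $\glen$-hypothesis via the injection $R_i^{\oplus r_i} \hookrightarrow M/M_i$ of Proposition~\ref{GR-4} (with $r_i > \log_2 n$ when $\dim R_i = 0$ and $r_i \ge 1$ otherwise). Let $R_0 \subseteq R$ be the $\Z$-subalgebra generated by all the coefficients that arise when these data are expressed in terms of each other; Hilbert's basis theorem makes $R_0$ Noetherian. Let $M^{(0)} \subseteq M$ be the $R_0$-submodule spanned by the chosen generators of $M$, and set $M^{(0)}_i \coloneqq M^{(0)} \cap M_i$ and $\Mp^{(0)}_i \coloneqq \Mp_i \cap R_0$. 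The injection $R_0/\Mp^{(0)}_i \hookrightarrow R_i$ combined with Lemma~\ref{lem:App-1} transports the $\glen$-lower bounds down to $R_0$, and any element of $M^{(0)}$ outside every $M^{(0)}_i$ is outside every $M_i$ in $M$; so it suffices to prove the lemma over $R_0$.

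In the $\Z$-algebra case, for each $i$ pick a maximal ideal $\Mm_i \supseteq \Mp_i$, taking $\Mm_i = \Mp_i$ when $\dim R_i = 0$, and set $\Mq \coloneqq \prod_{i=1}^n \Mm_i^e$ for a sufficiently large $e$. By Lemma~\ref{lem:App-2}, each $R/\Mm^e$ is finite; the Chinese Remainder Theorem applied to the distinct $\Mm_i$'s (whose powers are pairwise coprime) then makes $R/\Mq$ finite, so $M'$ is finite. Since $\Mp_i(M/M_i)=0$ and $\Mq \subseteq \Mm_i^e$, there is a natural surjection $M'/M'_i \twoheadrightarrow (M/M_i)/(\Mm_i/\Mp_i)^e(M/M_i)$. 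When $\dim R_i = 0$, the target is $M/M_i$ itself, an $R_i$-vector space of dimension greater than $\log_2 n$ over the finite field $R_i$ (again by Lemma~\ref{lem:App-2}), so it has more than $n$ elements. When $\dim R_i \ge 1$, $R_i$ is infinite (a finite integral domain is a field), and Lemma~\ref{lem:App-3} applied to $R_i$, $\Mm_i/\Mp_i$, $M/M_i$, and $r=n$ gives the bound $>n$ for $e$ large; choosing one $e$ that works uniformly for all $i$ completes the counting.

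The main obstacle I expect is the descent step. The hypothesis involves a case split on $\dim R_i$, and dimension can rise under descent (for instance $R_i = \Q$ can descend to $R_0/\Mp^{(0)}_i = \Z$), so a case-$\dim R_i = 0$ hypothesis over $R$ may only land in the case-$\dim R_0/\Mp^{(0)}_i \ge 1$ over $R_0$. This is consistent with the proof because the weaker bound $\glen \ge 1$ is preserved by Lemma~\ref{lem:App-1} under the injection $R_0/\Mp^{(0)}_i \hookrightarrow R_i$, but it requires care in the descent to include enough $R_i$-linearly independent representatives in $M^{(0)}$ so that $\glen_{R_0/\Mp^{(0)}_i}(M^{(0)}/M^{(0)}_i) \ge 1$ is actually achieved.
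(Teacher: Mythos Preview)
Your overall strategy matches the paper's, and your treatment of the finitely generated $\Z$-algebra case is correct. The gap is in the descent step, and it is the direction \emph{opposite} to the one you flagged.

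You worry that $\dim R_i = 0$ might become $\dim(R_0/\Mp^{(0)}_i) \ge 1$ after descent, and you correctly note this is harmless since the required bound weakens. But you did not address the reverse: it can happen that $\dim R_i \ge 1$ while $\dim(R_0/\Mp^{(0)}_i) = 0$. Concretely, take $R = \Fp[x]$ with $\Mp_i = (0)$; nothing in your recipe for $R_0$ (generators of $M$, of $M_i$, of $\Mp_i$, plus $\glen$-witnesses) forces any transcendental element into $R_0$, so $R_0$ may well be $\Fp$, and then $R_0/\Mp^{(0)}_i = \Fp$ has dimension $0$. Over $R$ the hypothesis for this $i$ only gives $\glen_{R_i}(M/M_i) \ge 1$, so your witnesses only secure $\glen_{R_0/\Mp^{(0)}_i}(M^{(0)}/M^{(0)}_i) \ge 1$; but the lemma over $R_0$ now demands the stronger bound $\glen > \log_2 n$, which need not hold once $n \ge 2$. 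Thus the hypotheses of the lemma fail over $R_0$ and the induction does not close.

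The paper's reduction is designed to preclude exactly this drop. Rather than a subalgebra, it fixes a homomorphism $g\colon B = \Z[t_1,\dots,t_l] \to R$ chosen so that for each $i$ with $\dim R_i \ge 1$ and $R_i$ of characteristic $p>0$, some $t_j$ maps to an element of $R_i$ transcendental over $\Fp$; it then pulls the problem back along $h\colon B^{\oplus N}\to R^{\oplus N}\to M$ via Lemma~\ref{lem:App-4}(\ref{enu:App-4-1}). This choice of $g$ forces $\dim B_i \ge 1$ whenever $\dim R_i \ge 1$ (Claim~\ref{claim:App-2}), so the case split survives the descent. Your subalgebra approach can be repaired in the same spirit: for each such $i$, adjoin to $R_0$ one element of $R$ whose image in $R_i$ is transcendental over $\Fp$. (In characteristic $0$ no fix is needed: by Lemma~\ref{lem:App-2} a finitely generated $\Z$-algebra that is a domain of characteristic $0$ is never a field, so $\dim(R_0/\Mp^{(0)}_i)\ge 1$ automatically.)
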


\begin{proof}
	First, we show the case 
	where $R$ is a finitely generated $\Z$-algebra.
	\begin{claim}\label{claim:App-main-fgZ}
		If $R$ is a finitely generated $\Z$-algebra,
		the assertion holds.
	\end{claim}
	\begin{proof}[Proof of Claim~\ref{claim:App-main-fgZ}]
	For each $i \in \{1,2,\dots,n\}$,
	pick a maximal ideal $\Mm_i$ of $R$ 
	such that $\Mp_i \subseteq \Mm_i$.
	Let $e$ be a sufficiently large integer.
	Let $\Mq \coloneqq \prod_{i=1}^n \Mm_i ^e$,
	let $M' \coloneqq M/\Mq M$, and
	let $M'_i \coloneqq f(M_i)$, 
	where $f:M \to M'$ be the natural surjection.
	By Lemma~\ref{lem:App-4}~(\ref{enu:App-4-2}),
	it is enough to show that
	$\bigcup_{i=1}^n M'_i \neq M'$.
	
	We now show $\# M' < \infty$.
	After permuting the indices,
	we may assume that 
	there is an integer $n' \in \{1,2,\dots,n\}$
	such that
	$\sequence{\Mm}{,}{n'}$ are pairwise distinct 
	and 
	$\{\sequence{\Mm}{,}{n'}\} = \{\sequence{\Mm}{,}{n}\}$ holds.
	It holds that
	$\Mq = \prod_{j=1}^{n'} \Mm_j ^{e_j}$
	for some $\sequence{e}{,}{n'} \in \Z_{>0}$.
	Given $j, j' \in \{1,2,\dots,n'\}$ with $j \neq j'$,
	$\sqrt{\Mm_j ^{e_j} + \Mm_{j'} ^{e_{j'}}} 
	\supseteq \Mm_j + \Mm_{j'}
	= R$ holds,
	and hence we have 
	$\Mm_j ^{e_j} + \Mm_{j'} ^{e_{j'}} = R$.
	By the Chinese remainder theorem, 
	we have 
	$R/\Mq \simeq 
	\prod_{j=1} ^{n'} R/\Mm_j ^{e_j}$.
	By Lemma~\ref{lem:App-2},
	we have 
	$\# (R/\Mm_j ^{e_j}) < \infty$
	for all $j \in \{1,2,\dots,n'\}$.
	Thus, we also have 
	$\#(R/\Mq) < \infty$.
	Since $M'$ is finitely generated $R/\Mq$-module, 
	we have $\# M' < \infty$.
	
	We now show $\#(M'/M'_i) > n$
	for any $i \in \{1,2,\dots,n\}$.
	Fix $i \in \{1,2,\dots,n\}$.
	If $\dim R_i = 0$,
	then we have 
	\[
	M_i 
	\supseteq \Mp_i M 
	= \Mm_i M
	\supseteq \Mq M,
	\]
	since $\Mp_i = \Mm_i$.
	Thus, we have 
	$M' / M'_i \simeq M/ (\Mq M + M_i) = M/ M_i$.
	Note that 
	$R_i$ is a field,
	$M/M_i$ is a finite dimensional $R_i$-vector space,
	and $\dim_{R_i} (M/M_i) = \glen_{R_i} (M/M_i)$.
	Hence we have
	\[
	\#(M' / M'_i) 
	= \#(M/ M_i) 
	= (\# R_i) ^{\glen_{R_i}\,(M/M_i)}
	> 2^{\log_2 n}
	= n.
	\]
	If $\dim R_i \ge 1$,
	we have a surjection
	\[
	M' / M'_i 
	\stackrel{\simeq}{\to} M/ (\Mq M + M_i)
	\to M/ (\Mm_i^e M + M_i)
	\stackrel{\simeq}{\to} M/M_i \otimes_{R_i} R_i/\bar{\Mm}_i^e,
	\]
	where $\bar{\Mm}_i = \Mm_i/\Mp_i$.
	Thus, we have 
	$ \#(M' / M'_i) \ge \#(M/M_i \otimes_{R_i} R_i/\bar{\Mm}_i^e)$.
	We show $\#R_i =  \infty$.
	If $R_i$ is of characteristic $0$,
	it is clear.
	On the other hand,
	if $R_i$ is of characteristic $p>0$,
	we have $\transdeg_{\Fp}\; K(R_i) = \dim R_i \ge 1$,
	where the first equality follows from \cite[Theorem~5.6]{Mat}.
	Thus there exists a transcendental element in $R_i$ over $\Fp$,
	which implies $\#R_i =\infty$.
	By $\#R_i = \infty$ and Lemma~\ref{lem:App-3},
	we have 
	$\#(M/M_i \otimes_{R_i} R_i/\bar{\Mm}_i^e) > n$.
	Thus, we have 
	$\#(M' / M'_i) > n$.
	
	For each $i \in \{1,2,\dots,n\}$,
	$\#(M' / M'_i) > n$ implies
	\[
	\#M'_i = \frac{\#M'}{\#(M' / M'_i)}  < \frac{\#M'}{n}.
	\]
	Thus, we have
	\[
	\#\left(\bigcup_{i=1}^n M'_i \right) 
	\le \sum_{i=1}^n \#M'_i 
	< n \cdot \frac{\#M'}{n} 
	= \#M' 
	< \infty,
	\]
	which implies
	$\bigcup_{i=1}^n M'_i \neq M'$.
	\end{proof}
	Then, we show the general case.
	Let $g:B \to R$ be 
	a ring homomorphism from a polynomial ring 
	$B \coloneqq \Z[\sequence{t}{,}{l}]$
	such that
	for every $i \in \{1,2,\dots,n\}$,
	the following condition holds:
	\begin{itemize}
		\item
			if $R_i$ is of characteristic $p>0$ and $\dim R_i \ge 1$, 
			then 
			for some $j \in \{1,2,\dots,l\}$,
			the image of $t_j$ under the induced ring homomorphism 
			$B \stackrel{g}{\to} R \to R_i$ is a transcendental element over $\Fp$.
	\end{itemize}
	Let $\alpha:R^{\oplus N} \to M$ be a surjective $R$-homomorphism
	with $N \in \Z_{>0}$,
	and 
	let $h:F \to M$ be the composite $B$-homomorphism
	\[
	F
	\coloneqq B^{\oplus N} 
	\stackrel{g^{\oplus N}}{\longrightarrow} R^{\oplus N}
	\stackrel{\alpha}{\to} M.
	\]
	For each $i \in \{1,2,\dots,n\}$,
	let $F_i \coloneqq h^{-1} (M_i)$, and
	let $B_i \coloneqq B/g^{-1}(\Mp_i)$.
	By Lemma~\ref{lem:App-4}~(\ref{enu:App-4-1}),
	it is enough to show that
	$\bigcup_{i=1}^n F_i \neq F$.
	Fix $i \in \{1,2,\dots,n\}$.
	$g^{-1}(\Mp_i)$ is a prime ideal of $B$.
	Let $\bar{h}_i: F/F_i \to M/M_i$ be the natural injective $B$-homomorphism 
	induced by the $B$-homomorphism $F \stackrel{h}{\to} M \to M/M_i$.
	We have 
	\[
	g^{-1}(\Mp_i) (F/F_i) = \bar{h}_i^{-1}(\Mp_i (M/M_i)) = \bar{h}_i^{-1}(0) = 0.
	\]
	Note that 
	the ring homomorphism $B \stackrel{g}{\to} R \to R_i$ 
	induces the injective ring homomorphism
	$B_i \to R_i$
	and that
	$F/F_i$ is a $B_i$-module.
	Let $\phi': F \times R \to M$ be the $B$-baranced map 
	such that $\phi'(f,r) = rh(f)$.
	By the universal property of the tensor product, 
	we obtain the $B$-homomorphism $\phi:F \otimes_B R \to M$.
	Let $\bar{\phi}'_i : (F/F_i) \times R_i \to M/M_i$ be the $B$-baranced map 
	such that $\bar{\phi}'_i(f,r) = r \bar{h}_i(f)$.
	By the universal property of the tensor product,
	we obtain the $B$-homomorphism $\bar{\phi}_i:(F/F_i) \otimes_B R_i \to M/M_i$.
	Then the following diagram is commutative:
	\[
	\begin{CD}
		F \otimes_B R 					@>\phi>>					M 		\\
		@VVV 																	@VVV 	\\
		(F/F_i) \otimes_B R_i 		@>\bar{\phi}_i>> 	M/M_i,	
	\end{CD}
	\]
	where the vertical arrows are the natural surjective $B$-homomorphisms.
	Since $\phi$ is equal to the composition 
	\[
	F \otimes_B R 
	= B^{\oplus N} \otimes_B R 
	\stackrel{\simeq}{\to} R^{\oplus N}
	\stackrel{\alpha}{\to} M,
	\]
	$\phi$ is surjective.
	Thus, also $\bar{\phi}_i$ is surjective.
	Hence we have 
	\begin{align} \label{ali:App-main-1}
	\glen_{R_i}\, (M/M_i)
	&\le \glen_{R_i} ((F/F_i) \otimes_B R_i) \notag \\
	&= \glen_{R_i} ((F/F_i) \otimes_{B_i} R_i)  \\
	&= \glen_{B_i}\, (F/F_i), \notag
	\end{align}
	where the last equality follows from Lemma~\ref{lem:App-1}.
	In particular, we have $\glen_{B_i}\, (F/F_i) \ge 1$.
	\begin{claim}\label{claim:App-2}
		If $\dim B_i = 0$, then $\dim R_i = 0$.
	\end{claim}
	\begin{proof}[Proof of Claim~\ref{claim:App-2}]
	Assume that $\dim B_i = 0$ and $\dim R_i \ge 1$.
	By Lemma~\ref{lem:App-2},
	$B_i$ is of characteristic $p>0$,
	and hence also $R_i$ is of characteristic $p$.
	We consider the following commutative diagram:
	\[
	\begin{CD}
		@. 			 			B 			@>g>> 		R 			\\
		@. 						@VVV 					@VVV 	\\
		\Fp 		@>>> 		B_i 		@>>> 		R_i. 		
	\end{CD}
	\]
	By the definition of $g$,
	the image of $t_j$ under  
	the ring homomorphism $B \stackrel{g}{\to} R \to R_i$ 
	is a transcendental element over $\Fp$
	for some $j \in \{1,2,\dots,l\}$.
	Since the ring homomorphism $B \stackrel{g}{\to} R \to R_i$ 
	is equal to the ring homomorphism $B \to B_i \to R_i$,
	also the image of $t_j$ under 
	the ring homomorphism $B \to B_i$ 
	is transcendental over $\Fp$.
	Since $B_i$ is a finitely generated $\Fp$-algebra,
	$\dim B_i$ is equal to $\transdeg_{\Fp}\, K(B_i)$
	(cf. \cite[Theorem~5.6]{Mat}).
	Thus, we have $\dim B_i \ge 1$.
	This is a contradiction.
	\end{proof}
	By Claim~\ref{claim:App-2} and inequality~(\ref{ali:App-main-1}),
	we have that
	if $\dim B_i = 0$,
	then 
	\[
	\log_2 n < \glen_{R_i}\, (M/M_i) \le \glen_{B_i}\, (F/F_i)
	\]
	holds.
	By Claim~\ref{claim:App-main-fgZ},
	we have $\bigcup_{i=1}^n F_i \neq F$,
	and hence the assertion holds.
\end{proof}

\begin{thm}\label{thm:App-main}
	Let $f:X \to S$ be a projective morphism of noetherian schemes,
	with $X$ reduced and $S$ affine.
	Let $L$ be an invertible sheaf on $X$, and
	let $A$ be an $f$-ample invertible sheaf on $X$.
	Then, 
	for sufficiently large $m \in \Z_{>0}$, 
	there is a section
	$s \in H^0(X,L^{\otimes m} \otimes_{\MO_X} A^{-1})$
	such that 
	for any irreducible component $V$ of $X$,
	if $L|_V$ is $f|_V$-big, then $\Supp(V) \nsubseteq \Supp(Z(s))$ holds.
\end{thm}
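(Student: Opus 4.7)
The plan is to apply Lemma~\ref{lem:App-main} to the $R$-module $M = H^0(X, L^{\otimes m} \otimes_{\MO_X} A^{-1})$ (writing $S = \Spec R$) with a family of submodules $M_i$ given by the kernels of restriction to the big irreducible components. First I would reduce to the case of $X$ connected by working on each connected component $X^{(\alpha)}$ of $X$ separately (taking the maximum of the resulting lower bounds on $m$): on a zero-dimensional connected component $X^{(\alpha)} = \Spec K'$ the line bundle $L^{\otimes m} \otimes_{\MO_X} A^{-1}$ restricts to a trivial sheaf and any nonzero element of $K'$ works, and on a positive-dimensional connected $X^{(\alpha)}$ every irreducible component automatically has dimension at least $1$, since a zero-dimensional component would be an isolated closed point contradicting either connectedness of $X^{(\alpha)}$ or maximality as an irreducible subset.

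Assume then $X$ is connected with every irreducible component of dimension at least $1$. Enumerate as $V_1, \ldots, V_n$ those irreducible components for which $L|_{V_i}$ is $f|_{V_i}$-big, let $\Mp_i \subseteq R$ be the prime corresponding to the integral closed subscheme $f(V_i) \subseteq S$, and set $R_i = R/\Mp_i$ and $d_i = \dim(V_i/f(V_i))$. Take $M_i$ to be the kernel of the restriction map $\rho_i \colon M \to H^0(V_i, (L^{\otimes m} \otimes_{\MO_X} A^{-1})|_{V_i})$. Since the target is a module over $\Gamma(V_i, \MO_{V_i})$, which is naturally an $R_i$-algebra, we have $\Mp_i (M/M_i) = 0$; an $s \in M \setminus \bigcup_i M_i$ produced by Lemma~\ref{lem:App-main} will satisfy $s|_{V_i} \neq 0$, hence $\Supp(V_i) \nsubseteq \Supp(Z(s))$ by Propositions~\ref{prop:ZL-3} and~\ref{prop:ZL-5}.

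The core lower bound on $\glen_{R_i}(M/M_i)$ proceeds via Mayer--Vietoris. Writing $V'_i$ for the reduced union of the remaining irreducible components and $T_i = V_i \cap V'_i$ for the scheme-theoretic intersection, the exact sequence
\[
0 \to \MO_X \to \MO_{V_i} \oplus \MO_{V'_i} \to \MO_{T_i} \to 0
\]
(valid because $X$ is reduced) tensored with $M_m \coloneqq L^{\otimes m} \otimes_{\MO_X} A^{-1}$ shows that the image of $\rho_i$ contains the kernel of $H^0(V_i, M_m|_{V_i}) \to H^0(T_i, M_m|_{T_i})$. Combining Proposition~\ref{WB-lower bound} applied to $f|_{V_i}$ (using bigness of $L|_{V_i}$) with Lemma~\ref{WB-upper bound} applied to $f|_{T_i}$ (together with $\dim(T_i/f(V_i)) \le d_i - 1$ from Proposition~\ref{RD-<}, as $T_i \subsetneq V_i$ as reduced subschemes) yields $\glen_{R_i}(M/M_i) \ge C_i m^{d_i} - C'_i m^{d_i - 1}$ for sufficiently large $m$, with positive real constants $C_i, C'_i$.

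The two cases of Lemma~\ref{lem:App-main} then follow. If $\dim R_i \ge 1$, we need only $\glen_{R_i}(M/M_i) \ge 1$, which holds for $m$ large since the bound tends to infinity when $d_i \ge 1$ and equals the positive constant $[K(V_i):K(R_i)]$ when $d_i = 0$. If $\dim R_i = 0$, then $R_i$ is a field and $V_i$ is projective over $\Spec R_i$, so $\dim V_i = d_i$; the assumption $\dim V_i \ge 1$ forces $d_i \ge 1$, so the bound grows to infinity and exceeds $\log_2 n$ for $m$ large. Lemma~\ref{lem:App-main} then produces the desired $s$. The main obstacle is precisely the configuration $\dim R_i = 0 = d_i$ (i.e., $\dim V_i = 0$), in which the bound collapses to the fixed constant $[K(V_i):K(R_i)]$ that may fall below $\log_2 n$; the initial reduction to connected positive-dimensional $X$ eliminates exactly this case.
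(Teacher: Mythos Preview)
Your proof is correct and follows essentially the same route as the paper's: reduce to connected $X$, set up the kernels $M_i = \Ker(\rho_i)$ as $R$-submodules of $H^0(X,L^{\otimes m}\otimes A^{-1})$, bound $\glen_{R_i}(M/M_i)$ from below via the Mayer--Vietoris comparison with $T_i$ using Proposition~\ref{WB-lower bound}, Lemma~\ref{WB-upper bound}, and Proposition~\ref{RD-<}, and then invoke Lemma~\ref{lem:App-main}. The only cosmetic difference is that you dispose of zero-dimensional components up front (observing that a zero-dimensional irreducible component of a connected noetherian scheme forces the whole scheme to be a point), whereas the paper handles the case $\dim X_i = 0$ inside the main loop by the same observation; the paper also indexes all irreducible components and restricts to the ``big'' subset $I$, while you enumerate only the big ones from the start.
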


\begin{proof}
	If $X = \emptyset$, the assertion clearly holds.
	Thus we may assume that $X \neq \emptyset$.
	Let $\sequence{Y}{,}{l}$ be all the connected components of $X$
	equipped with the reduced scheme structure.
	Then, by the Mayer--Vietoris sequence,
	we have 
	\[
	H^0(X,L^{\otimes m} \otimes_{\MO_X} A^{-1})
	\simeq \bigoplus_{i=1}^l 
	 			H^0(Y_i,(L|_{Y_i})^{\otimes m} \otimes_{\MO_{Y_i}} (A|_{Y_i})^{-1})
	\]
	for any $m \in \Z_{>0}$.
	Thus, it is enough to show that
	the assertion holds for any connected component of $X$.
	Hence we may assume that $X$ is connected.
	
	Let $\sequence{X}{,}{n}$ be all the irreducible components of $X$.
	Let 
	\[
	I \coloneqq \{i \in \{1,2,\dots,n\} \mid \text{$L|_{X_i}$ is $f|_{X_i}$-big} \}.
	\]
	For each $i \in I$,
	let $X'_i \coloneqq \bigcup_{j \neq i} X_j$ equipped with the reduced scheme structure, 
	let $T_i$ be the scheme-theoretic intersection of $X_i$ and $X'_i$,
	and let $d_i \coloneqq \dim(X_i/f(X_i))$.
	Note that $d_i \ge 0$ for each $i \in I$.
	We write 
	$S = \Spec\,R$ with a noetherian ring $R$.
	For each $i \in I$,
	we write 
	$f(X_i) = \Spec\,R_i$ with a noetherian integral domain $R_i$,
	and 
	$R_i = R/\Mp_i$ with $\Mp_i \in \Spec\,R$.
	Let $H_m \coloneqq L^{\otimes m} \otimes_{\MO_X} A^{-1}$
	for $m \in \Z_{>0}$.
	Fix $i \in I$.
	%
	By Proposition~\ref{WB-lower bound},
	there is a positive real number $C'_i \in \R _{>0}$
	such that
	for sufficiently large $m>0$, 
	\[
	 \glen_{R_i}\, H^0({X_i},H_m|_{X_i}) \geq C'_i m^{d_i}.
	\]
	holds.
	By Proposition~\ref{RD-<},
	we have $\dim(T_i/f(X_i)) \le d_i-1$.
	By Lemma~\ref{WB-upper bound}, 
	there is a positive real number $C''_i \in \R _{>0}$ 
	such that 
	for sufficiently large $m>0$,
	\[
	 \glen_{R_i}\, H^0(T_i,H_m|_{T_i}) 
	 \leq C''_i m^{\dim(T_i/f(X_i))} 
	 \leq C''_i m^{d_i-1}
	\]
	holds.
	By the Mayer--Vietoris sequence,
	we have the exact sequence
	\[
	0 
	\to H^0(X,H_m) 
	\to H^0({X_i},H_m|_{X_i}) \oplus H^0({X'_i},H_m|_{X'_i}) 
	\to H^0({T_i},H_m|_{T_i}). 
	\]
	Let $\phi_{i,m}:H^0(X,H_m) \to H^0(X_i,H_m|_{X_i})$
	and $\psi_{i,m}:H^0(X_i,H_m|_{X_i}) \to H^0(T_i,H_m|_{T_i})$
	be the natural homomorphisms.
	By the above exact sequence, 
	we have $\Im\,\phi_{i,m} \supseteq \Ker\,\psi_{i,m}$.
	Hence for sufficiently large $m \in \Z_{>0}$, 
	we have
	\begin{align*}
				&\glen_{R_i}(H^0(X,H_m)/\Ker\,\phi_{i,m}) \\
	= 			&\glen_{R_i}(\Im\,\phi_{i,m}) \\
	\ge 		&\glen_{R_i}(\Ker\,\psi_{i,m}) \\
	\ge 		&\glen_{R_i}\,H^0({X_i},H_m|_{X_i}) - \glen_{R_i}\,H^0({T_i},H_m|_{T_i}) \\
	\ge 		&C'_i m^{d_i} - C''_i m^{d_i-1} \\
	\ge 		&C_i m^{d_i},
	\end{align*}
	where $C_i \coloneqq C'_i / 2$.
	Since $d_i \ge 0$, 
	$\glen_{R_i}(H^0(X,H_m)/\Ker\,\phi_{i,m}) \ge 1$ holds for sufficiently large $m \in \Z_{>0}$.
	If $\dim X_i = 0$,
	then we have $X = X_i$
	since $X$ is connected and $X_i$ is an irreducible component of $X$.
	Hence, the assertion clearly holds
	if $\dim X_i = 0$.
	Thus, we may assume that 
	$\dim X_i \ge 1$.
	If $\dim R_i = 0$,
	we have $\dim(X_i / f(X_i)) = \dim X_i$ 
	by the definition of $\dim(X_i / f(X_i))$,
	and hence we have $d_i = \dim(X_i / f(X_i)) \ge 1$.
	Then, for sufficiently large $m \in \Z_{>0}$,
	we have
	\[
	\glen_{R_i}(H^0(X,H_m)/\Ker\,\phi_{i,m})
	\ge C_i m^{d_i}
	> \log_2 n.
	\]
	
	By Lemma~\ref{lem:App-main},
	we have
	$\bigcup_{i \in I} \Ker\,\phi_{i,m} \neq H^0(X,H_m)$
	for sufficiently large $m \in \Z_{>0}$.
	In other words,
	for sufficiently large $m \in \Z_{>0}$,
	there is a section $s \in H^0(X,L^{\otimes m} \otimes_{\MO_X} A^{-1})$
	such that
	$s|_{X_i} \neq 0$ holds
	for any $i \in I$.
	By Proposition~\ref{prop:ZL-3} and Proposition~\ref{prop:ZL-5},
	the assertion holds.
\end{proof}

\begin{cor}\label{App-Kodaira's lemma}
	Let $f:X \to S$ be a projective morphism of noetherian schemes,
	with $X$ reduced and $S$ affine.
	Let $L$ be an invertible sheaf on $X$, and
	let $A$ be an $f$-ample invertible sheaf on $X$.
	If $L|_V$ is $f|_V$-big
	for any irreducible component $V$ of $X$,
	then for sufficiently large $m \in \Z_{>0}$,
	there is an effective Cartier divisor $D$ on $X$ 
	such that $L^{\otimes m} \simeq A \otimes_{\MO_X} \MO_X(D)$.
\end{cor}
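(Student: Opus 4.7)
\medskip

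The plan is to derive this from Theorem~\ref{thm:App-main} almost directly, the only substantive step being the verification that the resulting section defines an effective Cartier divisor via Definition-Proposition~\ref{prop:ZL-effective divisor}. First, I would apply Theorem~\ref{thm:App-main} to produce, for sufficiently large $m \in \Z_{>0}$, a section $s \in H^0(X, L^{\otimes m} \otimes_{\MO_X} A^{-1})$ with the property that $\Supp(V) \nsubseteq \Supp(Z(s))$ for every irreducible component $V$ on which $L|_V$ is $f|_V$-big. Since by hypothesis \emph{every} irreducible component $V$ of $X$ satisfies the bigness condition, we obtain $\Supp(V) \nsubseteq \Supp(Z(s))$ for all irreducible components $V$ simultaneously.

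Next I would argue that $Z(s)$ defines an effective Cartier divisor, by verifying condition~(7) of Definition-Proposition~\ref{prop:ZL-effective divisor}: namely, that $\overline{\{x\}} \nsubseteq \Supp(Z(s))$ for every $x \in \Ass\, X$. Since $X$ is reduced and noetherian, the associated points of $X$ coincide with the generic points of its irreducible components: every such generic point is associated by Proposition~\ref{prop:AP-2}(2), while conversely, on a reduced noetherian scheme there are no embedded associated points (locally, in a reduced noetherian ring the associated primes of the ring itself are exactly the minimal primes). Thus, for $x \in \Ass\, X$, $\overline{\{x\}}$ is an irreducible component of $X$ with its reduced induced structure, and the inclusion $\overline{\{x\}} \nsubseteq \Supp(Z(s))$ follows from the conclusion of the previous paragraph.

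Finally, set $D \coloneqq Z(s)$, which is an effective Cartier divisor by the above. From Definition~\ref{def:ZL-1} together with Definition-Proposition~\ref{prop:ZL-effective divisor}~(1), applied to the invertible sheaf $L^{\otimes m} \otimes_{\MO_X} A^{-1}$ in place of $L$, the ideal sheaf $\MI$ of $D$ satisfies $\MI \simeq L^{-m} \otimes_{\MO_X} A$. Dualizing, $\MO_X(D) = \MI^{-1} \simeq L^{\otimes m} \otimes_{\MO_X} A^{-1}$, so $L^{\otimes m} \simeq A \otimes_{\MO_X} \MO_X(D)$, as required.

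The main obstacle is essentially inexistent in view of Theorem~\ref{thm:App-main}: the only conceptual point is the identification of $\Ass\, X$ with the set of generic points of irreducible components in the reduced case, which is standard. Everything else is a matter of invoking the previously developed formalism for zero loci of sections of invertible sheaves.
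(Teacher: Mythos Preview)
Your proposal is correct and follows essentially the same route as the paper's own proof: apply Theorem~\ref{thm:App-main}, invoke Definition-Proposition~\ref{prop:ZL-effective divisor} to conclude that $Z(s)$ is an effective Cartier divisor, and read off the isomorphism. The only difference is that you spell out explicitly why, on a reduced noetherian scheme, $\Ass\,X$ consists precisely of the generic points of the irreducible components, a point the paper leaves implicit when citing Definition-Proposition~\ref{prop:ZL-effective divisor}.
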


\begin{proof}
	By Theorem~\ref{thm:App-main},
	for sufficiently large $m \in \Z_{>0}$,
	there is a section $s \in H^0(X,L^{\otimes m} \otimes_{\MO_X} A^{-1})$
	such that 
	$\Supp(V) \nsubseteq \Supp(Z(s))$ holds 
	for any irreducible component $V$ of $X$.
	By Definition-Proposition~\ref{prop:ZL-effective divisor},
	$Z(s)$ defines an effective Cartier divisor $D$ on $X$.
	Thus, we have 
	$L^{\otimes m} \otimes_{\MO_X} A^{-1} \simeq \MO_X(D)$,
	i.e.,
	$L^{\otimes m}  \simeq  A \otimes_{\MO_X} \MO_X(D)$.
\end{proof}

\newcommand{\etalchar}[1]{$^{#1}$}


\end{document}